\numberwithin{equation}{section}
\let\OLDthebibliography\thebibliography
\renewcommand\thebibliography[1]{
  \OLDthebibliography{#1}
  \setlength{\parskip}{0pt}
  \setlength{\itemsep}{2pt plus 0.5ex}
}
\def\@cite#1#2{{\m@th\upshape\bfseries%
[{#1\if@tempswa{\m@th\upshape\mdseries, #2}\fi}]}}
\theoremstyle{plain}
\newtheorem{theorem}{Theorem}[section]
\newtheorem{corollary}[theorem]{Corollary}
\newtheorem{proposition}[theorem]{Proposition}
\newtheorem{lemma}[theorem]{Lemma}
\theoremstyle{definition}
\newtheorem{definition}[theorem]{Definition}
\newtheorem{example}[theorem]{Example}
\newtheorem{remark}[theorem]{Remark}
\newtheorem{question}[theorem]{Question}
\theoremstyle{remark}
\newcommand{\bbC}{{\mathbb{C}}}
\newcommand{\bbD}{{\mathbb{D}}}
\newcommand{\bbT}{{\mathbb{T}}}
\newcommand{\A}{{\mathcal{A}}}
\newcommand{\B}{{\mathcal{B}}}
\newcommand{\C}{{\mathcal{C}}}
\newcommand{\D}{{\mathcal{D}}}
\newcommand{\I}{{\mathcal{I}}}
\newcommand{\J}{{\mathcal{J}}}
\newcommand{\K}{{\mathcal{K}}}
\newcommand{\M}{{\mathcal{M}}}
\newcommand{\T}{{\mathcal{T}}}
\newcommand{\fA}{{\mathcal{A}}}
\newcommand{\fB}{{\mathcal{B}}}
\newcommand{\fC}{{\mathcal{C}}}
\newcommand{\fD}{{\mathcal{D}}}
\renewcommand{\phi}{\varphi}
\newcommand{\upchi}{{\raise.35ex\hbox{\ensuremath{\chi}}}}
\newcommand{\Aut}{\operatorname{Aut}}
\newcommand{\id}{{\operatorname{id}}}
\newcommand{\spn}{\operatorname{span}}
\newcommand\Span{\mathop{\rm span}}
\newcommand\cspan{\overline{\Span}}
\newcommand\ad{\mathop{\rm ad}}
\newcommand{\cstarlattice}{\text{C$^*$-Lat}}
\newcommand{\Cmax}{C^\ast_\text{max}}
\begin{document}
\title{The lattice of C$^*$-covers of an operator algebra}

\author{Adam~Humeniuk}
\address {Department of Mathematics and Computing
\\Mount Royal University \\ Calgary, AB \\Canada}
\email{ahumeniuk@mtroyal.ca}
\author{Christopher~Ramsey}
\address {Department of Mathematics and Statistics
\\MacEwan University \\ Edmonton, AB \\Canada}
\email{ramseyc5@macewan.ca}

\begin{abstract}
In this paper it is shown that the lattice of C$^*$-covers of an operator algebra does not contain enough information to distinguish operator algebras up to completely isometric isomorphism. In addition, four natural equivalences of the lattice of C$^*$-covers are developed and proven to be distinct. The lattice of C$^*$-covers of direct sums and tensor products are studied. Along the way key examples are found of operator algebras, each of which generates exactly $n$ C$^*$-algebras up to $*$-isomorphism, and a simple operator algebra that is not similar to a C$^*$-algebra. 
\end{abstract}

\thanks{2020 {\it  Mathematics Subject Classification.}
47L55, 
46L05,  
46L06 
}
\thanks{{\it Key words and phrases:} C*-cover, operator algebra, non-selfadjoint, lattice equivalences}

\maketitle

\section{Introduction}
How much information is lost by passing to the C$^*$-algebra of a completely isometric representation of a non-selfadjoint operator algebra? Plenty, as many non-isomorphic operator algebras generate the same C$^*$-algebra. What about considering the collection of C$^*$-algebras generated by all completely isometric representations of an operator algebra? 
In other words, what does it mean for two operator algebras to generate the ``same'' C$^*$-algebras? 

A C$^*$-algebra generated by a completely isometric representation of an operator algebra is called a C$^*$-cover. Proving the existence of the C$^*$-envelope, the minimal C$^*$-cover, and understanding its structure, in general and in many, many particular algebras, has been a central focus of non-selfadjoint operator algebras. The existence of the C$^*$-envelope was conjectured, and proven in many cases, by Arveson in 1969 \cite{Arv1}, and proven in general by Hamana \cite{Hamana}. On the other hand, the maximal C$^*$-cover simply exists by a standard universality argument over all completely contractive representations and was first studied in detail by Blecher in 1999 \cite{Blech}.

Of course, many C$^*$-covers have been studied in the case when they are generated by a ``natural'' representation of the operator algebra. For instance, $C(\overline{\mathbb D})$ and the Toeplitz algebra are generated by representations of $A(\mathbb D)$, or more generally the Toeplitz-Cuntz-Pimsner algebra is a C$^*$-cover of a tensor algebra of a C$^*$-correspondence, which plays an important role in the theory \cite{MS1}. This is not in the least surprising since the most tractable operator algebras are those with nice concrete representations. 

C$^*$-covers and their morphisms were first outlined in Blecher and Le Merdy's book \cite[Chapter 2]{Blecher} and they consider that Paul Muhly came up with the name. In \cite{KatRamMem} the second author and Katsoulis used the structure of C$^*$-covers to develop the theory of non-selfadjoint crossed products, and in \cite{Humen} the first author studied C$^*$-covers of semicrossed products, building on earlier results of Davidson, Fuller, and Kakariadis \cite{DFK}. The complete lattice structure was proven by Hamidi \cite{Hamidi} and Thompson \cite{Thompson} and the latter associates this lattice to the spectrum of the maximal C$^*$-cover. It was shown in \cite{KatRamHN} and \cite{Hamidi} that not every C$^*$-cover does a good job of encoding the structure of the operator algebra as there are completely isometric automorphisms of the operator algebra which fail to extend to $*$-automorphisms of some of the C$^*$-covers. It should be noted that the C$^*$-envelope and the maximal C$^*$-cover never fail this extension by way of their universal properties.

So when are the C$^*$-covers of two operator algebras the ``same''? In Section 2 (Definition \ref{def:sameness}) we develop four possible concepts of ``same-ness": \textit{lattice intertwined, lattice $*$-isomorphic, lattice isomorphic, and C$^*$-cover equivalent}. All four equivalences are shown to be distinct from each other and from completely isometric isomorphism by Theorem \ref{thm:allrelationsdifferent}, implying that the lattice of C$^*$-covers does not necessarily give enough information to tell apart two non-completely-isometrically-isomorphic operator algebras.

On the way to the main goal of the paper, we provide a significant amount of theory around these equivalences with many examples. In particular, the lattice of C$^*$-covers of a direct sum where at least one operator algebra is approximately unital is the product of the lattices, Theorem \ref{thm:direct_sum_covers}. Studying tensor products and C$^*$-covers is known to be enigmatic \cite[Section 6.2]{Blecher} but we can describe the lattice of C$^*$-covers of tensor product of an operator algebra with a simple, nuclear C$^*$-algebra, Theorem \ref{thm:max_tensor_lattice}, along with other partial results. 

In Section 3 it is asked whether there can be an operator algebra whose C$^*$-envelope is also its maximal C$^*$-cover, akin to the operator system found by Kirchberg and Wassermann \cite{KirWas} that enjoys this property. Negative results in some cases are developed. More interesting is the fact that for every integer $n$ there is an operator algebra that generates exactly $n$ C$^*$-algebras up to $*$-isomorphism, Theorem \ref{thm:ncstarcovers}, but do note that this is not up to C$^*$-cover isomorphism so the lattice of C$^*$-covers need not be finite.

Finally, Section 4 discusses several concepts - RFD, action admissibility, and simplicity - and their relationship to the lattice of C$^*$-covers and its equivalences. Of note is Theorem \ref{thm:simple_not_cb_iso_to_C*}, that there is a simple operator algebra that is not completely boundedly isomorphic to a C$^*$-algebra.

\section{Equivalences of the lattice of C*-covers}

First, we need to recall the complete lattice structure of C$^*$-covers developed in \cite[Chapter 2]{Blecher}, \cite[Section 2.1]{Hamidi} and \cite{Thompson}. For more in-depth background on C*-envelopes and boundary ideals, we recommend \cite{Arv1} and \cite[Chapter 15]{Paulsen}.

\begin{definition}
Let $\A$ be an operator algebra. A {\bf C$^*$-cover} of $\A$ is a pair $(\fC, \iota)$ consisting of a C$^*$-algebra $\fC$ and a completely isometric linear map $\iota : \A \rightarrow \fC$ such that $C^*(\iota(\A)) = \fC$.
\end{definition}

\begin{definition}
If $\A$ is an operator algebra, and $(\fB,\iota)$, $(\fC,\eta)$ are C*-covers of $\A$, then a \textbf{morphism of C*-covers} $\pi:(\fB,\iota)\to (\fC,\eta)$ is a $\ast$-homomorphism $\pi:\fB\to \fC$ which satisfies $\pi\iota=\eta$. If there exists a morphism of C$^*$-covers $(\fB,\iota)\to (\fC,\eta)$, then we write $(\fC,\eta)\preceq (\fB,\iota)$.
\end{definition}

Because the C*-covers satisfy $\fB=C^\ast(\iota(\A))$ and $\fC=C^\ast(\eta(\A))$, it is automatic that if a morphism $\pi:(\fB,\iota)\to (\fC,\eta)$ exists, then the $\ast$-homomorphism $\pi:\fB\to \fC$ is surjective and unique with this property. Consequently, if $(\fB,\iota)\preceq (\fC,\eta)$ and $(\fB,\iota)\succeq (\fC,\eta)$, then the unique morphism $\pi:(\fB,\iota)\to (\fC,\eta)$ consists of a $\ast$-isomorphism $\pi:\fB\to \fC$ with $\pi\iota=\eta$. In this case, we write $(\fB,\iota)\sim (\fC,\eta)$, and $\sim$ defines an equivalence relation on the class of C*-covers of $\A$.  It should be noted that the non-unital case poses no problems here thanks to the work of Meyer \cite{Meyer}, that every completely contractive (isometric) homomorphism of a non-unital operator algebra extends uniquely to a completely contractive (isometric) homomorphism on the unitization.

\begin{definition}
Let $\A$ be an operator algebra. We let $\cstarlattice(\A)$ denote the collection of equivalence classes $[\fB,\iota]$ of C*-covers $(\fB,\iota)$ of $\A$.
\end{definition}

Do note that in \cite{Thompson} $\cstarlattice(\A)$ is called $\operatorname{Cov}(\A)$ but we feel that our naming convention is more natural. Note as well that we will usually drop the ``complete'' in favour of just saying the lattice of C$^*$-covers.

For any operator algebra $\A$, the collection $\cstarlattice(\A)$ is actually a set, and not just a proper class. To see this, one can identify the lattice of C*-covers with the set of boundary ideals for $\A$ in $\Cmax(\A)$ in an order reversing fashion. If $\mu:\A\to \Cmax(\A)$ is the embedding of $\A$ into its maximal C*-algebra, then $[\Cmax(\A),\mu]$ is the maximum element in $\cstarlattice(\A)$. Likewise, if $\epsilon:\A\to C^*_e(\A)$ is the embedding of $\A$ into its C*-envelope, then $[C^*_e(\A),\epsilon]$ is the minimum element of $\cstarlattice(\A)$.

\begin{proposition}[{\cite[Theorem 2.1.6 and Theorem 2.1.11]{Hamidi}} and {\cite[Section 3]{Thompson}}]\label{prop:cstarlattice}
If $\A$ is an operator algebra, then the ordering $\preceq$ on $\cstarlattice(\A)$ makes $\cstarlattice(\A)$ into a complete lattice. Given an arbitrary family $(\fB_\lambda,\iota_\lambda)$ of C*-covers for $\A$, their join is
\begin{align*}
\bigvee_\lambda [\fB_\lambda,\iota_\lambda] &=
\left[C^\ast\left(\left(\bigoplus_\lambda \iota_\lambda\right)(\A)\right),\bigoplus_\lambda \iota_\lambda\right].
\end{align*}
To describe their meet, it is easiest to use the maximal C*-algebra $(\Cmax(\A),\mu)$. Let $\pi_\lambda:(\Cmax(\A),\mu)\to (\fB_\lambda,\iota_\lambda)$ be morphisms of C*-covers, and set $J_\lambda=\ker(\pi_\lambda)\triangleleft \Cmax(\A)$. Then, let
\[
J=
\bigvee_\lambda J_\lambda = \overline{\sum_\lambda J_\lambda}\triangleleft \Cmax(\A),
\]
and let $q:\Cmax(\A)\to \Cmax(\A)/J$ be the quotient map. Then
\[
\bigwedge_\lambda [\fB_\lambda,\iota_\lambda] = 
[\Cmax(\A)/J,\ q\mu].
\]
\end{proposition}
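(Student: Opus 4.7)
The plan is to verify the join and meet formulas separately, since the existence of arbitrary joins and meets will make $\cstarlattice(\A)$ a complete lattice. The partial order and the fact that $\sim$ makes $\cstarlattice(\A)$ a set have already been handled in the preceding discussion, so only the two formulas need proof.

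For the join, I would first check that $\eta := \bigoplus_\lambda \iota_\lambda : \A \to \prod_\lambda \fB_\lambda$ is completely isometric: since each $\iota_\lambda$ is completely isometric, any matrix norm in $\prod_\lambda \fB_\lambda$ is computed as a coordinatewise supremum, which equals the corresponding matrix norm on $\A$. Thus $(\fC, \eta) := (C^*(\eta(\A)), \eta)$ is a genuine C*-cover. Each coordinate projection $p_\lambda : \prod_\mu \fB_\mu \to \fB_\lambda$ restricts to a surjective $*$-homomorphism $\fC \to \fB_\lambda$ satisfying $p_\lambda \eta = \iota_\lambda$, so it is a morphism of C*-covers showing $(\fB_\lambda, \iota_\lambda) \preceq (\fC, \eta)$. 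For the universal property, given a C*-cover $(\fD, \rho)$ with morphisms $\sigma_\lambda : \fD \to \fB_\lambda$ for all $\lambda$, the $*$-homomorphism $\sigma := \bigoplus_\lambda \sigma_\lambda : \fD \to \prod_\lambda \fB_\lambda$ satisfies $\sigma \rho = \eta$, and since $\fD = C^*(\rho(\A))$, its image equals $C^*(\eta(\A)) = \fC$. Thus $\sigma$ is the required morphism $(\fD, \rho) \to (\fC, \eta)$.

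For the meet, the main obstacle is showing that $J = \overline{\sum_\lambda J_\lambda}$ is a boundary ideal, which is exactly what makes $q\mu$ completely isometric. I would argue that the family of boundary ideals of $\mu(\A)$ in $\Cmax(\A)$ is downward-closed: if $I \subseteq I'$ and $I'$ is a boundary ideal, then the composition of quotient maps $\Cmax(\A) \to \Cmax(\A)/I \to \Cmax(\A)/I'$ is completely isometric on $\mu(\A)$, which forces the first arrow to already be completely isometric on $\mu(\A)$ (completely contractive composed with completely contractive and equal to a complete isometry). By Hamana's theorem, the C*-envelope arises as $\Cmax(\A)/S$ for a unique maximum boundary ideal $S$ (the Shilov boundary ideal). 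Each $J_\lambda \subseteq S$, so $\sum_\lambda J_\lambda \subseteq S$ and $J = \overline{\sum_\lambda J_\lambda} \subseteq S$ (using that $S$ is closed), making $J$ a boundary ideal and $(\Cmax(\A)/J, q\mu)$ a C*-cover.

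The rest proceeds cleanly. Since $J \supseteq J_\lambda$, the map $q$ factors through $\pi_\lambda$, yielding a $*$-homomorphism $\fB_\lambda \cong \Cmax(\A)/J_\lambda \to \Cmax(\A)/J$ intertwining $\iota_\lambda$ with $q\mu$, so $(\Cmax(\A)/J, q\mu) \preceq (\fB_\lambda, \iota_\lambda)$ for every $\lambda$. For the universal property, suppose $(\fD, \rho) \preceq (\fB_\lambda, \iota_\lambda)$ via morphisms $\tau_\lambda : \fB_\lambda \to \fD$. By uniqueness of morphisms out of $\Cmax(\A)$, every composition $\tau_\lambda \pi_\lambda$ coincides with the universal morphism $\pi : (\Cmax(\A), \mu) \to (\fD, \rho)$, so $\ker(\pi) \supseteq J_\lambda$ for every $\lambda$ and hence $\ker(\pi) \supseteq J$. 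Thus $\pi$ descends to a surjective $*$-homomorphism $\bar{\pi} : \Cmax(\A)/J \to \fD$ with $\bar{\pi}(q\mu) = \rho$, establishing $(\fD, \rho) \preceq (\Cmax(\A)/J, q\mu)$ and completing the proof that this is the meet.
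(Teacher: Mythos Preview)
Your proof is correct. Note, however, that the paper does not actually prove this proposition: it is stated as background with citations to \cite{Hamidi} and \cite{Thompson}, so there is no in-paper argument to compare against. Your approach---direct sum for the join, and passing to the Shilov boundary ideal in $\Cmax(\A)$ to control the closed sum of kernels for the meet---is the standard one and matches what those references do. One small remark: your invocation of downward closure of boundary ideals is correct but slightly redundant once you have the Shilov ideal $S$, since containment $J\subseteq S$ already forces the quotient $\Cmax(\A)/J\to \Cmax(\A)/S\cong C^*_e(\A)$ to be completely isometric on $\mu(\A)$ by exactly the same factoring argument; either way the point is the same.
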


Alternatively, one can avoid enlarging all the way to the maximal C*-algebra when defining the meet in Proposition \ref{prop:cstarlattice}. Given a family $(\fB_\lambda,\iota_\lambda)$ of C*-covers, one can also take the quotient of the C*-algebra $\bigvee_\lambda \fB_\lambda = C^\ast((\oplus_\lambda \iota_\lambda)(\A))$ by the ideal
\[
\overline{\sum_\lambda \ker\left(\bigvee_\mu [\fB_\mu,\iota_\mu]\to [\fB_\lambda,\iota_\lambda]\right)}
\]
generated by the kernel of the C*-cover morphisms from the join to each C*-cover.

For downward directed collections, we can describe the meet without reference to the maximal C*-algebra.

\begin{proposition}\label{prop:meet_colimit}
Let $\A$ be an operator algebra. Let $((\fB_\lambda,\iota_\lambda))_{\lambda\in \Lambda}$ be a subset of $\cstarlattice(\A)$ indexed via a downward directed set $\Lambda$, that is, $[\fB_\lambda,\iota_\lambda]\succeq [\fB_\mu,\iota_\mu]$ if and only if $\lambda\le \mu$. Then,
\[
\bigwedge_\lambda [\fB_\lambda,\iota_\lambda] =
\left[\varinjlim_\lambda \fB_\lambda,\iota\right],
\]
where the direct limit is taken along the unique morphisms
\[
\pi_{\lambda,\mu}:(\fB_\lambda,\iota_\lambda)\to (\fB_\mu,\iota_\mu),
\]
for $\lambda\le \mu$, and if $\fB=\varinjlim_{\lambda}\fB_\lambda$, and $\eta_\lambda:\fB_\lambda\to \fB$ are the universal maps to the direct limit, then $\iota=\eta_\lambda\iota_\lambda$ for all $\lambda\in \Lambda$.
\end{proposition}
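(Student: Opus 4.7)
The plan is to verify three things in sequence: that $(\fB_\lambda,\pi_{\lambda,\mu})$ assembles into a genuine direct system of C*-algebras; that $(\varinjlim_\lambda \fB_\lambda,\iota)$ is a C*-cover of $\A$; and that it realizes the greatest lower bound of $\{[\fB_\lambda,\iota_\lambda]\}$ in $\cstarlattice(\A)$.

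For the first step, I would invoke uniqueness of morphisms of C*-covers (from the discussion right after the definition of morphism) to check the compatibilities $\pi_{\mu,\nu}\pi_{\lambda,\mu}=\pi_{\lambda,\nu}$ and $\pi_{\lambda,\lambda}=\id$: both sides of each equation are $\ast$-homomorphisms $\fB_\lambda\to\fB_\nu$ intertwining the relevant $\iota$'s, so they coincide. The downward directedness of $\Lambda$ then makes this a genuine inductive system, and its direct limit $\fB=\varinjlim_\lambda \fB_\lambda$ in the category of C*-algebras carries universal $\ast$-homomorphisms $\eta_\lambda:\fB_\lambda\to\fB$ satisfying $\eta_\mu\pi_{\lambda,\mu}=\eta_\lambda$. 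The relation $\pi_{\lambda,\mu}\iota_\lambda=\iota_\mu$ then forces $\iota:=\eta_\lambda\iota_\lambda$ to be well-defined independently of $\lambda$.

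The main technical point is to show that $\iota$ is completely isometric, for which I plan to use the standard norm formula in a C*-algebraic direct limit: for $x\in \fB_\lambda$,
\[
\|\eta_\lambda(x)\|=\lim_{\mu\ge \lambda}\|\pi_{\lambda,\mu}(x)\|,
\]
where the net on the right is decreasing because $\ast$-homomorphisms are contractive. Applied at the matrix level to $x=\iota_\lambda^{(n)}(a)$ for $a\in M_n(\A)$, the compatibility gives $\|\pi_{\lambda,\mu}^{(n)}(\iota_\lambda^{(n)}(a))\|=\|\iota_\mu^{(n)}(a)\|=\|a\|$ for every $\mu\ge\lambda$, so $\|\iota^{(n)}(a)\|=\|a\|$. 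Generation is then automatic, since $\fB$ is the closure of $\bigcup_\lambda \eta_\lambda(\fB_\lambda)$ and each $\eta_\lambda(\fB_\lambda)=\eta_\lambda(C^\ast(\iota_\lambda(\A)))=C^\ast(\iota(\A))$.

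Finally, the greatest lower bound property will fall out of the universal property of the direct limit. Each $\eta_\lambda$ is a morphism of C*-covers $(\fB_\lambda,\iota_\lambda)\to(\fB,\iota)$, so $(\fB,\iota)$ is a lower bound. For any other lower bound $(\fC,\kappa)$ equipped with morphisms $\rho_\lambda:\fB_\lambda\to\fC$, uniqueness of C*-cover morphisms forces $\rho_\mu\pi_{\lambda,\mu}=\rho_\lambda$, so the $\rho_\lambda$ cohere and the universal property of $\fB$ produces a unique $\ast$-homomorphism $\rho:\fB\to\fC$ with $\rho\eta_\lambda=\rho_\lambda$, hence $\rho\iota=\kappa$. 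The only nontrivial ingredient in the whole argument is the direct-limit norm formula; everything else is categorical bookkeeping relying on the uniqueness of morphisms between C*-covers.
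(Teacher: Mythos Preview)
Your proposal is correct and follows essentially the same route as the paper: both use the direct-limit norm formula to verify that $\iota$ is completely isometric, and then invoke the universal property of the direct limit to establish the greatest-lower-bound property. You are slightly more explicit about the preliminary bookkeeping (compatibility of the $\pi_{\lambda,\mu}$, generation of $\fB$ by $\iota(\A)$), but the core argument is identical.
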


\begin{proof}
For $\lambda\le \mu$, the diagram
\[\begin{tikzcd}
    & \fB & \\
    \fB_\lambda \arrow[ur,"\eta_\mu"] \arrow[rr,near start,"\pi_{\lambda,\mu}"] & & \fB_\mu \arrow[ul,"\eta_\pi",swap]\\
    & \A\arrow[ul,hook,"\iota_\lambda"] \arrow[ur,hook,swap,"\iota_\mu"] \arrow[uu,dashed],
\end{tikzcd}\]
commutes, where the vertical map defines $\iota$. For $n\ge 1$ and $a\in M_n(\A)$, for any fixed $\lambda\in \Lambda$, we have
\[
\|\iota^{(n)}(a)\| =
\limsup_{\mu\ge \lambda} \|\pi_{\lambda,\mu}^{(n)}\iota_\lambda^{(n)}(a)\| =
\limsup_{\mu\ge \lambda}\|\iota_\mu^{(n)}(a)\|=\|a\|.
\]
So, $\iota$ is completely isometric. Then, commutation of the diagram implies that $(\fB,\iota)$ is a C*-cover and $[\fB,\iota]\preceq [\fB_\lambda,\iota_\lambda]$ for every $\lambda$. Finally, if $(\fC,\rho)$ is any C*-cover for $\A$ with $[\fC,\rho]\preceq [\fB_\lambda,\iota_\lambda]$ for all $\lambda$, there are unique morphisms $\sigma_\lambda:(\fB_\lambda,\iota_\lambda)\to (\fC,\rho)$. The universal property of $\fB=\varinjlim_{\lambda} \fB_\lambda$ implies that there is a $\ast$-homomorphism $\sigma:\fB\to \fC$ with $\sigma\eta_\lambda=\sigma_\lambda$ for all $\lambda$. Then, $\sigma\iota=\sigma\eta_\lambda\iota_\lambda=\sigma_\lambda\iota_\lambda=\rho$, so $\sigma$ is a morphism of C*-covers and $[\fB,\iota]\succeq [\fC,\rho]$. Therefore, $[\fB,\iota]$ is the greatest lower bound.
\end{proof}

Our main question of interest is: To what extent can two operator algebras have the same lattice of C*-covers? We will discuss four possible notions of ``the same C*-covers".

\begin{definition}\label{def:sameness}
Let $\A$ and $\B$ be operator algebras.
\begin{itemize}
\item [(1)] We say that $\A$ and $\B$ are \textbf{lattice isomorphic} if there is an order isomorphism $F:\cstarlattice(\A)\to\cstarlattice(\B)$.
\item [(2)] We say that $\A$ and $\B$ are \textbf{lattice $\ast$-isomorphic} if there is an order isomorphism $F:\cstarlattice(\A)\to \cstarlattice(\B)$ such that for each C*-cover $(\fC,\iota)$ of $\A$, if $F([\fC,\iota])=[\fD,\eta]$, then $\fD\cong \fC$ as C*-algebras.
\item [(3)] We say that $\A$ and $\B$ are \textbf{lattice intertwined} if there is an order isomorphism $F:\cstarlattice(\A)\to \cstarlattice(\B)$ such that whenever $(\fC,\iota),(\fD,\eta)$ are C*-covers of $\A$ with $(\fD,\eta)\preceq (\fC,\iota)$, there are $\ast$-isomorphisms $\pi_\fC:\fC\to F(\fC)$ and $\pi_\fD:\fD\to F(\fD)$ such that the diagram
\[\begin{tikzcd}
    \fC \arrow[d] \arrow[r,"\pi_\fC"] & F(\fC) \arrow[d] \\
    \fD \arrow[r,"\pi_\fD"] & F(\fD)
\end{tikzcd}\]
commutes, where the vertical maps are the unique morphisms of C*-covers of $\A$ or $\B$, as appropriate.
\item [(4)] We say that $\A$ and $\B$ are \textbf{C*-cover equivalent} if whenever $(\fC,\iota)$ is a C*-cover of $\A$, then there is a C*-cover $(\fD,\eta)$ of $\B$ with $\fC\cong \fD$ as C*-algebras, and conversely if $(\fD,\eta)$ is a C*-cover of $\B$, then $\fD\cong\fC$ for some C*-cover $(\fC,\iota)$ of $\A$.
\end{itemize}
\end{definition}

It is immediate to see we have the following Hasse diagram of equivalence relations between operator algebras
\[\begin{tikzcd}
    & \text{Completely isometric isomorphism} \arrow[d,no head] \\
    &\text{Lattice intertwining} \arrow[d,no head]\\
    &\text{Lattice }\ast\text{-isomorphism} \arrow[dl,no head]\arrow[dr,no head]\\
    \text{Lattice isomorphism} &&\text{C*-cover equivalence}
\end{tikzcd}\]

Our first order of business is to develop some theory around these equivalences so that we can eventually show that these are, in fact, different relations, Theorem \ref{thm:allrelationsdifferent}.

Indeed, it is easy to see that lattice isomorphic is not lattice $*$-isomorphic or C$^*$-cover equivalent.
In particular, if $\fA$ is itself a C*-algebra, then a completely contractive homomorphism of $\fA$ into $B(H)$ is a $\ast$-homomorphism. Consequently, the only C*-cover of $\fA$ is $(\fA,\id_\fA)$. Therefore the lattice $\cstarlattice(\fA)$ is a single point. So, if $\fA$ and $\fB$ are nonisomorphic C*-algebras, then $\fA$ and $\fB$ are lattice isomorphic, but not lattice $\ast$-isomorphic and not C*-cover equivalent.

We next turn to some equivalent statements for lattice intertwined and it should be noted that lattice intertwined is really the most information you can get out of the lattice of C$^*$-covers without knowledge of the specific operator algebras.

\begin{proposition}\label{prop:intertwining}
Suppose $\A$ and $\B$ are operator algebras. The following are equivalent.
\begin{itemize}
\item[(i)] $\A$ and $\B$ are lattice intertwined.
\item[(ii)] There is a $\ast$-isomorphism
\[
\pi:\Cmax(\A)\to \Cmax(\B)
\]
which respects boundary ideals in the sense that $J\triangleleft \A$ is a boundary ideal for $\A$ if and only if $\pi(J)$ is a boundary ideal for $\B$.
\item[(iii)] There exist $*$-isomorphisms $\Phi : \Cmax(\A) \rightarrow \Cmax(\B)$ and $\phi : C^*_e(\A) \rightarrow C^*_e(\B)$ such that the following diagram commutes
\[\begin{tikzcd}
    \Cmax(\A) \arrow[d, "q_\A"] \arrow[r,"\Phi"] & \Cmax(\B) \arrow[d, "q_\B"] \\
    C^*_e(\A) \arrow[r,"\phi"] & C^*_e(\B)
\end{tikzcd}\]
where $q_\A$ and $q_\B$ are the unique canonical quotient maps.
\end{itemize}
\end{proposition}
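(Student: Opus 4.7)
The plan is to prove the cycle (iii)~$\Rightarrow$~(ii)~$\Rightarrow$~(iii) and the equivalence (iii)~$\Leftrightarrow$~(i), all via the fundamental correspondence between C*-covers of $\A$ and boundary ideals in $\Cmax(\A)$. Recall that a closed two-sided ideal $J\triangleleft \Cmax(\A)$ is a boundary ideal for $\A$ if and only if $J\subseteq \ker(q_\A)$, since the Shilov ideal is the largest boundary ideal; moreover, every C*-cover of $\A$ is equivalent to one of the form $(\Cmax(\A)/J,\, q_J\mu)$ for such a $J$, and the order on $\cstarlattice(\A)$ is opposite to inclusion of boundary ideals.

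For (iii)~$\Rightarrow$~(ii), commutativity of the square forces $\Phi(\ker q_\A)=\ker q_\B$. So for any boundary ideal $J$ of $\A$, the image $\Phi(J)\subseteq \ker q_\B$ is a boundary ideal of $\B$; applying $\Phi^{-1}$ yields the converse, so $\Phi$ respects boundary ideals. For (ii)~$\Rightarrow$~(iii), since $\pi$ preserves boundary ideals and the Shilov ideal is the largest such, we obtain $\pi(\ker q_\A)=\ker q_\B$. Hence $\pi$ descends to a well-defined $\ast$-isomorphism $\phi:C^*_e(\A)\to C^*_e(\B)$ satisfying $\phi q_\A=q_\B\pi$, and setting $\Phi=\pi$ gives (iii).

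For (iii)~$\Rightarrow$~(i), define $F([\Cmax(\A)/J,\, q_J\mu])=[\Cmax(\B)/\Phi(J),\, q_{\Phi(J)}\mu]$; the argument above shows $F$ is a well-defined order isomorphism. Given a comparable pair $(\fD,\eta)\preceq(\fC,\iota)$ corresponding to boundary ideals $J_\fC\subseteq J_\fD$, the $\ast$-isomorphism $\Phi$ descends to $\ast$-isomorphisms $\pi_\fC:\fC\to F(\fC)$ and $\pi_\fD:\fD\to F(\fD)$, and the required square commutes because all four maps in it are induced by the single global $\ast$-isomorphism $\Phi$ passing through nested quotients. Conversely, for (i)~$\Rightarrow$~(iii), any order isomorphism $F$ preserves the top and bottom elements, so it identifies $[\Cmax(\A),\mu]$ with $[\Cmax(\B),\mu]$ and $[C^*_e(\A),\epsilon]$ with $[C^*_e(\B),\epsilon]$; applying the lattice-intertwining condition to the comparable pair $[C^*_e(\A),\epsilon]\preceq [\Cmax(\A),\mu]$ produces $\Phi$ and $\phi$ fitting into the commutative square in (iii).

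The main bookkeeping obstacle is checking, in the (iii)~$\Rightarrow$~(i) direction, that the component $\ast$-isomorphisms produced by the quotients of $\Phi$ genuinely commute with the canonical C*-cover morphisms at each level. This is not deep, but it requires carefully identifying each C*-cover with the appropriate quotient of $\Cmax$ and tracking compatibility via $\mu$ and $\epsilon$; the point is that the entire family of maps $\{\Phi_J\}$ is obtained by a single universal construction, so compatibility is automatic once $\Phi$ is fixed.
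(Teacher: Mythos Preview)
Your proposal is correct and follows essentially the same strategy as the paper: both arguments rest on the identification of $\cstarlattice(\A)$ with the boundary ideals in $\Cmax(\A)$, and both obtain the intertwining in (i) by pushing a single global $\ast$-isomorphism $\Phi$ on $\Cmax$ through nested quotients. The only organizational difference is that the paper packages (i)$\Leftrightarrow$(ii) via a categorical reformulation (lattice intertwining as a natural isomorphism of the functors $G_\A$ and $G_\B$), whereas you route through (iii) for both equivalences; your direct argument for (ii)$\Leftrightarrow$(iii) via $\Phi(\ker q_\A)=\ker q_\B$ is in fact more explicit than the paper's treatment of (i)$\Leftrightarrow$(ii).
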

\begin{proof}
First let us see that (i) and (ii) are equivalent. 
As it is a poset, the lattice $\cstarlattice(\A)$ is a category. We can define a contravariant functor $G_\A$ from the category $\cstarlattice(\A)$ that sends each class to a representative C*-algebra in a C*-cover, and whenever $(\fD,\eta)\preceq (\fC,\iota)$, the associated morphism $[\fD,\eta]\preceq [\fC,\iota]$ is sent to the unique $\ast$-homomorphism $\fC\to \fD$ intertwining $\iota$ and $\eta$. Then, lattice intertwining is just the requirement that the functors $G_\A$ and $G_\B$ are naturally isomorphic.

Every class in $\cstarlattice(\A)$ has a representative of the form $[\Cmax(\A)/J,q_J\mu]$, where $J\triangleleft \Cmax(\A)$ is a boundary ideal for $\mu(\A)$, and the associated morphisms are just the induced quotient maps between boundary ideals. This gives a canonical choice for the functors $G_\A$ and $G_\B$.

(i) implies (iii) is trivial. We will show that (iii) implies (i). Assume now that (iii) is true. 
Suppose $(\fC,\iota)$ is a C$^*$-cover of $\A$. By universality there exist unique $*$-homomorphisms $q_1 : \Cmax(\A) \rightarrow \fC$ with $\iota = q_1\mu_\A$ and $q_2 : \fC \rightarrow C^*_e(\A)$ with $q_2\iota = \epsilon_\A$. 
Define $\fD = \Cmax(\B)/\Phi(\ker q_1)$ and let $q'_1 : \Cmax(\B) \rightarrow \fD$ be the induced quotient $*$-homomorphism. By uniqueness of quotient maps 
\[
\ker q'_1 =\Phi(\ker q_1) \subseteq \Phi(\ker q_2q_1) = \Phi(\ker q_\A) = \ker q_\B.
\]
Thus, $q'_1$ is completely isometric on $\B$ and so $(\fD, q'_1\mu_\B)$ is a C$^*$-cover of $\B$. 
Now define $F([\fC,\iota]) = [\fD, q'_1\mu_B]$ which we will now prove is the lattice intertwining. 

If $(\tilde \fC, \tilde\iota) \sim (\fC, \iota)$ by the $*$-isomorphism $\tilde \varphi$ then we have the following commutative diagram:
\[\begin{tikzcd}
    \Cmax(\A) \arrow[d, "\exists! \tilde q_1",dashed] \arrow[r, "\exists! \tilde\Phi",dashed] & \Cmax(\A) \arrow[d, "q_1"] 
    \\
    \tilde{\fC} \arrow[r, "\tilde\phi"] & \fC 
\end{tikzcd}
\]
It then follows that $\Cmax(\B)/\Phi(\tilde\Phi(\ker\tilde q_1)) = \Cmax(\B)/\Phi(\ker q_1) = \fD$. Hence, $F[(\tilde \fC, \tilde\iota)] = [\fD, q'_1\mu_B]$ is a well-defined map.

Interchanging the roles of $\A$ and $\B$ yields a well-defined function going the other way which acts as the inverse. Thus, $F$ is a bijective function on $\cstarlattice(\A)$. The uniqueness of the quotient maps yields the desired intertwining property (i).
\end{proof}

We will see now that lattice intertwining is not strong enough to distinguish an algebra and its conjugate.

\begin{proposition}\label{prop:intertwinedwithopposite}
If $\A$ be an operator algebra, then $\A$ and $\A^*$ are lattice intertwined.
\end{proposition}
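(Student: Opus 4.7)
The plan is to apply Proposition \ref{prop:intertwining}(ii): I would construct a $*$-isomorphism $\Phi:\Cmax(\A)\to \Cmax(\A^*)$ that identifies boundary ideals for $\A$ with boundary ideals for $\A^*$. The natural candidate comes from the universal property of $\Cmax$ applied to the ``take the adjoint of the other side'' map.

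First I would define $\psi:\A\to \Cmax(\A^*)$ by $\psi(a)=\mu_{\A^*}(a^*)^*$ and check that this is a completely isometric homomorphism whose image generates $\Cmax(\A^*)$. The homomorphism property uses that multiplication in $\A^*$ satisfies $a^*\cdot_{\A^*} b^* = (ba)^*$, so
\[
\psi(ab) = \mu_{\A^*}((ab)^*)^* = \mu_{\A^*}(b^*\cdot_{\A^*}a^*)^*
= \bigl(\mu_{\A^*}(b^*)\mu_{\A^*}(a^*)\bigr)^* = \psi(a)\psi(b),
\]
and the complete isometry reduces to the standard fact that for any matrix $[x_{ij}]$ over a C$^*$-algebra, $\|[x_{ij}^*]\|=\|[x_{ji}]\|$, together with the complete isometry of $\mu_{\A^*}$ on $\A^*\subseteq B(H)$. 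Universality of $\Cmax(\A)$ then provides a unique $*$-homomorphism $\Phi:\Cmax(\A)\to \Cmax(\A^*)$ with $\Phi(\mu_\A(a))=\mu_{\A^*}(a^*)^*$; by running the symmetric construction in the other direction and using uniqueness and the fact that $\mu_\A(\A)$ generates $\Cmax(\A)$, I would conclude that $\Phi$ is a $*$-isomorphism.

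Next I would verify that $\Phi$ preserves boundary ideals. Given a boundary ideal $J\triangleleft\Cmax(\A)$, the $*$-isomorphism $\Phi$ induces a $*$-isomorphism $\bar\Phi:\Cmax(\A)/J\to \Cmax(\A^*)/\Phi(J)$ with $\bar\Phi\circ q=\tilde q\circ \Phi$, where $q,\tilde q$ are the quotient maps. Using the identity $\mu_{\A^*}(a^*)=\Phi(\mu_\A(a)^*)$, for any matrix $[a_{ij}^*]\in M_n(\A^*)$,
\[
\bigl\|\tilde q^{(n)}\bigl([\mu_{\A^*}(a_{ij}^*)]\bigr)\bigr\|
= \bigl\|\bigl[\bar\Phi q(\mu_\A(a_{ij}))^*\bigr]\bigr\|
= \bigl\|q^{(n)}\bigl([\mu_\A(a_{ji})]\bigr)\bigr\|
= \|[a_{ji}]\|=\|[a_{ij}^*]\|,
\]
where the middle equality uses that $\bar\Phi$ is completely isometric and the transpose-adjoint identity for matrix norms, and the second-to-last uses the boundary property of $J$. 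So $\Phi(J)$ is a boundary ideal for $\A^*$. Symmetry of the construction, applied to $\Phi^{-1}$, gives the reverse direction.

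The main place where care is needed is bookkeeping the adjoints: $\A^*$ has multiplication $a^*\cdot b^*=(ba)^*$ as operators, so the $*$-iso $\Phi$ must send $\mu_\A(a)$ to the adjoint of $\mu_{\A^*}(a^*)$, not to $\mu_{\A^*}(a^*)$ itself, and this extra adjoint is exactly what keeps the quotient map $\tilde q$ completely isometric on $\mu_{\A^*}(\A^*)$ via the transpose identity. Once that is pinned down, Proposition \ref{prop:intertwining} yields the result immediately.
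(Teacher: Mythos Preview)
Your proof is correct. Both your argument and the paper's ultimately identify $\Cmax(\A)$ with $\Cmax(\A^*)$ via the adjoint, but the routes differ. The paper invokes \cite[Proposition~3.5]{Paulsen}: any completely contractive representation $\pi$ of $\A$ extends uniquely to a completely positive map on $\A+\A^\ast$, whose restriction to $\A^\ast$ is the homomorphism $a^\ast\mapsto \pi(a)^\ast$; this sets up a bijection between representations of $\A$ and $\A^\ast$ and shows that the \emph{identity map} on the ambient C$^*$-algebra realizes the diagram in Proposition~\ref{prop:intertwining}(iii). You instead build the $\ast$-isomorphism $\Phi$ directly from the universal property of $\Cmax$ applied to $a\mapsto \mu_{\A^\ast}(a^\ast)^\ast$, and verify criterion (ii) of Proposition~\ref{prop:intertwining} by hand using the transpose--adjoint norm identity. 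Your approach is more elementary (no operator-system extension needed) and handles the non-unital case uniformly, whereas the paper treats the unital case first and then appeals to Meyer's unitization result. The paper's route, on the other hand, makes it transparent that $\Cmax(\A)$ and $\Cmax(\A^\ast)$ are literally the same C$^*$-algebra with the same Shilov boundary ideal, which is conceptually a bit cleaner.
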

\begin{proof}
Assume that $\A$ is a unital operator algebra. If $\pi:\A\to B(H)$ is a completely contractive representation, then $\pi$ extends uniquely to a completely positive map $\tilde{\pi}:\A+\A^\ast\to B(H)$ by \cite[Proposition 3.5]{Paulsen}. Since a positive map is selfadjoint, $\tilde{\pi}\vert_{\A^\ast}$ satisfies $\tilde{\pi}(a^\ast)=\pi(a)^\ast$ for all $a\in \A$, from which it follows that $\tilde{\pi}\vert_{\A^\ast}$ is an algebra homomorphism. Moreover, if $\pi$ is completely isometric, then $\tilde{\pi}$ is a complete isometry, and so $\tilde{\pi}\vert_{\A^\ast}$ is completely isometric.

In particular, this gives the following commutative diagram:
\[\begin{tikzcd}
    \Cmax(\A) \arrow[d, "q"] \arrow[r,"\id"] & \Cmax(\A^*) \arrow[d, "q"] \\
    C^*_e(\A) \arrow[r,"\id"] & C^*_e(\A^*)
\end{tikzcd}\]
It is now straightforward to see that this result extends to the non-unital case by \cite{Meyer}. In particular, comparing universal properties shows that the natural $\ast$-homomorphism $\Cmax(\A)\to \Cmax(\A^1)$ is injective, and as a consequence of Meyer's result, $C^*_e(\A)$ lives naturally as a C*-subalgebra of $C^*_e(\A^1)$, see \cite[Proposition 4.3.5]{Blecher}. Because $(A^\ast)^1=(\A^1)^\ast$, upon unitizing and applying the previous argument, one can just restrict at the end to the C*-algebras generated by $\A$ in $\Cmax(\A)$ and $C^*_e(\A)$. Therefore, by the previous proposition, $\A$ and $\A^*$ are lattice intertwined.
\end{proof}

\begin{example}\label{ex:intertwinednotcii}
However, $\A$ and $\A^\ast$ may not be completely isometrically isomorphic. For instance, let
\[
\A = 
\left\{\begin{pmatrix}
    a & b & c \\
    0 & a & 0 \\
    0 & 0 & a
\end{pmatrix}\;\middle|\;
a,b,c\in \bbC\right\}.
\]
And so 
\[
\A^* = \left\{\begin{pmatrix}
    a & 0 & 0 \\
    b & a & 0 \\
    c & 0 & a
\end{pmatrix}\;\middle|\;
a,b,c\in \bbC\right\}.
\]
Suppose for the sake of contradiction that a completely isometric algebra homomorphism $\rho:\A\to \A^\ast$ exists. If
\[
N =
\spn\{E_{12},E_{13}\}
\]
consists of the nilpotents in $A$, then $N^\ast$ consists of the nilpotents in $A^\ast$. Since $\rho$ is an algebra homomorphism, $\rho$ preserves nilpotency and so $\rho(N)=N^\ast$. Thus $\rho$ restricts to a completely isometric map between $N$ and $N^\ast$. However, no such complete isometry exists, because $N$ is isomorphic to the row Hilbert space $R_2$ of dimension $2$, and $N^\ast$ is isomorphic to column Hilbert space $C_2$, and $R_2$ and $C_2$ are not completely isometric as operator spaces \cite[Chapter 14]{Paulsen}.


Therefore, lattice intertwining and completely isometrically isomorphic are different equivalences.
\end{example}

This is not the only way to have lattice intertwined operator algebras, see Proposition \ref{prop:intertwinedidempotents}. With either example we see that operator algebras can indeed have the ``same'' lattice of C$^*$-algebras while not being completely isometrically isomorphic.

Recall now that an operator algebra $\A$ is \textbf{approximately unital} if it contains a contractive approximate identity. That is, there is a net $(e_i)_{i\in I}$ of contractions in $A$ such that $\lim_i e_ia=\lim_i ae_i=a$ for all $a\in \A$. Every C$^*$-algebra is approximately unital.

\begin{theorem}\label{thm:direct_sum_covers}
Let $\A$ and $\B$ be operator algebras, and suppose that at least one of $\A$ or $\B$ is approximately unital. If $\pi:\A\oplus \B\to B(H)$ is a completely contractive homomorphism, then
\[
C^\ast(\pi(\A\oplus \B)) \cong C^\ast(\pi_\A(\A))\oplus C^\ast(\pi_\B(\B)),
\]
where $\pi_\A(a)=\pi(a, 0)$ and $\pi_\B(b)=\pi(0,b)$ for $a\in \A$ and $b\in \B$. Consequently, there is a lattice isomorphism between
\[
\cstarlattice(\A\oplus \B) \quad  \textrm{and} \quad\cstarlattice(\A)\times\cstarlattice(\B).
\]
\end{theorem}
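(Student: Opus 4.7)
The plan is to prove the C$^*$-algebra decomposition first, and then extract the lattice isomorphism as a formal consequence. Start by recording the algebraic orthogonality: since $(a,0)(0,b)=(0,b)(a,0)=0$ in $\A\oplus\B$ and $\pi$ is a homomorphism, we immediately have $\pi_\A(a)\pi_\B(b)=\pi_\B(b)\pi_\A(a)=0$, and taking adjoints yields $\pi_\B(b)^*\pi_\A(a)^*=\pi_\A(a)^*\pi_\B(b)^*=0$. The remaining (and more substantive) task is to show the mixed adjoint products $\pi_\A(a)^*\pi_\B(b)$ and $\pi_\A(a)\pi_\B(b)^*$ also vanish, which is where approximate unitality enters.

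Assume without loss of generality that $\A$ has a contractive approximate identity $(e_i)$. By compactness of the unit ball in the weak operator topology, I would pass to a subnet and let $P\in B(H)$ be the WOT-limit of $\pi_\A(e_i)$. Then $P$ is contractive, and $P^2=P$ because, for fixed $i$, $\pi_\A(e_i)\pi_\A(e_j)=\pi_\A(e_ie_j)\to\pi_\A(e_i)$ in norm as $j\to\infty$, and WOT-limiting in $i$ gives $P\cdot P=P$. A contractive idempotent in $B(H)$ is self-adjoint (the kernel must be orthogonal to the range), so $P=P^*$ is a projection. The c.a.i.\ property forces $P\pi_\A(a)=\pi_\A(a)P=\pi_\A(a)$, while the identity $\pi_\B(b)\pi_\A(e_i)=\pi_\A(e_i)\pi_\B(b)=0$ forces $P\pi_\B(b)=\pi_\B(b)P=0$; taking adjoints (using $P=P^*$) propagates these to $\pi_\A(a)^*$ and $\pi_\B(b)^*$. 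Then for any $x\in C^\ast(\pi_\A(\A))$ and $y\in C^\ast(\pi_\B(\B))$, inserting $P$ gives $xy=(xP)y=x(Py)=0$, and similarly $yx=0$. Thus $C^\ast(\pi_\A(\A))$ and $C^\ast(\pi_\B(\B))$ are orthogonal C$^\ast$-subalgebras of $B(H)$, and the sum $C^\ast(\pi_\A(\A))+C^\ast(\pi_\B(\B))$ is a closed $\ast$-subalgebra (block-diagonal with respect to $P$ and $I-P$) equal to $C^\ast(\pi(\A\oplus\B))$ and internally isomorphic to the external direct sum.

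For the lattice statement, define $F:\cstarlattice(\A)\times\cstarlattice(\B)\to\cstarlattice(\A\oplus\B)$ by $([\fC_\A,\iota_\A],[\fC_\B,\iota_\B])\mapsto[\fC_\A\oplus\fC_\B,\iota_\A\oplus\iota_\B]$, where $(\iota_\A\oplus\iota_\B)(a,b)=(\iota_\A(a),\iota_\B(b))$ is completely isometric because the norm on $\A\oplus\B$ is the maximum of the component norms. Conversely, given a C$^\ast$-cover $(\fC,\iota)$ of $\A\oplus\B$, the restrictions $\iota_\A(a)=\iota(a,0)$ and $\iota_\B(b)=\iota(0,b)$ are completely isometric (again by the max-norm) and the previous paragraph gives $\fC\cong C^\ast(\iota_\A(\A))\oplus C^\ast(\iota_\B(\B))$. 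Checking that these assignments descend to equivalence classes, are mutually inverse, and preserve the order reduces to the fact that $\ast$-isomorphisms and quotients of a direct sum of C$^\ast$-algebras split componentwise.

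The main obstacle I anticipate is the orthogonality step in paragraph two: multiplicative orthogonality of $\pi_\A(\A)$ and $\pi_\B(\B)$ is automatic from the product in $\A\oplus\B$, but promoting this to the generated C$^\ast$-algebras requires handling mixed adjoint products. The approximate identity hypothesis is used only to manufacture the projection $P$, and the crucial observation making the argument work is that any WOT-limit of a c.a.i.\ under a completely contractive representation is automatically a norm-one idempotent, hence a self-adjoint projection that serves as a ``local unit'' separating the two summands.
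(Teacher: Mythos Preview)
Your proof is correct, and for the orthogonality step it takes a genuinely different route from the paper. The paper avoids WOT-limits entirely: it cites \cite[Lemma 2.1.7]{Blecher} to observe that $(\pi_\A(e_i)^\ast\pi_\A(e_i))$ is an approximate identity for $C^\ast(\pi_\A(\A))$, and then writes
\[
\pi_\A(a)^\ast\pi_\B(b)=\lim_i \pi_\A(a)^\ast\pi_\A(e_i)^\ast\,\pi_\A(e_i)\pi_\B(b)=0,
\]
a pure norm computation. Your approach instead manufactures a self-adjoint projection $P$ as a WOT cluster point of $\pi_\A(e_i)$ and uses it as a block-diagonal splitter. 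Your method is more self-contained (no external lemma) and yields the extra structural datum $P$, which makes closedness of the sum immediate; the paper's method stays in norm topology and is marginally more elementary. One small point of care in your write-up: once you pass to a WOT-convergent subnet, the iterated-limit argument for $P^2=P$ and for $P\pi_\A(a)=\pi_\A(a)$ should be run along that subnet, and you are tacitly using that a subnet of a c.a.i.\ is again a c.a.i.\ --- true, but worth a word. The lattice-isomorphism paragraph matches the paper's argument essentially verbatim.
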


\begin{proof}
To show that $C^\ast(\pi(\A\oplus \B))\cong C^\ast(\pi_\A(\A))\oplus C^\ast(\pi_\B(\B))$, it suffices to show that 
\[
C^\ast(\pi_\A(\A))\cdot C^\ast(\pi_\B(\B))=\{0\}.
\]
This will follow from a standard approximation argument using $\ast$-polynomials in $\A$ and $\B$ if we show that for all $a\in \A$ and $b\in \B$, $\pi_\A(a)\pi_\B(b)=\pi_\B(b)\pi_\A(a)=\pi_\A(a)^\ast \pi_\B(b)=\pi_\B(b) \pi_\A(a)^\ast = 0$. 

Now, $\pi$ is an algebra homomorphism and so $\pi_\A(a)\pi_\B(b)=\pi_\B(b)\pi_\A(a)=0$.
Without loss of generality, suppose that $\A$ is approximately unital, with approximate unit $(e_i)_{i\in I}$. Then $(\pi_\A(e_i))_{i\in I}$ is an approximate unit for $\pi_\A(\A)$. A standard application of the C*-identity and triangle inequality shows that both $(\pi_\A(e_i)^\ast\pi_\A(e_i))_{i\in I}$ and $(\pi_\A(e_i)\pi_\A(e_i)^\ast)_{i\in I}$ are approximate identities for the C*-algebra $C^\ast(\pi_\A(\A))$, see \cite[Lemma 2.1.7]{Blecher}. Therefore 
\[\pi_\A(a)=\lim_i \pi_\A(e_i)^\ast\pi_\A(e_i)\pi_\A(a),\]
and so
\[
\pi_\A(a)^\ast \pi_\B(b) =
\lim_{i\in I} \pi_\A(a)^\ast \pi_\A(e_i)^\ast \pi_\A(e_i)\pi_\B(b) = 0
\]
because $\pi_\A(e_i)\pi_\B(b) = 0$. A symmetrical argument shows $\pi_\B(b)\pi_\A(a)^\ast=0$, and therefore 
\[
C^\ast(\pi_\A(\A))C^\ast(\pi_\B(\B))=\{0\},
\]
which implies $C^\ast(\pi(\A\oplus \B)) \cong C^\ast(\pi_\A(\A))\oplus C^\ast(\pi_\B(\B))$ as C*-algebras.

Now, in the case when $\pi$ is completely isometric, this shows that every C*-cover for the operator algebra $\A\oplus \B$ must be of the form $(\fC\oplus \fD, \iota_\fC\oplus\iota_\fD)$ where $(\fC,\iota_\fC)$ is a C*-cover of $\A$ and $(\fD,\iota_\fD)$ is a C*-cover of $\B$. Moreover, a C*-cover morphism
\[
\sigma:(\fC_1\oplus \fD_1,\iota_{\fC_1}\oplus \iota_{\fD_1})\to (\fC_2\oplus \fD_2,\iota_{\fC_2}\oplus \iota_{\fD_2})
\]
between two such covers maps $\iota_{\fC_1}(\A)$ to $\iota_{\fC_2}(\A)\subseteq \fC_2\oplus 0$, and therefore satisfies $\sigma(\fC_1\oplus 0) = \sigma(C^\ast(\iota_{\fC_1}(\A)))\subseteq \fC_2\oplus 0$. Similarly $\sigma(0\oplus \fC_1)\subseteq 0\oplus \fC_2$. Therefore $\sigma$ must have the form $\sigma=\sigma_\A\oplus \sigma_\B$ where $\sigma_\A:\fC_1\to \fC_2$ and $\sigma_\B:\fD_1\to \fD_2$ are morphisms of C*-covers. And, any such direct sum of morphisms of C*-covers yields a morphism. This shows that there is a lattice isomorphism between
\[
\cstarlattice(\A\oplus \B) \quad \textrm{and} \quad \cstarlattice(\A)\times\cstarlattice(\B). \qedhere
\]
\end{proof}

\begin{corollary}
Suppose $\A$, $\tilde \A$, $\B$, and $\tilde \B$ are operator algebras with $\A$ and $\tilde \A$ approximately unital. If both pairs $\A$, $\tilde \A$ and $\B$, $\tilde \B$ are individually completely isometrically isomorphic, lattice intertwining, lattice $*$-isomorphic, lattice isomorphic, or C$^*$-cover equivalent then so are $\A\oplus \B$ and $\tilde \A \oplus \tilde \B$.
\end{corollary}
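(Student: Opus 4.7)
The plan is to leverage Theorem~\ref{thm:direct_sum_covers} to reduce each claim about $\A\oplus\B$ to the corresponding claim about the factors. Since both $\A$ and $\tilde\A$ are approximately unital, the hypothesis of Theorem~\ref{thm:direct_sum_covers} is satisfied for both $\A\oplus\B$ and $\tilde\A\oplus \tilde\B$, so I get canonical lattice isomorphisms
\[
\Theta : \cstarlattice(\A\oplus\B)\xrightarrow{\;\sim\;} \cstarlattice(\A)\times \cstarlattice(\B),
\qquad
\tilde\Theta : \cstarlattice(\tilde\A\oplus\tilde\B)\xrightarrow{\;\sim\;} \cstarlattice(\tilde\A)\times \cstarlattice(\tilde\B),
\]
and moreover every C*-cover of the direct sum is (up to equivalence) a direct sum $(\fC\oplus\fD,\iota_\fC\oplus\iota_\fD)$ and every morphism between two such covers splits as $\sigma_\A\oplus\sigma_\B$.

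The completely isometric case is essentially immediate: given completely isometric isomorphisms $\phi:\A\to\tilde\A$ and $\psi:\B\to \tilde\B$, the map $\phi\oplus\psi$ is a completely isometric isomorphism $\A\oplus\B\to \tilde\A\oplus \tilde\B$. For the remaining four equivalences, I would take lattice isomorphisms $F_\A:\cstarlattice(\A)\to \cstarlattice(\tilde\A)$ and $F_\B:\cstarlattice(\B)\to \cstarlattice(\tilde\B)$ witnessing the hypothesized equivalence, and define
\[
F := \tilde\Theta^{-1}\circ (F_\A\times F_\B)\circ \Theta : \cstarlattice(\A\oplus\B)\to \cstarlattice(\tilde\A\oplus\tilde\B).
\]
This is automatically a lattice isomorphism, settling the lattice isomorphism case. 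For lattice $*$-isomorphism, note that the representative C*-algebra of $F([\fC\oplus\fD,\iota_\fC\oplus\iota_\fD])$ is $\tilde\fC\oplus \tilde\fD$, where $F_\A([\fC,\iota_\fC]) = [\tilde\fC,\tilde\iota_\fC]$ and $F_\B([\fD,\iota_\fD]) = [\tilde\fD,\tilde\iota_\fD]$; by hypothesis $\fC\cong \tilde\fC$ and $\fD\cong \tilde\fD$ as C*-algebras, hence $\fC\oplus\fD\cong \tilde\fC\oplus \tilde\fD$. The C*-cover equivalence case is the analogous statement but without a lattice isomorphism, proved the same way using only that the C*-algebras underlying C*-covers of a direct sum are precisely the direct sums of C*-cover C*-algebras of the factors.

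For lattice intertwining, the one step that needs care is the commutativity of the intertwining square, which is where the splitting of morphisms from Theorem~\ref{thm:direct_sum_covers} does all the work. Given $(\fD_1\oplus\fD_2,\eta_1\oplus\eta_2)\preceq (\fC_1\oplus\fC_2,\iota_1\oplus\iota_2)$ in $\cstarlattice(\A\oplus\B)$, the unique morphism between them splits as $\sigma_\A\oplus\sigma_\B$, where $\sigma_\A:(\fC_1,\iota_1)\to (\fD_1,\eta_1)$ and $\sigma_\B:(\fC_2,\iota_2)\to (\fD_2,\eta_2)$ are the C*-cover morphisms for $\A$ and $\B$ respectively. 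The lattice intertwinings for the factors yield $*$-isomorphisms $\pi_{\fC_i}$ and $\pi_{\fD_i}$ making the two individual squares commute; taking direct sums assembles them into a commuting diagram for the direct sum covers, and the underlying lattice isomorphism is exactly $F$ defined above.

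The main (minor) obstacle is bookkeeping: one must verify that the decomposition $\sigma=\sigma_\A\oplus\sigma_\B$ coming from Theorem~\ref{thm:direct_sum_covers} is compatible with the factor lattice isomorphisms $F_\A$ and $F_\B$ assembled via the product, so that the intertwining diagram on $\A\oplus\B$ genuinely reduces to the two intertwining diagrams on $\A$ and on $\B$. Once this splitting is observed, each property transfers factorwise without further difficulty.
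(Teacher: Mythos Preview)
Your proposal is correct and matches the paper's approach: the paper states this corollary without proof, as an immediate consequence of Theorem~\ref{thm:direct_sum_covers}, and your argument is precisely the natural way to unpack that implication---split every cover and every morphism into factors via $\Theta$ and $\tilde\Theta$, then apply the hypothesized equivalence factorwise.
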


\begin{corollary}\label{cor:addacstaralgebra}
If $\A$ is an operator algebra and $\fC$ is a C$^*$-algebra, then $\A$ and $\A\oplus \fC$ are lattice isomorphic.
\end{corollary}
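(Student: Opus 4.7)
The plan is to apply Theorem \ref{thm:direct_sum_covers} directly, using the fact that any C$^*$-algebra is automatically approximately unital (a standard fact: any C$^*$-algebra has a contractive approximate identity). Since $\fC$ is a C$^*$-algebra, the hypothesis of the theorem is satisfied by taking the role of the approximately unital algebra to be $\fC$, so
\[
\cstarlattice(\A\oplus \fC) \cong \cstarlattice(\A)\times \cstarlattice(\fC)
\]
as complete lattices.

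Next, I would invoke the observation made earlier in the excerpt (in the discussion right after Definition \ref{def:sameness}) that if $\fC$ is a C$^*$-algebra, then every completely contractive homomorphism of $\fC$ into $B(H)$ is already a $\ast$-homomorphism, so the only C$^*$-cover of $\fC$ up to the equivalence $\sim$ is $(\fC,\id_\fC)$. Consequently $\cstarlattice(\fC)$ is a singleton.

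Combining these two facts, the product $\cstarlattice(\A)\times \cstarlattice(\fC)$ is order-isomorphic to $\cstarlattice(\A)$ via the projection onto the first coordinate, and composing with the isomorphism from Theorem \ref{thm:direct_sum_covers} yields the required lattice isomorphism $\cstarlattice(\A\oplus \fC)\cong \cstarlattice(\A)$. There is no substantive obstacle here since this is a purely formal deduction from the previous theorem and the triviality of the C$^*$-cover lattice of a C$^*$-algebra; the only minor point to verify is that the order isomorphism in Theorem \ref{thm:direct_sum_covers} is indeed the one sending $[\fB\oplus \fD,\iota_\fB\oplus \iota_\fD]$ to the pair $([\fB,\iota_\fB],[\fD,\iota_\fD])$, which is explicit in the proof just given.
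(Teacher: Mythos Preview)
Your proposal is correct and matches the paper's own proof essentially verbatim: both apply Theorem~\ref{thm:direct_sum_covers} (using that the C$^*$-algebra $\fC$ is approximately unital) and then note that $\cstarlattice(\fC)$ is a single point, yielding $\cstarlattice(\A\oplus\fC)\cong\cstarlattice(\A)\times\{\ast\}\cong\cstarlattice(\A)$. The paper even records the explicit isomorphism $F([\fB,\iota])=[\fB\oplus\fC,\iota\oplus\id_\fC]$, which is the inverse of the projection map you describe.
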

\begin{proof}
The previous theorem shows that $\A$ and $\A\oplus \fC$ are naturally lattice isomorphic via the lattice isomorphism $F([\A,\iota])=(\A\oplus \fC,\iota\oplus \id_\fC)$ because $\cstarlattice(\fC)$ is a single point.
\end{proof}
  However, $F$ may not implement a lattice $\ast$-isomorphism. For instance, $\T_2$, and $\T_2 \oplus \K$, where $\T_2$ is the upper triangular $2\times 2$ matrices and $\K$ is the compact operators on a separable Hilbert space, are lattice isomorphic but not lattice $*$-isomorphic. In particular, $M_2$ is a C$^*$-cover of $\T_2$ but clearly every C$^*$-cover of $\T_2\oplus \K$ must be infinite-dimensional.

Theorem \ref{thm:direct_sum_covers} fails if both $\A$ and $\B$ are not approximately unital. The next two examples illustrate this fact.
\begin{example}\label{eg:direct_sum_nonunital}
Let
\[
\A =
\text{span}(E_{12}) = 
\left\{\begin{pmatrix}
    0 & a \\
    0 & 0
\end{pmatrix}\;\middle|\; a\in \bbC\right\} \subset M_2.
\]
The homomorphism $\pi:\A\oplus \A \to M_3$ given by
\[
\pi\left(\begin{pmatrix}
    0 & a \\
    0 & 0
\end{pmatrix},\begin{pmatrix}
    0 & b \\
    0 & 0
\end{pmatrix}\right) =
\frac{1}{2}\begin{pmatrix}
    0 & a & b \\
    0 & 0 & 0 \\
    0 & 0 & 0
\end{pmatrix}
\]
is completely contractive, by the triangle inequality. Here, $C^\ast(\pi(\A\oplus \A))=M_3$ does not split as a direct sum of two proper C*-subalgebras. Moreover, if $\id_{\A\oplus \A}:\A\oplus \A\to M_2\oplus M_2$ is the identity representation, then 
\[
(M_2\oplus M_2\oplus M_3,\id_{\A\oplus \A}\oplus \pi)
\]
is a C*-cover which does not split as a direct sum of two C*-covers of $\A$.
\end{example}

\begin{example}
Let $z\in A(\bbD)$ denote the identity function on $\bbD$, the standard generator, and consider the nonunital operator subalgebra
\[
\B=\langle z\rangle =\cspan\{z,z^2,z^3,\dots\} =
\{f\in A(\bbD) \mid f(0)=0\},
\]
which is the universal \emph{nonunital} operator algebra generated by a single contraction. If $a,b\in B(H)$ are any contractions which satisfy $ab=ba=0$, then there is a completely contractive homomorphism $\pi:\B\oplus \B\to B(H)$ which is determined by $\pi(z,0)=a$ and $\pi(0,z)=b$. For instance, if we choose $a$ and $b$ such that $a^\ast b\ne 0$ or $ba^\ast\ne 0$, then $C^\ast(\pi(\B\oplus 0))C^\ast(\pi(0\oplus \B))\ne \{0\}$, and so $C^\ast(\pi(\B\oplus \B))$ does not split as an internal direct sum of $C^\ast(\pi(\B,0))$ and $C^\ast(\pi(0,\B))$. As in Example \ref{eg:direct_sum_nonunital}, direct summing with the identity representation yields many C*-covers for $\B\oplus \B$ which do not arise as direct sums.
\end{example}

\section{A one point lattice of C*-covers?}

\begin{question}\label{ques:one_point_lattice}
If $\A$ is an operator algebra which is lattice isomorphic to a C$^*$-algebra, is $\A$ itself a C$^*$-algebra? (That is, can the lattice of a properly non-selfadjoint operator algebra be a single point?)
\end{question}

For operator systems, it was shown in \cite{KirWas} that there are examples where the minimal and maximal C$^*$-covers coincide. However, their proof does not generalize to the operator case since it relies on forgetting the multiplicative structure of C$^*$-algebra. In particular, an essential difference is that the maximal C*-algebra of a C*-algebra $\fA$ \emph{considered only as an operator system} is much larger than $\fA$, because its operator system representations (i.e. ucp maps) need not be multiplicative on $\fA$. Considered as an operator \emph{algebra}, multiplicativity is forced and so $C^\ast_{\max}(\fA)=\fA$ in the setting of operator algebras. We do not know whether the lattice of C$^*$-covers can be a single point for a non-selfadjoint operator algebra but we do have some partial results in this direction.

The proof of the following is straightforward.

\begin{proposition}\label{prop:one_point_equivalence}
Let $\A$ be an operator algebra. The following are equivalent.
\begin{itemize}
\item [(1)] $\A$ is lattice isomorphic to a C*-algebra.
\item [(2)] The C*-cover morphism
\[
C^\ast_{\max}(\A)\to C^\ast_{e}(\A)
\]
is a $\ast$-isomorphism.
\item [(3)] Whenever $\iota:A\to B(H)$ is a completely isometric homomorphism, the induced $\ast$-homomorphism $\tilde{\iota}:C^\ast_{\max}(\A)\to B(H)$ with $\tilde{\iota}\vert_\A=\iota$ is injective.
\end{itemize}
\end{proposition}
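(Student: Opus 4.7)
The plan is to exploit the key observation, already noted earlier in the excerpt, that every C*-algebra $\fA$ has a one-point lattice of C*-covers, namely $\cstarlattice(\fA)=\{[\fA,\id_\fA]\}$. Combined with the fact from Proposition \ref{prop:cstarlattice} that $[C^\ast_{\max}(\A),\mu]$ and $[C^\ast_{e}(\A),\epsilon]$ are the maximum and minimum elements of $\cstarlattice(\A)$, the three conditions become almost tautologically equivalent. I will verify the implications in the cycle (1)$\Rightarrow$(2)$\Rightarrow$(3)$\Rightarrow$(1).

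For (1)$\Rightarrow$(2): if $\A$ is lattice isomorphic to some C*-algebra $\fB$, then $\cstarlattice(\A)$ has the same cardinality as $\cstarlattice(\fB)$, which is $1$. In particular the maximum $[C^\ast_{\max}(\A),\mu]$ and minimum $[C^\ast_{e}(\A),\epsilon]$ coincide, and by the definition of the equivalence relation $\sim$ the unique C*-cover morphism $C^\ast_{\max}(\A)\to C^\ast_{e}(\A)$ intertwining $\mu$ and $\epsilon$ is a $\ast$-isomorphism.

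For (2)$\Rightarrow$(3): let $\iota:\A\to B(H)$ be a completely isometric homomorphism, so that $(C^\ast(\iota(\A)),\iota)$ is a C*-cover of $\A$. By Proposition \ref{prop:cstarlattice} it sits between the minimum and maximum, giving surjective C*-cover morphisms
\[
C^\ast_{\max}(\A)\stackrel{\tilde{\iota}}{\twoheadrightarrow} C^\ast(\iota(\A)) \twoheadrightarrow C^\ast_{e}(\A),
\]
whose composition is the canonical quotient. If hypothesis (2) holds, this composition is a $\ast$-isomorphism, hence injective, which forces $\tilde{\iota}$ to be injective as well.

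For (3)$\Rightarrow$(1): apply (3) to the canonical completely isometric embedding $\epsilon:\A\to C^\ast_{e}(\A)$. The induced map $\tilde{\epsilon}:C^\ast_{\max}(\A)\to C^\ast_{e}(\A)$ is then injective, and it is also surjective because its image is a C*-subalgebra containing $\epsilon(\A)$, which generates $C^\ast_{e}(\A)$. Thus $\tilde{\epsilon}$ is a $\ast$-isomorphism, forcing every C*-cover (which lies between max and min in the lattice) to be equivalent to both the maximum and the C*-envelope. Hence $\cstarlattice(\A)$ is a single point, and therefore lattice isomorphic to, for instance, $\cstarlattice(\bbC)$. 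No step should present a real obstacle; the entire content is packaged in the lattice structure of Proposition \ref{prop:cstarlattice} and the universal properties of $C^\ast_{\max}$ and $C^\ast_{e}$.
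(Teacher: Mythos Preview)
Your argument is correct and is precisely the natural one; the paper itself omits the proof, stating only that it is ``straightforward,'' so your cycle $(1)\Rightarrow(2)\Rightarrow(3)\Rightarrow(1)$ via the lattice structure of Proposition~\ref{prop:cstarlattice} and the universal properties of $C^\ast_{\max}$ and $C^\ast_e$ is exactly the intended route.
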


\begin{remark}
Proposition \ref{prop:one_point_equivalence} implies that if $\A$ has a one point lattice, and $C^\ast_{\max}(\A)=C^\ast_{e}(\A)$ is simple, then $\A$ is simple as an operator algebra. Indeed, any representation of $\A$ extends to a $\ast$-representation of $C^\ast_{\max}(\A)$, which is automatically completely isometric. So, every representation of $\A$ is completely isometric. Although, given the discussion at the end of this paper simple operator algebras, that are not C*-algebras, are unlikely to have a one point lattice.
\end{remark}

Suppose $\A$ is an operator algebra and $\rho:\A \rightarrow B(H)$ is a completely contractive representation. Recall, that a {\em dilation} $\sigma: \A \rightarrow B(K)$ of $\rho$ is a completely contractive representation such that $H \subset K$ is a semi-invariant subspace, meaning that $\sigma$ has the following form
\[
\sigma = \left[\begin{matrix} * & 0 & 0 \\ * & \rho & 0 \\ * & * & * \end{matrix} \right].
\]
The dilation $\sigma$ is an \emph{extension} if $H$ is invariant for $\sigma(\A)$, and a \emph{co-extension} if $H$ is co-invariant for $\sigma(\A)$. Then $\rho$ is called {\em maximal} if the only dilations possible are direct sums, meaning $H$ will always be a reducing subspace of $\sigma$. Maximal dilations were introduced by Muhly and Solel \cite{MS} and then shown to exist by a direct argument by Dritschel and McCullough \cite{DriMcc}. Every maximal representation extends to a $*$-homomorphism of the C$^*$-envelope and in particular, if $\rho$ is a completely isometric maximal representation then $C^*_e(\A) \simeq C^*(\rho(\A))$. For a thorough treatment of the dilation theory of representations see \cite{DKDoc}, and for the non-unital case see \cite{DOS}, which carefully develops the theory of the unique extension property (UEP) and maximal representations.

The next result is somewhat opposite to the previous discussion of maximality.

\begin{proposition} \label{prop:non_maximal_representation}
If $\A$ is a non-selfadjoint operator algebra, then it has a non-maximal representation. 
\end{proposition}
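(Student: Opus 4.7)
The plan is to exhibit a non-maximal representation by leveraging the gap between $\A$ and $C^*_e(\A)$ coming from non-selfadjointness. First, via Meyer's theorem on unitizations it suffices to handle the unital case: any non-maximal representation of $\A^1$ restricts to a non-maximal representation of $\A$. So WLOG $\A$ is unital. Since $\A$ is non-selfadjoint, pick $a \in \A$ with $\|a\|=1$ and $a^\ast \notin \A$.

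The core of the argument is to compress a completely isometric maximal representation to a non-reducing subspace. Take $\pi := \pi_0 \circ \iota$, where $\iota:\A \hookrightarrow C^\ast_e(\A)$ is the canonical embedding and $\pi_0:C^\ast_e(\A)\to B(H)$ is a faithful $\ast$-representation. Then $\pi$ is a completely isometric maximal representation with $C^\ast(\pi(\A))=C^\ast_e(\A)$. I would then locate a proper invariant subspace $K \subsetneq H$ for $\pi(\A)$ which is \emph{not} invariant for $\pi(\A)^\ast$. The existence of such $K$ uses non-selfadjointness: for a suitable $\pi$ (possibly replacing $\pi_0$ by an amplification or a highly reducible faithful representation) one produces a nonzero operator $T \in \pi(\A)'\setminus C^\ast(\pi(\A))'$, and then $K = \ker T$ is invariant under $\pi(\A)$ but fails to be invariant under $\pi(\A)^\ast$, since $T\pi(a)^\ast \neq \pi(a)^\ast T$ for some $a$. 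Defining $\rho(x) := \pi(x)|_K : \A \to B(K)$, the representation $\pi$ serves as a dilation of $\rho$, and because $K$ is semi-invariant but not reducing, $\pi$ is a non-trivial dilation, so $\rho$ is not maximal.

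An alternative route, useful when the commutant trick fails (for example when $\pi(\A)' = C^\ast(\pi(\A))'$), is to construct a completely contractive representation $\rho:\A\to B(H')$ with $\|\rho(a)\| < 1$ directly. By the Kadison--Schwarz inequality, any c.p.~extension $\phi:C^\ast_e(\A)\to B(H')$ of $\rho$ satisfies $\phi(a^\ast a) \geq \rho(a)^\ast \rho(a)$ strictly (since $\|\rho(a)\|<1=\|a^\ast a\|^{1/2}$), leaving genuine room for $\phi(a^\ast a)$ to vary. This freedom yields multiple distinct c.p.~extensions, so $\rho$ fails the unique extension property and is therefore non-maximal. The existence of such a strictly contractive $\rho$ uses the non-selfadjointness of $\A$ essentially: one builds it either by a Sz.-Nagy style dilation/compression of $\pi$, or by exploiting that $C^\ast_{\max}(\A)$ admits $\ast$-representations whose restrictions to $\A$ have strictly smaller norm on $a$ than $\pi$ does.

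The main obstacle is ensuring that one of these two constructions produces a genuinely non-maximal $\rho$ for \emph{every} non-selfadjoint $\A$. The subtlety is that one must avoid circularity with the open question posed earlier in this section (whether the lattice of C$^\ast$-covers of a non-selfadjoint algebra can be a single point), since non-maximality and the distinctness of $C^\ast_{\max}(\A)$ and $C^\ast_e(\A)$ are closely related. My expectation is that the proof circumvents this by using the strict-contraction/Kadison--Schwarz approach, which only requires producing a single c.p.~extension different from any $\ast$-homomorphic one, and does not need $C^\ast_{\max}(\A)\neq C^\ast_e(\A)$.
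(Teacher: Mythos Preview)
Your proposal has genuine gaps in both branches.

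\textbf{The commutant/kernel route fails outright in basic examples.} Take $\A=\T_2\subset M_2=C^\ast_e(\T_2)$. Every faithful $\ast$-representation $\pi_0$ of $M_2$ is unitarily equivalent to an amplification $x\mapsto x\otimes I_{K}$, and for any such $\pi=\pi_0\circ\iota$ one has
\[
\pi(\T_2)'=(\T_2\otimes I)'=\T_2'\,\overline{\otimes}\,B(K)=\bbC I\,\overline{\otimes}\,B(K)=(M_2\otimes I)'=C^\ast(\pi(\T_2))'.
\]
So there is no $T\in \pi(\A)'\setminus C^\ast(\pi(\A))'$ to work with, no matter how ``highly reducible'' you make $\pi_0$. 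Even when such $T$ exists, $\ker T$ can be $\{0\}$, and you give no mechanism to pass from $T\pi(a)^\ast\neq \pi(a)^\ast T$ to non-invariance of $\ker T$.

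\textbf{The Kadison--Schwarz route is not valid as stated.} The strict inequality $\phi(a^\ast a)\ge \rho(a)^\ast\rho(a)$ does \emph{not} yield freedom in choosing $\phi(a^\ast a)$; that value can still be pinned down by the rest of the data. More decisively, $\|\rho(a)\|<\|a\|$ does not force failure of the UEP. Let $\A=A(\bbD)\oplus A(\bbD)$, $a=(z,0)$ (so $a^\ast\notin\A$, $\|a\|=1$), and let $\rho:\A\to C(\bbT)$ be projection onto the second summand. Then $\|\rho(a)\|=0$, yet a short multiplicative-domain argument shows the unique ucp extension of $\rho$ to $C^\ast_e(\A)=C(\bbT)\oplus C(\bbT)$ is the $\ast$-homomorphism $(f,g)\mapsto g$. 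So $\rho$ is maximal. Your heuristic that ``strict contraction leaves room for multiple extensions'' is simply false.

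\textbf{What the paper actually does.} The paper produces the required invariant-but-not-reducing subspace directly from non-selfadjointness, without commutants or Kadison--Schwarz. Pick $a\in\A$ with $a^\ast\notin\A$. Hahn--Banach gives a norm-one functional $\rho$ on $C^\ast_e(\A)$ with $\rho(\A)=0$ and $\rho(a^\ast)\neq 0$. Write $\rho(c)=\langle\pi(c)\xi,\eta\rangle$ for a $\ast$-representation $\pi:C^\ast_e(\A)\to B(K)$ and unit vectors $\xi,\eta$. Set $H=\overline{\pi(\A)\xi}$. Then $H$ is $\pi(\A)$-invariant, $\xi\in H$, and $\rho(\A)=0$ forces $\eta\in H^\perp$. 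The key computation
\[
0\neq \rho(a^\ast)=\langle\pi(a^\ast)\xi,\eta\rangle=\langle\xi,\pi(a)\eta\rangle
\]
shows $P_H\pi(a)\vert_{H^\perp}\neq 0$, so $H$ is not reducing and the compression $\sigma=P_H\pi(\cdot)\vert_H$ is a non-maximal representation of $\A$. The non-unital case follows by unitizing and choosing $a\in\A$ so that the same off-diagonal block is witnessed already on $\A$. The essential idea you are missing is this use of Hahn--Banach separation of $a^\ast$ from $\A$ to manufacture the non-reducing cyclic subspace.
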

\begin{proof}
Assume first that $\A$ is unital. Throughout this proof, we will consider $\A$ as a subalgebra of its C*-envelope $C^*_e(\A)$.
By hypothesis, there is an element $a\in \A$ such that $a^*\notin \A$.
The Hahn-Banach Separation Theorem gives that there is a linear functional $\rho$ on $C^*_e(\A)$ such that $\|\rho\|=1$, $\rho(a^*) \neq 0$ and $\rho(\A)=0$.

Now, by \cite[Theorem 8.4]{Paulsen} there exists a $*$-homomorphism $\pi: C^*_e(\A) \rightarrow B(K)$ and vectors $\xi,\eta\in K$ such that $\|\xi\| = \|\eta\|=1$ and 
\[
\rho(c) \ = \ \langle \pi(c)\xi,\eta\rangle.
\]
Let $H = \overline{\pi(\A)\xi}$ which is a closed subspace of $K$. Since $0 = \rho(\A)$ and because $\A$ is unital, $\eta \in H^\perp, \xi\in H$ and $H \neq K$.
Thus, $\sigma = P_H \pi(\cdot)|_H$ is a completely contractive representation of $\A$, by compression to an invariant subspace. However, 
\[
0\neq \rho(a^*) = \langle \pi(a^*)\xi,\eta\rangle = \langle \xi, \pi(a)\eta\rangle
\]
which implies that $P_H\pi(\A)|_{H^\perp} \neq 0$. Hence, $\sigma$ is a representation that is not extremal by extensions and so not a maximal representation.

Now if $\A$ is nonunital, then do the above argument for its unitization, $\A^1$, choosing $a\in \A$ such that $a^*\notin \A$ (this doesn't actually matter but it is easier to see the non-maximality below). This still gives that $\sigma$ is a completely contractive representation of $A$ by restriction from the unitization. Moreover, $\sigma$ is still not maximal.
\end{proof}

Recall that an operator algebra $\A$ is called {\em hyperrigid} if whenever $\pi:C^*_e(\A) \rightarrow B(H)$ is a $*$-homomorphism then $\pi$ is the only possible way to extend $\pi|_\A$ to a completely positive, completely contractive map on $C^*_e(\A)$. Dirichlet operator algebras, $\overline{\A + \A^*} = C^*_e(\A)$, are hyperrigid and the second author and Katsoulis \cite{KatRamHyper} have developed conditions for the hyperrigidity of tensor algebras of C$^*$-correspondences, providing many non-Dirichlet hyperrigid examples.

\begin{proposition}
If $\A$ is a hyperrigid non-selfadjoint operator algebra, then $\cstarlattice(\A)$ is not just a single point.
\end{proposition}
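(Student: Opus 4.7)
The plan is a short proof by contradiction combining Propositions \ref{prop:one_point_equivalence} and \ref{prop:non_maximal_representation} with the defining property of hyperrigidity. Suppose, towards a contradiction, that $\cstarlattice(\A)$ consists of a single point. By Proposition \ref{prop:one_point_equivalence}, the canonical C*-cover morphism
\[
q:C^\ast_{\max}(\A)\to C^\ast_e(\A)
\]
is a $\ast$-isomorphism, so I will identify these two C*-algebras via $q$.

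Since $\A$ is non-selfadjoint, Proposition \ref{prop:non_maximal_representation} produces a completely contractive representation $\sigma:\A\to B(H)$ which is \emph{not} maximal. The universal property of $C^\ast_{\max}(\A)$ extends $\sigma$ uniquely to a $\ast$-homomorphism $\tilde\sigma:C^\ast_{\max}(\A)\to B(H)$, and under the identification $C^\ast_{\max}(\A)=C^\ast_e(\A)$ this becomes a $\ast$-representation $\tilde\sigma:C^\ast_e(\A)\to B(H)$ with $\tilde\sigma|_\A=\sigma$.

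Now I invoke the hypothesis: because $\A$ is hyperrigid, $\tilde\sigma$ is the \emph{only} completely contractive completely positive extension of $\sigma$ to $C^\ast_e(\A)$. In other words, $\sigma$ has the unique extension property. By the Dritschel--McCullough characterization \cite{DriMcc} (applied in the non-unital case through the treatment in \cite{DOS}), the unique extension property is equivalent to $\sigma$ being a maximal representation. This contradicts the non-maximality of $\sigma$ given by Proposition \ref{prop:non_maximal_representation}, completing the proof.

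The argument is essentially routine once the pieces are lined up; the only point requiring a little care is the equivalence between the unique extension property and maximality for the potentially non-unital algebra $\A$, which is the reason the excerpt cites \cite{DOS} alongside \cite{DriMcc}. Meyer's unitization results, already used earlier in the paper, ensure nothing is lost in passing between $\A$ and $\A^1$ here.
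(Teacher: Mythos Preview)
Your proof is correct and follows essentially the same approach as the paper: both combine Proposition~\ref{prop:non_maximal_representation} with hyperrigidity to derive a contradiction from the non-maximal representation having the unique extension property. The only cosmetic difference is that you invoke the equivalence ``UEP $\Leftrightarrow$ maximal'' as a black box via \cite{DriMcc,DOS}, whereas the paper unpacks this implication by hand---dilating $\rho$ to a maximal $\pi$ and observing that the compression $P_H\pi(\cdot)\vert_H$ is a second ccp extension of $\rho$ that is not a $\ast$-homomorphism---before concluding that $\rho$ cannot extend to a $\ast$-homomorphism of $C^\ast_e(\A)$.
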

\begin{proof}
By Proposition \ref{prop:non_maximal_representation}, there exists a non-maximal representation $\rho$ of $\A$ on $B(H)$. This dilates to a maximal representation $\pi$ which extends to a $*$-homomorphism of $C^*_e(\A)$. 
This implies that $P_H \pi(\cdot)|_H$ is a completely positive, completely contractive map of $C^*_e(\A)$ that is not a $*$-homomorphism, or else the space $H$ would be a reducing subspace for $\rho$ and $\rho$ would be maximal.
Thus, $\rho$ cannot extend to a $*$-homomorphism since $\A$ is hyperrigid. Therefore, $C^*_e(\A)$ does not enjoy the universal property of $\Cmax(\A)$.
\end{proof}

This approach does not work in general since there are examples of non-maximal representations that extend to $*$-homomorphisms of the C$^*$-envelope.

\begin{example}
Consider the semicrossed product operator algebra $C([0,1]) \rtimes_\alpha \mathbb Z^+$ where $\alpha(x) = 0$ for all $x\in[0,1]$.

By \cite[Theorem 3.3]{KatRamHyper} since $\alpha([0,1]) = \{0\}$ is not contained in the closure of its interior, then $C([0,1]) \rtimes_\alpha \mathbb Z^+$ is not hyperrigid. Moreover, by \cite[Section 3]{KatRamHyper} we know that Katsura's ideal is obtained from the dynamics
\[J_X = C_0([0,1]\setminus \overline{([0,1] \setminus \alpha([0,1]))^\circ}) = C_0([0,1]\setminus \overline{(0,1]}) = \{0\}
\]
and so $C^*_e(C([0,1]) \rtimes_\alpha \mathbb Z^+)$, the Cuntz-Pimnser algebra, is $*$-isomorphic to the Toeplitz-Cuntz-Pimsner algebra (sometimes just called the Toeplitz algebra). This is the C$^*$-algebra generated by all orbit representations
\[
\sigma_x(f) = \left[\begin{matrix} f(x) \\ &f(0) \\ &&f(0) \\ &&&\ddots\end{matrix}\right] \quad \textrm{and}  \quad S =
\left[\begin{matrix} 0 \\ 1&0 \\ &1&0 \\ &&\ddots&\ddots \end{matrix}\right]
\]
Hence,
\[
C^*_e(C([0,1]) \rtimes_\alpha \mathbb Z^+)\ \  \simeq \ \ 
C^*\left(\left(\bigoplus_{x\in [0,1]} \sigma_x\right)\Big(C([0,1])\Big), \left(\bigoplus_{x\in [0,1]} S\right)  \right)
\]

Now, compression to the $0$-orbit, given by $C^*(\sigma_0(C([0,1])), S) \simeq \T$ (where $\T$ is the classical Toeplitz algebra),
is a completely contractive representation of $C([0,1]) \rtimes_\alpha \mathbb Z^+$, onto $A(\mathbb D)$, and a $*$-homomorphism of its C$^*$-envelope.
But notice that this representation on the semicrossed product algebra non-trivially dilates to
\[
f \mapsto\left[\begin{matrix} \ddots \\ &f(0) \\ &&f(0) \\ &&&\ddots\end{matrix}\right] \quad \textrm{and} \quad S \mapsto 
\left[\begin{matrix} \ddots \\ \ddots&0 \\ &1&0 \\ &&\ddots&\ddots \end{matrix}\right]
\]
which also extends to a $*$-homomorphism of the C$^*$-envelope, since this is just $\T \rightarrow C(\mathbb T)$. Therefore, a non-maximal representation of a non-hyperrigid operator algebra can still possibly extend to a $*$-homomorphism of the C$^*$-envelope.
\end{example}

Besides hyperrigidity, we also have the same result as the previous proposition for operator algebras with finite-dimensional C$^*$-envelopes.


\begin{proposition}
If $\A$ is a non-selfadjoint operator algebra whose C$^*$-envelope is finite-dimensional, then $\cstarlattice(\A)$ is not just a single point.
\end{proposition}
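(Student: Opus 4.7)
My plan is to argue by contradiction, combining Proposition \ref{prop:one_point_equivalence}(3) with Proposition \ref{prop:non_maximal_representation} and the structure of $*$-representations of a finite-dimensional C*-algebra. Suppose $\cstarlattice(\A)$ is a single point, so by Proposition \ref{prop:one_point_equivalence}(3) every completely contractive representation $\rho:\A\to B(H)$ extends to a $*$-representation of $\fC:=C^*_e(\A)$ on $H$. Since $\A$ is non-selfadjoint, Proposition \ref{prop:non_maximal_representation} produces a non-maximal completely contractive representation $\rho$, which therefore extends to some $*$-representation $\widetilde{\rho}:\fC\to B(H)$.

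Next I would exploit the finite-dimensional structure of $\fC$. Writing $\fC\cong\bigoplus_i M_{n_i}$, decompose $\widetilde{\rho}=\bigoplus_\alpha \pi_{i(\alpha)}$ into irreducible summands, so $\rho=\widetilde{\rho}|_\A=\bigoplus_\alpha \pi_{i(\alpha)}|_\A$. Because $\fC=C^*_e(\A)$, each $\pi_i|_\A$ is a boundary representation of $\A$ and so enjoys the unique extension property. The goal is then to show $\rho$ itself has the UEP relative to $\fC$, whence $\rho$ is maximal—contradicting the choice of $\rho$.

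The key obstacle is verifying the UEP for the direct sum. Given any ucp extension $\sigma:\fC\to B(H)$ of $\rho$, write $\sigma=[\sigma_{\alpha\beta}]$ in block form relative to the decomposition $H=\bigoplus H_\alpha$. Each diagonal block $\sigma_{\alpha\alpha}$ is a ucp extension of the boundary representation $\pi_{i(\alpha)}|_\A$ and so equals $\pi_{i(\alpha)}$ by UEP. For the off-diagonal blocks, the minimal Stinespring dilation realizes $\sigma_{\alpha\beta}(c)=V_\alpha^*\pi(c)V_\beta$, and the fact that $\sigma_{\alpha\alpha}$ and $\sigma_{\beta\beta}$ are already $*$-representations forces $V_\alpha, V_\beta$ onto reducing subspaces of $\pi$; Schur's lemma then implies $\sigma_{\alpha\beta}$ is either zero (for inequivalent irreducibles) or a scalar multiple of an isomorphism intertwining $\pi_{i(\alpha)}$ and $\pi_{i(\beta)}$ (for equivalent ones). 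In the latter case, the condition $\sigma_{\alpha\beta}|_\A=0$ combined with the non-vanishing of each boundary representation on $\A$ forces the scalar to be zero. Hence $\sigma=\widetilde{\rho}$ is the unique ucp extension, $\rho$ has the UEP, and $\rho$ is maximal, yielding the desired contradiction.
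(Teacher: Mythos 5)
The pivotal unproved step is your claim that, because $\fC=C^*_e(\A)$, each irreducible summand $\pi_i$ of $\fC\cong\bigoplus_i M_{n_i}$ restricts to a boundary representation of $\A$. As a general principle about C$^*$-envelopes this is false: in the paper's own semicrossed product example ($C([0,1])\rtimes_\alpha\bbZ^+$ with $\alpha\equiv 0$), the identity representation of the Toeplitz algebra $C^*_e$ is irreducible, yet its restriction to the algebra dilates nontrivially, so it is not a boundary representation. In the finite-dimensional case the claim is true, but the justification is not ``because $\fC=C^*_e(\A)$''; it requires the existence theorem for boundary representations (Arveson in the separable case, Davidson--Kennedy in general): boundary representations completely norm $\A$, so their joint kernel is a boundary ideal and hence zero in $C^*_e(\A)$, and since in $\bigoplus_i M_{n_i}$ the joint kernel of any family of irreducibles is a direct summand, every summand must carry a boundary representation. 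You neither prove nor cite this, and it is the heart of your argument: without it nothing forces the summands of $\widetilde{\rho}$ to have the unique extension property. (Smaller points: the extension of every completely contractive representation to $C^*_e(\A)$ comes from part (2) of Proposition \ref{prop:one_point_equivalence} together with the universal property of $\Cmax(\A)$, not part (3); and if $\A$ is non-unital, the decomposition of $\widetilde\rho$ into irreducibles and the equivalence of maximality with the UEP need the care taken in \cite{DOS}.)

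Your off-diagonal analysis is also shakier than necessary, though repairable: once each diagonal block of $\sigma$ is known to be a $*$-representation, compressing the Schwarz inequality $\sigma(c)^*\sigma(c)\le\sigma(c^*c)$ to each $H_\alpha$ forces all off-diagonal blocks to vanish, with no appeal to Stinespring ranges, Schur's lemma, or intertwiners; this is the standard proof that an arbitrary direct sum of representations with the UEP again has the UEP. Note finally that the paper's proof takes a much more elementary route: it never touches boundary representations or Proposition \ref{prop:non_maximal_representation}, but uses Burnside's theorem to produce a proper invariant subspace for a non-selfadjoint summand, giving a completely contractive representation that cannot extend to a $*$-homomorphism of the finite-dimensional envelope and thereby contradicting the universal property of $\Cmax(\A)$ directly. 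If you supply the boundary-representation ingredient with a proper citation and replace the Schur-lemma step by the Schwarz argument, your approach becomes a correct, though considerably heavier, alternative.
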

\begin{proof}
Suppose $C^*_e(\A) \simeq M_{n_1} \oplus \cdots \oplus M_{n_k}$. This implies that $\A = \A_1\oplus \cdots \oplus \A_k$ with $C^*(\A_i) = M_{n_i}$. 
Since $\A$ is non-selfadjoint then without loss of generality we can assume $\A_1$ is non-selfadjoint.

By Burnside's Theorem, there is a proper invariant subspace $K\subseteq\bbC^{n_1}$ for $\A_1$. Then $a\mapsto a\vert_K$ defines a completely contractive representation $\rho$ of $\A_1$ that does not extend to a $\ast$-homomorphism of $M_{n_1}$. So, $q\oplus\id\oplus \cdots\oplus \id$ is a completely contractive representation of $A$ that does not extend to the C$^*$-envelope. Therefore, the C$^*$-envelope does not have the universal property of the maximal C$^*$-cover and the lattice $\cstarlattice(\A)$ is not a single point.
\end{proof}



The situation is much clearer if we instead ask only for C$^*$-cover equivalence to a one-point lattice.

\begin{proposition}\label{prop:addallthecovers}
Let $\A$ be an operator algebra. Define 
\[
\fC_\A \ := \ \bigoplus_{[\fC,\iota] \in \cstarlattice(\A)}\bigoplus_{n=1}^\infty \ \fC.
\]
Then every C$^*$-algebra that is a C$^*$-cover of $\A\oplus \fC_\A$ is $*$-isomorphic to $\fC_\A$.  Moreover, every operator algebra is lattice isomorphic to an operator algebra which is C$^*$-cover equivalent to a C$^*$-algebra.
\end{proposition}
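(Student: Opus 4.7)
The plan is to combine Theorem \ref{thm:direct_sum_covers} with a standard absorption identity for countable direct sums of C*-algebras, and then apply Corollary \ref{cor:addacstaralgebra}. Since $\fC_\A$ is a C*-algebra, it is approximately unital and admits $(\fC_\A, \id_{\fC_\A})$ as its unique C*-cover. Hence Theorem \ref{thm:direct_sum_covers} applied to $\A \oplus \fC_\A$ yields that every C*-cover of $\A \oplus \fC_\A$ has the form $(\fD \oplus \fC_\A, \iota_\fD \oplus \id_{\fC_\A})$ for some C*-cover $(\fD, \iota_\fD)$ of $\A$.

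The key step is then to observe that $\fD \oplus \fC_\A \cong \fC_\A$ as C*-algebras for every class $[\fD, \iota_\fD] \in \cstarlattice(\A)$. Indeed, since $[\fD, \iota_\fD]$ appears as one of the indices in the definition of $\fC_\A$, the C*-algebra $\fC_\A$ contains the summand $\bigoplus_{n=1}^\infty \fD$. A shift-style reindexing gives the $\ast$-isomorphism
\[
\fD \oplus \bigoplus_{n=1}^\infty \fD \cong \bigoplus_{n=1}^\infty \fD,
\]
and combining this with the remaining summands shows $\fD \oplus \fC_\A \cong \fC_\A$. Together with the previous paragraph, this proves the first sentence of the proposition: every C*-algebra arising as a C*-cover of $\A \oplus \fC_\A$ is $\ast$-isomorphic to $\fC_\A$.

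For the second sentence, view $\fC_\A$ on its own as a C*-algebra; then it has only one C*-cover, namely itself, and the first sentence shows $\A \oplus \fC_\A$ and $\fC_\A$ are C*-cover equivalent. Finally Corollary \ref{cor:addacstaralgebra} supplies a lattice isomorphism $\cstarlattice(\A) \cong \cstarlattice(\A \oplus \fC_\A)$, so $\A$ is lattice isomorphic to the operator algebra $\A \oplus \fC_\A$, which is C*-cover equivalent to a C*-algebra. The only subtle point is the absorption identity for the countable direct sum of copies of $\fD$, which relies on using the $c_0$-direct sum convention for C*-algebras; given this, the identity $\fD \oplus c_0(\bbN, \fD) \cong c_0(\bbN, \fD)$ is a routine reindexing and presents no genuine obstacle.
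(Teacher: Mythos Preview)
Your argument is correct and follows essentially the same route as the paper: invoke Theorem~\ref{thm:direct_sum_covers} to split any C*-cover of $\A\oplus\fC_\A$ as $\fD\oplus\fC_\A$, then use the Hilbert-hotel absorption $\fD\oplus\bigoplus_{n=1}^\infty\fD\cong\bigoplus_{n=1}^\infty\fD$ to conclude $\fD\oplus\fC_\A\cong\fC_\A$. You are actually a bit more explicit than the paper, which simply asserts $\fC\oplus\fC_\A\simeq\fC_\A$ without spelling out the reindexing, and you also write out the ``Moreover'' clause via Corollary~\ref{cor:addacstaralgebra}, which the paper leaves implicit.
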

\begin{proof}
It should be first noted that $\fC_\A$ is properly defined since we know that $\cstarlattice(\A)$ is a set. 
By Theorem \ref{thm:direct_sum_covers} every C$^*$-cover $(\fD, \tilde\iota)$ of $\A\oplus \fC_\A$ is isomorphic to $(\fC\oplus \fC_\A, \iota\oplus\id_{\fC_\A})$ where $(\fC,\iota)$ is a C$^*$-cover of $\A$. Therefore, $\fD \simeq \fC\oplus \fC_\A \simeq \fC_\A$.
\end{proof}

\begin{corollary}
If $\A$ and $\B$ be operator algebras, then $\A$ is lattice isomorphic to $\A\oplus \fC_\A \oplus \fC_\B$ which is C$^*$-cover equivalent to $\B\oplus \fC_\A \oplus \fC_\B$ which is lattice isomorphic to $\B$.
\end{corollary}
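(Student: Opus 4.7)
The plan is to assemble this chain of three relations from the previously-established \textup{Corollary \ref{cor:addacstaralgebra}} and \textup{Theorem \ref{thm:direct_sum_covers}}, together with the absorption property of $\fC_\A$ that underlies \textup{Proposition \ref{prop:addallthecovers}}. The two outer lattice isomorphisms follow immediately from \textup{Corollary \ref{cor:addacstaralgebra}} applied with the C$^*$-algebra $\fC_\A \oplus \fC_\B$: since $\fC_\A \oplus \fC_\B$ is itself a C$^*$-algebra, that corollary gives that $\A$ is lattice isomorphic to $\A \oplus (\fC_\A \oplus \fC_\B)$, and symmetrically that $\B$ is lattice isomorphic to $\B \oplus (\fC_\A \oplus \fC_\B)$.

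The middle step is to show that $\A \oplus \fC_\A \oplus \fC_\B$ and $\B \oplus \fC_\A \oplus \fC_\B$ are C$^*$-cover equivalent. I would prove the stronger statement that each has, up to $*$-isomorphism of C$^*$-algebras, a unique C$^*$-cover, namely $\fC_\A \oplus \fC_\B$. For the first algebra, $\fC_\A \oplus \fC_\B$ is a C$^*$-algebra and hence approximately unital, so \textup{Theorem \ref{thm:direct_sum_covers}} applies: every C$^*$-cover of $\A \oplus (\fC_\A \oplus \fC_\B)$ has the form $(\fC \oplus \fC_\A \oplus \fC_\B, \iota \oplus \id)$ for some C$^*$-cover $(\fC,\iota)$ of $\A$. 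By the very definition of $\fC_\A$, it contains a direct summand $*$-isomorphic to $\bigoplus_{n=1}^{\infty} \fC$, so $\fC \oplus \fC_\A \cong \fC_\A$ as C$^*$-algebras, and therefore $\fC \oplus \fC_\A \oplus \fC_\B \cong \fC_\A \oplus \fC_\B$. The identical argument, using that $\fC_\B$ contains a copy of $\bigoplus_{n=1}^\infty \fD$ for every C$^*$-cover $\fD$ of $\B$, handles $\B \oplus \fC_\A \oplus \fC_\B$. Since both algebras then have the same collection of C$^*$-covers up to $*$-isomorphism (just the single class of $\fC_\A \oplus \fC_\B$), the definition of C$^*$-cover equivalence is satisfied.

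No serious obstacle is anticipated; the proof is essentially a bookkeeping exercise assembling results already in hand. The only point requiring verification is the applicability of \textup{Theorem \ref{thm:direct_sum_covers}}, which is immediate because $\fC_\A \oplus \fC_\B$ is a C$^*$-algebra and so approximately unital.
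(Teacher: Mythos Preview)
Your proposal is correct and follows essentially the same approach as the paper: the outer lattice isomorphisms come from Corollary~\ref{cor:addacstaralgebra}, and the middle C$^*$-cover equivalence comes from the absorption argument underlying Proposition~\ref{prop:addallthecovers} combined with Theorem~\ref{thm:direct_sum_covers}. The only difference is cosmetic: the paper cites Proposition~\ref{prop:addallthecovers} directly (applied to each of $\A\oplus\fC_\A$ and $\B\oplus\fC_\B$, then direct-summed with the remaining C$^*$-algebra via Theorem~\ref{thm:direct_sum_covers}), whereas you unpack the absorption $\fC\oplus\fC_\A\cong\fC_\A$ explicitly.
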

\begin{proof}
This follows immediately from the previous proposition and  Corollary \ref{cor:addacstaralgebra}.
\end{proof}

\begin{corollary}\label{cor:makestariso}
Let $\A$ and $\B$ be operator algebras. If $\A$ is lattice isomorphic to $\B$ then $\A\oplus \fC_\A \oplus \fC_\B$ is lattice $*$-isomorphic to $\B\oplus \fC_\A \oplus \fC_\B$.
\end{corollary}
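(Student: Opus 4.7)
The plan is to leverage the C$^*$-absorption property built into the definitions of $\fC_\A$ and $\fC_\B$ together with the direct sum decomposition for C$^*$-covers from Theorem \ref{thm:direct_sum_covers}. The key observation I will exploit is that \emph{every} C$^*$-cover of $\A\oplus \fC_\A\oplus \fC_\B$ (and likewise of $\B\oplus \fC_\A\oplus \fC_\B$) has the same underlying C$^*$-algebra, up to $\ast$-isomorphism, namely $\fC_\A\oplus \fC_\B$. Once this is established, any lattice isomorphism between these two C$^*$-cover lattices is automatically a lattice $\ast$-isomorphism, and we already know how to produce such a lattice isomorphism from the given lattice isomorphism $\cstarlattice(\A)\cong \cstarlattice(\B)$ by way of Corollary \ref{cor:addacstaralgebra} (applied twice to absorb both C$^*$-algebra summands).

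In detail, I would first apply Theorem \ref{thm:direct_sum_covers} iteratively, using that $\fC_\A\oplus \fC_\B$ is a C$^*$-algebra (hence approximately unital, with a single-point C$^*$-cover lattice), to identify
\[
\cstarlattice(\A\oplus \fC_\A\oplus \fC_\B)\;\cong\;\cstarlattice(\A),\qquad
\cstarlattice(\B\oplus \fC_\A\oplus \fC_\B)\;\cong\;\cstarlattice(\B),
\]
and conclude via the hypothesis that these are lattice isomorphic. Concretely, under these identifications every C$^*$-cover of $\A\oplus \fC_\A\oplus \fC_\B$ has a representative of the form $(\fC\oplus \fC_\A\oplus \fC_\B,\,\iota\oplus \id_{\fC_\A}\oplus \id_{\fC_\B})$ for some C$^*$-cover $(\fC,\iota)$ of $\A$.

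Next, the main (but easy) step is to observe the absorption $\fC\oplus \fC_\A\cong \fC_\A$, which holds because the definition of $\fC_\A$ already contains a countably infinite direct sum of copies of each C$^*$-cover of $\A$. Thus $\fC\oplus \fC_\A\oplus \fC_\B\cong \fC_\A\oplus \fC_\B$ as C$^*$-algebras, regardless of which C$^*$-cover $\fC$ of $\A$ we started with. A mirror argument on the $\B$ side, using that a C$^*$-cover of $\B\oplus \fC_\A\oplus \fC_\B$ has the form $\fE\oplus \fC_\A\oplus \fC_\B$ with $\fE$ a C$^*$-cover of $\B$, gives the same conclusion: the underlying C$^*$-algebra is always $\ast$-isomorphic to $\fC_\A\oplus \fC_\B$.

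Finally, combining these pieces: let $F:\cstarlattice(\A\oplus \fC_\A\oplus \fC_\B)\to \cstarlattice(\B\oplus \fC_\A\oplus \fC_\B)$ be the lattice isomorphism obtained by composing the three identifications above with the hypothesized lattice isomorphism $\cstarlattice(\A)\cong \cstarlattice(\B)$. For any class $[\fD,\eta]$ on the left, both $\fD$ and the underlying C$^*$-algebra of $F[\fD,\eta]$ are $\ast$-isomorphic to $\fC_\A\oplus \fC_\B$ by the previous paragraph, hence $\ast$-isomorphic to each other. This verifies condition (2) of Definition \ref{def:sameness}, so $F$ is a lattice $\ast$-isomorphism. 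I do not anticipate any real obstacle; the entire argument is a routine bookkeeping exercise once the absorption $\fC\oplus \fC_\A\cong \fC_\A$ is noted, and the only mild care required is to apply Theorem \ref{thm:direct_sum_covers} with the approximately unital hypothesis satisfied by the C$^*$-summands.
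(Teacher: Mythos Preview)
Your proposal is correct and matches precisely the approach the paper leaves implicit: the corollary is stated without proof, relying on the immediately preceding Proposition \ref{prop:addallthecovers} (for the absorption $\fC\oplus \fC_\A\cong \fC_\A$) and Corollary \ref{cor:addacstaralgebra} (for the lattice isomorphism after adjoining a C*-summand), exactly as you have unpacked it. There is nothing to add.
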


It is now easy to see that C$^*$-cover equivalence does not imply lattice isomorphism, and so they are unrelated. For instance, $\T_2 \oplus \fC_{\T_2}$ and $\fC_{\T_2}$ are C$^*$-cover equivalent but not lattice isomorphic.

\begin{theorem}\label{thm:ncstarcovers}
For any $n\in\mathbb N$ there is an operator algebra which generates exactly $n$ distinct C$^*$-algebras up to $*$-isomorphism.
\end{theorem}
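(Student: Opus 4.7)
For the base case $n = 1$, take $\mathcal{A}_1$ to be any C*-algebra, for instance $\mathcal{A}_1 = \mathbb{C}$. Every completely contractive homomorphism out of a C*-algebra is automatically a $\ast$-homomorphism, so a C*-algebra has only itself as a C*-cover; in particular $\mathcal{A}_1$ generates exactly one C*-algebra up to $\ast$-isomorphism.

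For $n \ge 2$, the plan is to exhibit an explicit finite-dimensional operator algebra $\mathcal{A}_n$ and to count its C*-covers. Restricting to finite-dimensional operator algebras is natural here because C*-covers of finite-dimensional operator algebras are themselves finite-dimensional, and are therefore classified by Artin--Wedderburn as finite direct sums $M_{k_1}\oplus \cdots \oplus M_{k_m}$, which makes the enumeration of $\ast$-isomorphism classes combinatorial rather than infinite-dimensional analysis. A promising source of candidates is subalgebras of some matrix algebra $M_N$ --- for instance, block-diagonal augmentations of the upper triangular matrices $\mathcal{T}_k$, or carefully chosen nilpotent subalgebras --- whose completely isometric representations are tightly controlled by dilation theory.

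The argument then proceeds in two stages. \emph{Lower bound:} Exhibit $n$ completely isometric representations $\iota_1, \dots, \iota_n$ of $\mathcal{A}_n$ whose generated C*-algebras $C^\ast(\iota_k(\mathcal{A}_n))$ are pairwise non-$\ast$-isomorphic. A natural way to produce such representations is to take the identity embedding of $\mathcal{A}_n$ into its C*-envelope and direct sum it with auxiliary completely contractive representations (such as diagonal compressions, Schur-multiplier perturbations, or block corner embeddings), yielding C*-algebras with distinct Wedderburn decompositions distinguishable by dimension or by the multiplicities of irreducible summands. \emph{Upper bound:} Show that every completely isometric representation of $\mathcal{A}_n$ produces a C*-algebra $\ast$-isomorphic to one of those $n$. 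Here one identifies every C*-cover with a quotient $C^\ast_{\max}(\mathcal{A}_n)/J$ by a boundary ideal $J$, and uses Arveson's dilation theorem together with the rigid structure of $\mathcal{A}_n$ to classify those quotients up to $\ast$-isomorphism.

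The main obstacle is the upper bound. The lower bound is a bookkeeping exercise once a good $\mathcal{A}_n$ is in hand, but ensuring that no extra $\ast$-isomorphism class of C*-covers appears requires a detailed understanding of the ideal lattice of $C^\ast_{\max}(\mathcal{A}_n)$ modulo boundary ideals, and of the possible completely isometric representations of $\mathcal{A}_n$ up to the equivalence of generating $\ast$-isomorphic C*-algebras. The construction must therefore be designed so that only finitely many, and in fact exactly $n$, isomorphism types of quotients arise --- a rigidity condition that is the crux of the theorem. Once such an $\mathcal{A}_n$ is produced for every $n$, the theorem follows.
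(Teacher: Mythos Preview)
Your plan rests on a false premise: it is \emph{not} true that C*-covers of a finite-dimensional operator algebra are finite-dimensional. Already for the one-dimensional nilpotent algebra $\A=\bbC E_{12}\subset M_2$, Loring's computation gives $\Cmax(\A)\cong M_2\otimes C_0((0,1])$, and for $\T_2$ one has $\Cmax(\T_2)\cong\{f\in M_2(C([0,1])):f(0)\text{ diagonal}\}$, both infinite-dimensional. In fact a finite-dimensional non-selfadjoint operator algebra typically has uncountably many pairwise non-isomorphic C*-covers, so there is no Artin--Wedderburn bookkeeping available, and the ``upper bound'' step you flag as the crux cannot even get started along these lines. No choice of a finite-dimensional $\A_n$ will have the rigidity you are hoping for.

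The paper's argument avoids this entirely by an absorption trick rather than by controlling a small algebra. For the nilpotent $\A=\bbC E_{12}$ one forms the C*-algebra $\fC_\A=\bigoplus_{[\fC,\iota]\in\cstarlattice(\A)}\bigoplus_{k\ge 1}\fC$, which absorbs every C*-cover of $\A$: any C*-cover of $\A\oplus\fC_\A$ is $\fC\oplus\fC_\A\cong\fC_\A$. Replacing $\A$ by its unitization $\A^1$ introduces exactly one new possibility, an extra $\bbC$ summand, because $\A$ has no nonzero multiplicative functionals and hence $\bbC$ cannot be absorbed into $\fC_\A$. Thus $\A^1\oplus\fC_\A$ has exactly two C*-covers up to $\ast$-isomorphism, namely $\fC_\A$ and $\fC_\A\oplus\bbC$, and taking the $(n-1)$-fold direct sum $\bigoplus_{k=1}^{n-1}(\A^1\oplus\fC_\A)$ yields exactly the $n$ algebras $\fC_\A,\fC_\A\oplus\bbC,\ldots,\fC_\A\oplus\bbC^{n-1}$. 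The point is that the construction is designed so that all the uncontrollable variation in the C*-cover lattice gets swallowed by $\fC_\A$, leaving only a finite combinatorial residue.
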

\begin{proof}
The case $n=1$ is dispensed with by a C$^*$-algebra (or the previous proposition).

For $n = 2$, consider the operator algebra $\A = \left\{\left[\begin{matrix} 0&b \\ 0&0\end{matrix}\right] : b\in \mathbb C\right\}$ and its unitization $\A^1$. Loring in \cite{Loring} (cf. \cite[Example 2.4]{Blech}) proved that 
\[
\Cmax(\A) \simeq M_2\otimes C((0,1])
\]
and so the unitization satisfies
\[
\Cmax(\A^1) \simeq \{f\in M_2\otimes C([0,1]) : f(0) \in \mathbb CI\} \simeq \Cmax(\A) \oplus \mathbb C.
\]
Additionally, $M_2$ is a C$^*$-cover of both $\A^1$ and $\A$. Then $\C_\A$ is a C$^*$-cover of $\A^1\oplus \C_\A$.

$\A$ has no nonzero multiplicative linear functionals since it is made up of nilpotents. 
Thus, there is no C$^*$-cover that is $*$-isomorphic to a C$^*$-algebra that has $\mathbb C$ as a direct summand. Hence, using the C$^*$-algebra construction defined in the previous proposition, $\A^1 \oplus \fC_\A$ generates exactly two non-$*$-isomorphic C$^*$-algebras: $\fC_\A$ and $\fC_\A \oplus \mathbb C$.

For $n\geq 3$, by Theorem \ref{thm:direct_sum_covers} we know that $\bigoplus_{k=1}^{n-1} (\A^1 \oplus \fC_\A)$ generates $n$ non-$*$-isomorphic C$^*$-algebras
\[
\fC_\A, \ \  \fC_\A \oplus \mathbb C,\ \ \dots, \ \ \fC_\A \oplus \mathbb C^{n-1}
\]
since $\fC_\A \oplus \dots \oplus \fC_\A \simeq \fC_\A$.
\end{proof}

Buried in the proof of this theorem is the fact, developed by Thompson \cite[Theorem 3.1 and Example 3]{Thompson}, that the topology of C$^*$-covers need not be Hausdorff.

\section{Tensor Products}

Tensor products of operator algebras have played a significant role in C$^*$-algebras and von Neumann algebras. This is largely because many aspects are well-behaved and have lead to a rich theory while on the other hand there are many significant open questions that drive to the very core of the area. It should come as no surprise then that in the non-selfadjoint setting much can still be proven while many aspects remain mysterious. Excellent sources for further reading about tensor products are Brown and Ozawa \cite[Chapter 3]{BrownOzawa} in the selfadjoint setting and Blecher and Le Merdy \cite[Chapter 6]{Blecher} in the non-selfadjoint setting.

As is customary, $\odot$ will always denote the algebraic tensor product and $\otimes_{\max}$ and $\otimes_{\min}$ will denote the maximal and minimal operator algebra or C*-tensor products. In the case where $\A$ is an approximately unital operator algebra and $\fB$ is a nuclear C*-algebra, then $\A\otimes_{\max} \fB=\A\otimes_{\min} \fB$ \cite[Proposition 6.1.14]{Blecher}, and then one writes $\A\otimes \fB$.  

If $\A$ and $\B$ are approximately unital operator algebras, every completely contractive homomorphism
\[
\pi:\A\otimes_{\max}\B \to B(H)
\]
is of the form $\pi=\pi_\A\cdot \pi_\B$, where $\pi_\A:\A\to B(H)$ and $\pi_\B:\B\to B(H)$ are completely contractive homomorphisms with commuting ranges and
\[
(\pi_\A\cdot \pi_\B)(a\otimes b)=\pi_\A(a)\pi_\B(b)
\]
on pure tensors $a\otimes b\in \A\odot \B$ \cite[Corollary 6.1.7]{Blecher}.

We will need the following result of Murray and von Neumann.

\begin{lemma}\cite[Proposition IV.4.20]{Takesaki}\label{lem:factor_tensor}
If $\M\subseteq B(H)$ is a factor, then the multiplication map
\begin{align*}
\M'\odot \M &\to B(H) \\
a\otimes b&\mapsto ab
\end{align*}
is injective.
\end{lemma}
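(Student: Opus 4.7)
The plan is to show the multiplication map $m \colon \M'\odot \M \to B(H)$, $a\otimes b \mapsto ab$, is injective. Given an element $\sum_{i=1}^n a_i\otimes b_i$ in its kernel, I would first rewrite it so that $\{b_1,\dots,b_n\}\subseteq \M$ is linearly independent---a standard reduction obtained by selecting a basis of the span of the $b_i$ and re-expressing each $a_i$ accordingly. The problem then reduces to proving that $\sum_i a_i b_i = 0$ with $a_i\in \M'$ and $\{b_i\}$ linearly independent forces every $a_i=0$.

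The key algebraic identity to exploit is commutation: since every $a_i\in \M'$ commutes with every element of $\M$, for any $c,d\in \M$,
\[
0 \;=\; c\Bigl(\sum_i a_i b_i\Bigr)d \;=\; \sum_i a_i\,(c\, b_i\, d).
\]
So the family $\{cb_1d,\dots,cb_nd\}$ satisfies the same annihilation relation against the $a_i$'s. Separately, linear independence of $\{b_i\}$ in the von Neumann algebra $\M$ yields normal functionals $\omega_j\in \M_*$ with $\omega_j(b_i)=\delta_{ij}$, each realisable via polarisation as a finite linear combination of vector states $c\mapsto \langle c\xi,\eta\rangle$. The strategy is to feed such $\omega_j$ into the relations $\sum_i a_i(cb_id)=0$ and sum over suitable choices of $c,d\in \M$ so as to extract each $a_j$ on the nose.

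The principal obstacle is that the slice map $\id\otimes \omega_j$ is a priori well-defined only on the algebraic tensor $\M'\odot \M$, not on its image in $B(H)$---well-definedness on the image is essentially the injectivity we are trying to establish. The factor hypothesis $\M\cap \M'=\bbC I$ is precisely what unlocks this: by the double commutant theorem, $(\M\cup \M')''=B(H)$, so the two-sided $\M$-module action on $\M$ is rich enough to algebraically separate linearly independent elements. Concretely, for each index $j$ I would aim to produce a finite list of $c_k,d_k\in \M$ and scalars $\lambda_k$ such that $\sum_k \lambda_k c_k b_i d_k$ acts like $\delta_{ij}I$ against the chosen vector states; substituting back into $\sum_i a_i\bigl(\sum_k \lambda_k c_k b_i d_k\bigr)=0$ then forces $a_j=0$. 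Turning the density $(\M\cup \M')''=B(H)$ into an explicit \emph{finite} separation of the finite-dimensional family $\{b_1,\dots,b_n\}$---or, alternatively, invoking the commutation theorem for factors to make the slice $\id\otimes \omega_j$ descend to the image---is where the main technical content lies.
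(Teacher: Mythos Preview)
The paper does not supply a proof of this lemma; it is quoted as \cite[Proposition IV.4.20]{Takesaki} and used as a black box, so there is no in-paper argument to compare your attempt against.

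On its own merits, your outline has a genuine gap exactly where you flag it. You want finitely many $c_k,d_k\in\M$ and scalars $\lambda_k$ such that $\sum_k\lambda_k c_k b_i d_k$ equals (or ``acts like'') $\delta_{ij}I$, and then substitute into $\sum_i a_i\bigl(\sum_k\lambda_k c_k b_i d_k\bigr)=0$ to isolate $a_j$. Taken literally, this asks that a two-sided multiplication map $x\mapsto\sum_k\lambda_k c_k x d_k$ on $\M$ realise the coordinate functionals on $\spn\{b_1,\dots,b_n\}$ with values in $\bbC I$; there is no reason a general factor admits such elements (nothing forces the image of such a map to be scalar, for instance). Your weaker reading---that $\sum_k\lambda_k c_k b_i d_k$ need only pair to $\delta_{ij}$ against fixed vector states---does not close the argument either: after pairing you have lost the operator identity needed to conclude $a_j=0$. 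The density $(\M\cup\M')''=B(H)$ is correct but does not by itself manufacture separating elements of this kind inside $\M$, and your alternative of ``invoking the commutation theorem'' to make the slice descend is not a proof as written.

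The standard argument uses the factor hypothesis differently, via central supports. One takes the $a_i\in\M'$ to be linearly independent instead, notes that $\M'$ is itself a factor, and shows (by induction, or by an amplification to $H^n$) that no nontrivial relation $\sum_i a_i b_i=0$ can hold with $b_i\in(\M')'=\M$. Already the base case $ab=0\Rightarrow a=0$ or $b=0$ shows the mechanism: the range projection of $b$ lies in $\M$ and has central support $I$ because $\M$ is a factor, which forces any orthogonal projection in $\M'$ annihilating it to vanish. That central-support idea---rather than an algebraic separation by two-sided multiplication---is what is missing from your sketch.
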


This has the following C*-algebraic version, which is known but not written in this explicit format. We include our proof for completeness, but note that there are other ways to achieve this, e.g. pure state excision.

\begin{proposition}\label{prop:simple_tensor_injective}
If $\fC,\fB\subseteq B(H)$ are commuting C*-algebras, and $\fB$ is simple, then the multiplication map
\begin{align*}
\mu:\fC\odot \fB &\to B(H) \\
c\otimes b &\mapsto cb
\end{align*}
is injective. In particular, there is an embedding $\fB'\odot \fB\hookrightarrow B(H)$.
\end{proposition}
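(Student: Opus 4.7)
The plan is to reduce to the factor case of Lemma~\ref{lem:factor_tensor}. Suppose $x=\sum_{i=1}^n c_i\otimes b_i\in \ker \mu$; after shortening I may assume the $b_i$ are linearly independent in $\fB$, so it suffices to show $c_i=0$ for each $i$. I would first replace $H$ by $\overline{\fB H}$, which is $\fC$-invariant since $\fC$ commutes with $\fB$, to assume $\fB$ acts nondegenerately on $H$.

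When $\fB''$ happens to be a factor the proposition is immediate, since
\[
\fC\odot \fB\hookrightarrow \fB'\odot \fB''\hookrightarrow B(H),
\]
with the last inclusion furnished by Lemma~\ref{lem:factor_tensor}. In general, I would reduce to this case using the center $Z=Z(\fB'')=\fB''\cap \fB'$, whose elements commute with both $\fB$ and $\fC\subseteq \fB'$. For any nonzero central projection $p\in Z$, the $\ast$-homomorphism $\fB\to p\fB$, $b\mapsto bp$, has an ideal kernel that by simplicity is either $0$ or $\fB$; nondegeneracy rules out the latter, so $pb_1,\ldots,pb_n$ remain linearly independent. On $pH$ one has $p\fC\subseteq (p\fB'')'$ (since $c\in\fC\subseteq\fB'=(\fB'')'$ commutes with every element of $\fB''$) and the compressed identity $\sum (pc_i)(pb_i)=p\mu(x)p=0$, so if $p\fB''$ is a factor on $pH$ then applying Lemma~\ref{lem:factor_tensor} to it forces $pc_i=0$ for every $i$.

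The main obstacle, and the crux of the argument, is producing enough such ``factorial'' central projections to conclude $c_i=0$. In the separable setting I would invoke the direct-integral disintegration $\fB''=\int^{\oplus}_{\widehat Z}\fB''_\omega\,d\mu(\omega)$ over the spectrum of $Z$: the fibers $\fB''_\omega$ are almost everywhere factors, simplicity of $\fB$ makes each nonzero fiber representation $b\mapsto b_\omega$ faithful, and the corner argument applied fiberwise yields $c_{i,\omega}=0$ almost everywhere, hence $c_i=0$. In full generality I would instead use a Zorn's lemma argument producing a maximal mutually orthogonal family of central projections $\{p_\alpha\}\subseteq Z$ with each $p_\alpha\fB''$ a factor on $p_\alpha H$, whose supremum must then be $I$, and apply the corner conclusion to each $p_\alpha$. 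Finally, the ``in particular'' embedding $\fB'\odot \fB\hookrightarrow B(H)$ is the special case $\fC=\fB'$.
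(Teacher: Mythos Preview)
Your reduction to the factor case via Lemma~\ref{lem:factor_tensor} is the right instinct, and the separable direct-integral argument can be made rigorous with enough care (one must check that elements of $\fC\subseteq\fB'$ are decomposable and land fiberwise in the commutant of the factor fibers, which is standard direct-integral theory). However, the Zorn's lemma step in the general case has a genuine gap. You assert that a maximal orthogonal family of central projections $\{p_\alpha\}\subseteq Z(\fB'')$ with each $p_\alpha\fB''$ a factor must have supremum $I$, but this fails whenever $Z(\fB'')$ has a diffuse part: in that case there are \emph{no} nonzero central projections $p$ with $p\fB''$ a factor, so the maximal family is empty and its supremum is $0$. Simplicity of $\fB$ does not prevent this; a simple C*-algebra can sit inside $B(H)$ with $\fB''$ having diffuse center (e.g.\ take a direct integral of mutually disjoint factor representations of a simple C*-algebra over a nonatomic measure space).

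The paper circumvents this by working not with the central decomposition of $\fB''$ on the given $H$, but with irreducible representations $\pi$ of the C*-algebra $\overline{\fC\fB}$. Irreducibility forces $\pi(\fC)'\cap\pi(\fB)'=\bbC I$, and from $\pi(\fC)\subseteq\pi(\fB)'$ one deduces that $\pi(\fB)''$ (and likewise $\pi(\fC)''$) is automatically a factor on $H_\pi$; then Lemma~\ref{lem:factor_tensor} applies in each irreducible piece, simplicity of $\fB$ makes $\pi\vert_\fB$ injective, and a separating family of such $\pi$ recovers injectivity of $\mu$. The point is that irreducible representations of $\overline{\fC\fB}$ always exist in abundance and always produce factors, whereas minimal central projections in $\fB''$ need not exist at all. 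If you want to salvage your approach without separability hypotheses, you would need to replace the factorial central projections by exactly this passage to irreducible representations.
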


\begin{proof}
Without loss of generality, by passing to $\fB+\bbC I$ and $\fC+\bbC I$, we will assume that $\fC$ and $\fB$ are unital. Note that while $\fB+\bbC I$ may no longer be simple, we will only use the fact that a unital $*$-homomorphism on $\fB+\bbC I$ that is nonzero on $\fB$ is automatically injective, which is true even when $\fB$ is nonunital and so a proper ideal in $\fB+\bbC I$.

Since $\fC$ and $\fB$ commute, they generate the C*-algebra
\[
C^\ast(\fC\cup \fB) = \overline{\fC\fB}\subseteq B(H).
\]
Let $\pi:\overline{\fC\fB}\to B(H_\pi)$ be any irreducible representation. Then, $\pi(\fC)$ and $\pi(\fB)$ are commuting C*-algebras with
\[
\pi(\fC)'\cap \pi(\fB)' = \pi(\overline{\fC\fB})' = \bbC I.
\]
It follows that both $\pi(\fC)''$ and $\pi(\fB)''$ are factors \cite[Proposition IV.4.21]{Takesaki}. Indeed, $\pi(\fC)\subseteq \pi(\fB)'$, and so $\pi(\fB)''\subseteq \pi(\fC)'$, implying
\[
\pi(\fB)''\cap \pi(\fB)' \subseteq
\pi(\fC)'\cap\pi(\fB)' =
\bbC I.
\]
Similarly, $\pi(\fC)''\cap \pi(\fC)'=\bbC I$. Therefore, Lemma \ref{lem:factor_tensor} applies with $\M=\pi(\fB)''$, and so the multiplication map
\[
\pi(\fC)\odot \pi(\fB) \to
\pi(\fC)''\odot \pi(\fB)''\subseteq
\pi(\fB)'\odot \pi(\fB)'' =
\M'\odot \M\to B(H_\pi)
\]
is injective. Because $\fB$ is simple, $\pi\vert_\fB$ is injective, and so
\[
\id_{\pi(\fC)}\otimes \pi\vert_\fB:\pi(\fC)\odot \fB\to \pi(\fC)\odot \pi(\fB)
\]
is injective. So, the diagram of $\ast$-homomorphisms
\[\begin{tikzcd}
    \fC\odot \fB \arrow[d,swap,"\mu"] \arrow[r,"\pi\vert_\fC\otimes \id_\fB"] & \pi(\fC)\odot \fB \arrow[d,hook,"\mu_\pi"] \\
    \overline{\fC\fB} \arrow[r,"\pi"] & B(H_\pi)
\end{tikzcd}\]
commutes, where
\[
\mu_\pi(\pi(c)\otimes b) =\pi(c)\pi(b)
\]
is an injective map.



Let $\Lambda$ be a set of irreducible representations $\pi:\overline{\fC\fB}\to B(H_\pi)$ that separates points of $\overline{\fC\fB}$. So, 
\[
\bigoplus_{\pi\in \Lambda} \pi\vert_\fC:\fC\to \prod_{\pi \in \Lambda}\pi(\fC)\subseteq \fB\left(\bigoplus_{\pi\in \Lambda}H_\pi\right)
\]
is an injective linear map. Since $\odot$ is injective,
\[
\left(\bigoplus_{\pi\in \Lambda} \pi\vert_\fC\right)\otimes\id_\fB: \fC\odot \fB\to \left(\prod_{\pi\in \Lambda} \pi\vert_\fC(\fC)\right)\odot \fB
\]
is also injective. Then, we have a commuting diagram
\[\begin{tikzcd}
    \fC\odot \fB \arrow[rr,"(\oplus_\pi \pi\vert_\fC)\otimes\id_\fB",hook] \arrow[d,"\mu",swap] & & (\prod_\pi \pi\vert_\fC(\fC))\odot \fB \arrow[r,hook] & \prod_\pi (\pi\vert_\fC(\fC)\odot \fB) \arrow[d,"\oplus_\pi \mu_\pi",hook] \\
    \overline{\fC\fB} \arrow[rrr,"\oplus_\pi \pi",hook] & & & \fB(\oplus_\pi H_\pi).
\end{tikzcd}\]
Here, the natural linear map
\begin{align*}
\left(\prod_\pi \pi\vert_\fC(\fC)\right)\odot \fB &\to \prod_\pi \Big(\pi\vert_\fC(\fC)\odot \fB\Big)\\
(a_\pi)_\pi \otimes b &\mapsto (a_\pi\otimes b)_\pi
\end{align*}
is injective. Since all other maps in this commuting diagram are injective, it follows that $\mu$ is injective.
\end{proof}

Now we turn to a technical result showing that tensor products of C$^*$-covers respect the lattice structure.

\begin{lemma}\label{lem:tensor_morphism}
Let $\A$ and $\B$ be operator algebras. Let $(\fC_1,\iota_1)$ and $(\fC_2,\iota_2)$ be C*-covers for $\A$, and let $(\fD_1,\eta_1)$ and $(\fD_2,\eta_2)$ be C*-covers for $\B$. Let $\alpha$ denote either ``$\max$" or ``$\min$". A $\ast$-homomorphism $\pi$ makes the diagram
\[\begin{tikzcd}
    \fC_1\otimes_\alpha \fD_1\arrow[rr,"\pi"] & & \fC_2\otimes_\alpha \fD_2\\
    \A\otimes_\alpha \B \arrow[u,"\iota_1\otimes\eta_1"] \arrow[urr,swap,"\iota_2\otimes\eta_2"]
\end{tikzcd}\]
commute if and only if it has the form $\pi_\A\otimes \pi_\B$ where
\[
\pi_\A:(\fC_1,\iota_1)\to (\fC_2,\iota_2) \quad\text{and}\quad
\pi_\B:(\fD_1,\eta_1)\to (\fD_2,\eta_2)
\]
are morphisms of C*-covers. Consequently, if $(\fC_1\otimes_\alpha \fD_1,\iota_1\otimes\eta_1)$ and $(\fC_2\otimes_\alpha \fD_2,\iota_2\otimes \eta_2)$ are C*-covers of $\A\otimes_\alpha \B$, then
\[
(\fC_1\otimes_\alpha \fD_1,\iota_1\otimes\eta_1) \succeq (\fC_2\otimes_\alpha \fD_2,\iota_2\otimes \eta_2)\iff
\begin{cases}
    (\fC_1,\iota_1)\succeq (\fC_2,\iota_2) & \text{and}\\
    (\fD_1,\eta_1)\succeq (\fD_2,\eta_2)
\end{cases}
\]
\end{lemma}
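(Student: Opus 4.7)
My plan is to prove both directions of the biconditional, with the forward direction requiring the bulk of the work. The backward direction uses functoriality of $\otimes_\alpha$ for $*$-homomorphisms between C*-algebras: given C*-cover morphisms $\pi_\A:(\fC_1,\iota_1)\to(\fC_2,\iota_2)$ and $\pi_\B:(\fD_1,\eta_1)\to(\fD_2,\eta_2)$, the map $\pi_\A\otimes\pi_\B:\fC_1\otimes_\alpha\fD_1\to\fC_2\otimes_\alpha\fD_2$ is a well-defined $*$-homomorphism which sends $\iota_1(a)\otimes\eta_1(b)$ to $\iota_2(a)\otimes\eta_2(b)$, so the diagram commutes.

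For the forward direction, suppose $\pi$ makes the diagram commute. I would first observe that the image of $\pi$ is a C*-subalgebra of $\fC_2\otimes_\alpha\fD_2$ containing $(\iota_2\otimes\eta_2)(\A\odot\B)$, which generates the codomain; hence $\pi$ is surjective, and in particular nondegenerate. Thus $\pi$ extends uniquely to a unital $*$-homomorphism $\tilde\pi:M(\fC_1\otimes_\alpha\fD_1)\to M(\fC_2\otimes_\alpha\fD_2)$ of multiplier algebras. Using the canonical $*$-homomorphic embeddings $\fC_i,\fD_i\hookrightarrow M(\fC_i\otimes_\alpha\fD_i)$ given by $c\mapsto c\otimes 1$ and $d\mapsto 1\otimes d$, define
\[
\pi_\A(c):=\tilde\pi(c\otimes 1),\qquad \pi_\B(d):=\tilde\pi(1\otimes d)
\]
for $c\in\fC_1$, $d\in\fD_1$.

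The key step is to verify that $\pi_\A$ takes values in $\fC_2\subseteq M(\fC_2\otimes_\alpha\fD_2)$ and restricts to a C*-cover morphism. For $a\in\A$ and any $y\in\fC_1\otimes_\alpha\fD_1$, the multiplier action gives $\tilde\pi(\iota_1(a)\otimes 1)\cdot\pi(y)=\pi((\iota_1(a)\otimes 1)\cdot y)$. Applied to $y=\iota_1(a')\otimes\eta_1(b')$, the hypothesis yields the right side as $\iota_2(aa')\otimes\eta_2(b')=(\iota_2(a)\otimes 1)\cdot\pi(y)$, and since such $\pi(y)$ densely span $\fC_2\otimes_\alpha\fD_2$ by surjectivity of $\pi$, we obtain $\pi_\A(\iota_1(a))=\iota_2(a)$, which lies in $\fC_2$. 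Then $\pi_\A(\fC_1)=C^*(\pi_\A(\iota_1(\A)))=C^*(\iota_2(\A))=\fC_2$, so $\pi_\A:(\fC_1,\iota_1)\to(\fC_2,\iota_2)$ is a morphism of C*-covers, and an identical argument produces $\pi_\B$. Finally, since $c\otimes d=(c\otimes 1)(1\otimes d)$ in $M(\fC_1\otimes_\alpha\fD_1)$, applying $\tilde\pi$ gives $\pi(c\otimes d)=\pi_\A(c)\pi_\B(d)=\pi_\A(c)\otimes\pi_\B(d)$ on pure tensors, so $\pi=\pi_\A\otimes\pi_\B$ by continuity and linearity.

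The ``consequently'' statement is then immediate: $(\fC_1\otimes_\alpha\fD_1,\iota_1\otimes\eta_1)\succeq(\fC_2\otimes_\alpha\fD_2,\iota_2\otimes\eta_2)$ if and only if a C*-cover morphism $\pi$ exists between them, which by the iff holds if and only if both $\pi_\A$ and $\pi_\B$ exist as C*-cover morphisms, i.e., iff $(\fC_1,\iota_1)\succeq(\fC_2,\iota_2)$ and $(\fD_1,\eta_1)\succeq(\fD_2,\eta_2)$. The main obstacle I anticipate is carefully handling the non-unital case through multipliers, specifically verifying that $c\mapsto c\otimes 1$ is a well-defined $*$-embedding $\fC_1\hookrightarrow M(\fC_1\otimes_\alpha\fD_1)$ uniformly for $\alpha\in\{\min,\max\}$ and that $\tilde\pi$ is strictly continuous enough to push the density argument through. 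In the unital setting one bypasses all of this by defining $\pi_\A(c):=\pi(c\otimes 1_{\fD_1})$ directly in $\fC_2\otimes_\alpha\fD_2$ and running the same argument verbatim.
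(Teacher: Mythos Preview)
Your overall strategy is sound, and in the unital case your argument is essentially the paper's: once units are present, $\pi(\iota_1(a)\otimes 1_{\fD_1})=\iota_2(a)\otimes 1_{\fD_2}$ follows directly from the commuting diagram, whence $\pi(\fC_1\otimes 1)\subseteq\fC_2\otimes 1$ because $\fC_1=C^*(\iota_1(\A))$, and this forces $\pi=\pi_\A\otimes\pi_\B$. The gap is in your non-unital multiplier argument. You claim the elements $\pi(\iota_1(a')\otimes\eta_1(b'))=\iota_2(a')\otimes\eta_2(b')$ ``densely span'' $\fC_2\otimes_\alpha\fD_2$, so that the identity $\tilde\pi(\iota_1(a)\otimes 1)\,z=(\iota_2(a)\otimes 1)\,z$ on this set determines the multiplier. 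But these elements only $*$-\emph{generate} $\fC_2\otimes_\alpha\fD_2$; they do not span densely (take $\A=\B=\T_2$ with $\fC_2=\fD_2=M_2$: then $\iota_2(\A)\odot\eta_2(\B)$ is $9$-dimensional inside the $16$-dimensional $M_2\otimes M_2$). Agreement of two multipliers as left multipliers on a $*$-generating set does not imply their equality, so the step ``we obtain $\pi_\A(\iota_1(a))=\iota_2(a)$'' is not justified as written, and the obvious attempts to promote this to a $*$-subalgebra fail because $\iota_1(a)^*\notin\iota_1(\A)$.

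The paper sidesteps this exactly as you anticipate in your closing remark: it reduces to the unital case by passing to unitizations $\A^1,\B^1,\fC_i^1,\fD_i^1$, noting that $\pi$ extends to a unital $*$-homomorphism $\tilde\pi:\fC_1^1\otimes_\alpha\fD_1^1\to\fC_2^1\otimes_\alpha\fD_2^1$ (using the standard decomposition of $*$-homomorphisms out of C*-tensor products into commuting pieces, as in \cite[Theorem~3.2.6]{BrownOzawa}, so that each piece can be unitized separately), and then runs the clean unital argument verbatim. Replacing your multiplier paragraph with this reduction would complete your proof.
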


\begin{proof}
As in \cite[Section 6.1.6]{Blecher}, upon passing to the unitizations $\A^1$ and $\B^1$, we have
\[
\A\otimes_\alpha \B \subseteq \A^1\otimes_\alpha \B^1
\]
completely isometrically--and similarly for the C*-tensor products involved. 


The $\ast$-homomorphism $\pi:\C_1\otimes_\alpha \D_1\to \C_2\otimes_\alpha \D_2$ is of the form $\pi_\C\cdot \pi_\D$ for $\ast$-homomorphisms $\pi_\C:\C_1\to \C_2\otimes \D_2$ and $\pi_\D:\D_1\to \C_1\otimes \D_2$ \cite[Theorem 3.2.6]{BrownOzawa}. So, $\pi$ extends all the way to a unital $\ast$-homomorphism
\[
\tilde{\pi}=\pi_\C^1\cdot \pi_\D^1:\C_1^1\otimes_\alpha \D_1^1\to \C_2^1\otimes_\alpha \D_2^1
\]
And, $\tilde{\pi}$ splits as a tensor product of $\ast$-homomorphisms if and only if $\pi$ does. Therefore, by unitizing the algebras $\A,\B,\C_i,\D_i$, and considering the extended maps $\iota_i^1\otimes \eta_i^1$ and $\tilde{\pi}$, we can assume without loss of generality that $\A$ and $\B$, their C*-covers, and all homomorphisms involved are unital.

If $\pi$ is of the form $\pi_\A\otimes\pi_\B$, it follows readily that the diagram commutes. Conversely, suppose $\pi$ is any $\ast$-homomorphism with $\pi(\iota_1\otimes\eta_1)=\iota_2\otimes\eta_2$. Then, $\pi$ maps $\iota_1(\A)\otimes_\alpha \bbC 1_{\fD_1}$ into the subalgebra $\iota_2(\A)\otimes_\alpha\bbC 1_{\fD_2}$, and since $C^\ast(\iota_1(\A)\otimes_\alpha \bbC 1_{\fD_1})=\fC_1\otimes_\alpha\bbC 1_{\fD_1}$, it follows that $\pi$ maps $\fC_1\otimes_\alpha\bbC 1_{\fD_1}$ into $\fC_2\otimes_\alpha\bbC 1_{\fD_2}$. Symmetrically, $\pi$ maps $\bbC 1_{\fC_1}\otimes_\alpha \fD_1$ into $\bbC 1_{\fC_2}\otimes_\alpha \fD_2$. From this, it must be the case that $\pi$ has the form $\pi_\A\otimes\pi_\B$, where $\pi_\A$ and $\pi_\B$ are uniquely defined by
\[
\pi_\A(c)\otimes 1_{\fD_1}=\pi(c\otimes 1_{\fD_1}) \quad\text{and}\quad
1_{\fC_1}\otimes \pi_\B(d) = \pi(1_{\fC_1}\otimes d)
\]
for $c\in \fC_1$ and $d\in \fD_1$.
\end{proof}



If $(L,\le)$ is a lattice, we say a subset $S\subseteq L$ is \textbf{upward-closed} if for all $a \in L$, whenever there exists $b\in S$ with $a\ge b$, then $a\in S$.

\begin{theorem}\label{thm:max_tensor_lattice}
Let $\A$ be an operator algebra, and let $\fB$ be a C*-algebra. Define
\begin{align*}
\cstarlattice_{\fB,\max}(\A):=\{[\fC,\iota]\mid \iota\otimes \id_\fB:\A\otimes_{\max} \fB\to \fC\otimes_{\max} \fB \text{ is completely isometric} \}.
\end{align*}
\begin{itemize}
\item [(1)] $\cstarlattice_{\fB,\max}(\A)$ is a nonempty upward-closed complete sublattice of $\cstarlattice(\A)$.
\item [(2)] There is an order injection
\begin{align*}
    \Phi:\cstarlattice_{\fB,\max}(\A) &\hookrightarrow \cstarlattice(\A\otimes_{\max}\fB) \\
    [\fC,\iota] &\mapsto [\fC\otimes_{\max} \fB,\iota\otimes \id_\fB].
\end{align*}
\item [(3)] If $\fB$ is nuclear, then
\[
\cstarlattice_{\fB,\max}(\A) = \cstarlattice(\A).
\]
\item [(4)] If $\fB$ is simple, then every C*-cover for $\A\otimes_{\max} \fB$ is isomorphic to one of the form
\[
(\fC\otimes_\alpha \fB,\iota\otimes \id_\fB),
\]
where $(\fC,\iota)$ is a C*-cover for $\A$ satisfying $[\fC,\iota]\in \cstarlattice_{\fB,\max}(\A)$, and $\otimes_\alpha$ denotes a completion of $\fC\odot \fB$ with respect to some C*-norm $\|\cdot\|_\alpha$.
\item [(5)] If $\fB$ is nuclear and simple, then $\Phi$ is a lattice isomorphism
\[
\cstarlattice(\A) \cong \cstarlattice(\A\otimes \fB).
\]
In particular, every C*-cover of $\A\otimes \fB$ has the form $\fC\otimes \fB$ for a C*-cover $\fC$ of $\A$.
\end{itemize}
\end{theorem}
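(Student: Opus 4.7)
The plan is to treat the five items in order, leveraging the identification of $\cstarlattice(\A)$ with boundary ideals in $\Cmax(\A)$ and Lemma~\ref{lem:tensor_morphism}. For item~(1), the maximal cover $[\Cmax(\A),\mu]$ lies in $\cstarlattice_{\fB,\max}(\A)$ because the universal property of $\otimes_{\max}$ embeds $\A\otimes_{\max}\fB$ completely isometrically into $\Cmax(\A)\otimes_{\max}\fB$. Upward-closedness follows from a two-out-of-three argument: if $\pi\colon\fD\to\fC$ is a C*-cover morphism with $[\fC,\iota]\in\cstarlattice_{\fB,\max}(\A)$ and $[\fD,\eta]\succeq[\fC,\iota]$, then $\iota\otimes\id_\fB=(\pi\otimes\id_\fB)\circ(\eta\otimes\id_\fB)$ is completely isometric and the outer factor is completely contractive, forcing $\eta\otimes\id_\fB$ to be completely isometric. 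Closure under joins follows at once. Closure under meets uses the canonical identification $\Cmax(\A\otimes_{\max}\fB)\cong \Cmax(\A)\otimes_{\max}\fB$ (both are universal for commuting pairs of completely contractive representations) together with the exactness of $\otimes_{\max}$: since sums of boundary ideals are always boundary ideals (each is contained in the Shilov ideal, itself a boundary ideal), the identity $\overline{\sum J_\lambda}\otimes_{\max}\fB=\overline{\sum(J_\lambda\otimes_{\max}\fB)}$ transports the defining condition of $\cstarlattice_{\fB,\max}(\A)$ to the meet.

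For items~(2) and~(3): $\Phi$ is well-defined by the condition on $\cstarlattice_{\fB,\max}(\A)$ and is order-preserving by tensoring morphisms with $\id_\fB$. Its injectivity is immediate from Lemma~\ref{lem:tensor_morphism}, since a $*$-homomorphism witnessing $\Phi([\fC_1,\iota_1])\succeq\Phi([\fC_2,\iota_2])$ factors as $\pi_\A\otimes\pi_\fB$, and $\pi_\A$ provides the required C*-cover morphism $(\fC_1,\iota_1)\to(\fC_2,\iota_2)$. For item~(3), nuclearity of $\fB$ yields $\fC\otimes_{\max}\fB=\fC\otimes_{\min}\fB$ for every C*-algebra $\fC$; after unitizing $\A$ and invoking \cite[Proposition 6.1.14]{Blecher}, $\A\otimes_{\max}\fB=\A\otimes_{\min}\fB$ as well, and the injectivity of $\otimes_{\min}$ at the operator-space level makes $\iota\otimes\id_\fB$ automatically completely isometric.

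Item~(4) is the technical heart and the main obstacle. Fix a C*-cover $(\fE,\pi)$ of $\A\otimes_{\max}\fB$ and use the universal property of $\otimes_{\max}$ to decompose $\pi=\pi_\A\cdot\pi_\fB$ as commuting completely contractive representations. Simplicity of $\fB$ forces the $*$-homomorphism $\pi_\fB$ to be injective, since the zero case is incompatible with $\pi$ being completely isometric. The key step is to verify that $\pi_\A$ itself is completely isometric on $\A$; the plan is to use a contractive approximate identity $(e_i)$ of $\fB$ so that $\pi^{(n)}(a\otimes e_i)$ factors through $\pi_\A^{(n)}(a)$ and a matricial copy of $\pi_\fB(e_i)$, combined with the C*-identity argument of Theorem~\ref{thm:direct_sum_covers} showing that $\pi_\fB(e_i)^*\pi_\fB(e_i)$ is an approximate identity for $C^*(\pi_\fB(\fB))$. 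This should force $\|\pi_\A^{(n)}(a)\|\ge \lim_i\|\pi_\A^{(n)}(a)\pi_\fB(e_i)\|=\lim_i\|a\otimes e_i\|_{\max}=\|a\|$. With $\pi_\A$ completely isometric, set $\fC:=C^*(\pi_\A(\A))$; after faithfully representing $\fE$ on a Hilbert space, the commuting C*-subalgebras $\fC$ and $\pi_\fB(\fB)\cong \fB$ of $\fE$ satisfy the hypothesis of Proposition~\ref{prop:simple_tensor_injective}, so the multiplication map $\fC\odot\fB\to\fE$ is injective. Its range is dense in $\fE=C^*(\pi_\A(\A)\cup\pi_\fB(\fB))$, so completion in the C*-norm inherited from $\fE$ gives $\fE\cong \fC\otimes_\alpha\fB$ for some C*-norm $\alpha$, and $[\fC,\pi_\A]\in\cstarlattice_{\fB,\max}(\A)$ by construction.

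Item~(5) then combines items~(3) and~(4): with $\fB$ nuclear and simple, item~(3) gives $\cstarlattice_{\fB,\max}(\A)=\cstarlattice(\A)$, while nuclearity collapses the $\otimes_\alpha$ of item~(4) to the unique C*-tensor product $\otimes$. Hence every C*-cover of $\A\otimes\fB$ has the form $\fC\otimes\fB$ for some $[\fC,\iota]\in\cstarlattice(\A)$, so $\Phi$ is surjective. Combined with injectivity from item~(2) and order-preservation of its inverse (via one more application of Lemma~\ref{lem:tensor_morphism}), $\Phi$ is a lattice isomorphism.
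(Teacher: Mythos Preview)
Your proof is correct and follows essentially the same route as the paper: Blecher--Le Merdy for nonemptiness and the exactness of $\otimes_{\max}$ for closure under meets in (1), Lemma~\ref{lem:tensor_morphism} for (2), injectivity of $\otimes_{\min}$ for (3), and Proposition~\ref{prop:simple_tensor_injective} for the structural step in (4). The one notable difference is that in (4) you actually argue why $\pi_\A$ is completely isometric (via the approximate identity of $\fB$ and the cross-norm identity $\|a\otimes e_i\|_{\max}=\|a\|\|e_i\|$), whereas the paper simply asserts it; your sketch is correct, though the reference to the C*-identity argument of Theorem~\ref{thm:direct_sum_covers} is unnecessary here---only the trivial bound $\|\pi_\A^{(n)}(a)(1_n\otimes\pi_\fB(e_i))\|\le\|\pi_\A^{(n)}(a)\|$ is needed.
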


\begin{proof}
(1) In \cite[Section 6.1.9]{Blecher}, Blecher and Le Merdy show that
\[
\A\otimes_{\max} \fB\to C^\ast_{\max}(\A) \otimes_{\max} \fB
\]
is completely isometric using only the universal property of $C^\ast_{\max}(\A)$. So, $\cstarlattice_{\fB,\max}(\A)$ contains $C^\ast_{\max}(\A)$ and is always nonempty. To show that it is upward closed, suppose $[\fC,\iota]\in \cstarlattice(\A)$ and $[\fD,\eta]\in \cstarlattice_{\fB,\max}(\A)$ with $[\fC,\iota]\succeq [\fD,\iota]$. Then there is a morphism $\pi:(\fC,\iota)\to (\fD,\eta)$ of C*-covers. Taking a maximal tensor product with $\fB$ shows that the diagram
\[\begin{tikzcd}
    \fC\otimes_{\max}\fB \arrow[r,"\pi\otimes \id_\fB"] & \fD\otimes_{\max} \fB\\
    \A\otimes_{\max}\fB \arrow[u,"\iota\otimes\id_\fB"] \arrow[ur,swap,"\eta\otimes\id_\fB"]
\end{tikzcd}\]
commutes. Since $\eta\otimes\id_\fB$ is completely isometric and $\pi\otimes \id_\fB$ is completely contractive, this implies that $\iota\otimes\id_\fB$ is completely isometric. So, $[\fC,\iota]\in \cstarlattice_{\fB,\max}(\A)$.

Being upward closed, $\cstarlattice_{\fB,\max}(\A)$ is closed under arbitrary joins. To show that $\cstarlattice_{\fB,\max}(\A)$ is closed under arbitrary meets, by the description of the meet in Proposition \ref{prop:cstarlattice}, it suffices to assume that $(\C,\iota)$ is a C*-cover for $\A$, and $\I_\lambda$, $\lambda\in \Lambda$, are boundary ideals with quotient maps $q_\lambda:\C\to \C/\I_\lambda$ such that $[\C/\I_\lambda,q_\lambda\iota]$ is in $\cstarlattice_{\B,\max}(\A)$ for each $\lambda\in \Lambda$. Then, for $\I=\overline{\sum_{\lambda \in \Lambda}\I_\lambda}$ with quotient map $q$, we will show that $[\C/\I,q\iota]$ is also in $\cstarlattice_{\B,\max}(\A)$. Since the maximal tensor product $\otimes_{\max}$ of C*-algebras is exact \cite[Proposition 3.7.1]{BrownOzawa}, for any closed ideal $\J\triangleleft \C$, the natural map $\J\otimes_{\max}\B\to \C\otimes_{\max} \B$ is injective and so we identify $\J\otimes_{\max} \B\subseteq \C\otimes_{\max} \B$ as a literal subset which satisfies
\[
\frac{\C\otimes_{\max}\B}{\J\otimes_{\max}\B} \cong
\frac{\C}{\J}\otimes_{\max}\B
\]
via the natural map. Then, when identified as subsets of $\C\otimes_{\max}\B$, we have
\[
\I\otimes_{\max} \B =
\left(\overline{\sum_{\lambda\in \Lambda} \I_\lambda}\right)\otimes_{\max} \B =
\overline{\sum_{\lambda\in \Lambda}
    \left(\I_\lambda\otimes_{\max} \B\right)
}.
\]
Since each $\I_\lambda\otimes_{\max} \B$ is a boundary ideal for $(\iota\otimes\id_\B)(\A\otimes_{\max} \B)$, and the boundary ideals form a complete sublattice of the ideal lattice in $\C\otimes_{\max} \B$, the ideal $\I\otimes_{\max}\B$ is also a boundary ideal. That is, the map
\[
q\iota:\A\otimes_{\max}\B \to
\left(\frac{\C}{\I}\right)\otimes_{\max} \B \cong
\frac{\C\otimes_{\max}\B}{\I\otimes_{\max}\B}
\]
is completely isometric, and so $[\C/\I,q\iota]$ is in $\cstarlattice_{\B,\max}(\A)$. This proves $\cstarlattice_{\B,\max}(\A)$ is closed under arbitrary meets and so is a complete lattice.


\bigskip

(2) Lemma \ref{lem:tensor_morphism} shows that $\Phi$ is a well-defined order injection.

\bigskip

(3) If $\fB$ is a nuclear C*-algebra, then $\C\otimes_{\max} \fB=\C\otimes_{\min} \fB=:\C\otimes \fB$ for every operator algebra $\C$. Since the minimal tensor product $\otimes_{\min}$ preserves complete isometry, $\iota\otimes \id_\fB:\A\otimes \fB\to \fC\otimes \fB$ is completely isometric for every C*-cover $(\fC,\iota)$ of $\A$. So,
\[
\cstarlattice_{\fB,\max}(\A)=\cstarlattice(\A).
\]

\bigskip

(4) Suppose that $\fB$ is simple. Let $(\fD,\eta)$ be a C*-cover for $\A\otimes_{\max}\fB$. Then we have $\eta=\iota\cdot \sigma$, where $\iota:\A\to \fD$ is a completely isometric homomorphism, and $\sigma:\fB\to \fD$ is an injective $\ast$-homomorphism. Set $\fC:= C^\ast(\iota(\A))$, so that $(\fC,\iota)$ is a C*-cover of $\A$. Since $\sigma(\fB)\cong \fB$ is simple and commutes with $\C$, Proposition \ref{prop:simple_tensor_injective} shows that the multiplication map
\[
\fC\odot \fB \xrightarrow[]{\id_\fC\cdot \sigma} \fD
\]
is injective. Let $\|\cdot\|_\alpha$ be the norm induced on $\fC\odot \fB$ via this injection, and then $\id_\fC\cdot  \sigma$ extends to an injective $\ast$-homomorphism on the completion $\fC\otimes_\alpha \fB$ with respect to the C*-norm $\|\cdot\|_\alpha$. Then, the diagram
\[\begin{tikzcd}
    \fC\otimes_\alpha \fB \arrow[r,"\id_\fC\cdot \sigma",hook] & \fD \\
    \A\otimes_{\max} \fB \arrow[u,"\iota\otimes\id_\fB",hook] \arrow[ur,swap,hook,"\eta"]
\end{tikzcd}\]
commutes. Since $\fD=C^\ast(\eta(\A\otimes_{\max}\fB))$, it also follows that $\id_\fC\cdot \sigma$ is surjective and so an isomorphism
\[
(\fD,\eta)\cong (\fC\otimes_{\alpha} \fB,\iota\otimes \id_\fB)
\]
of C*-covers. Finally, using the unique $\ast$-homomorphism $\fC\otimes_{\max} \fB\to \fC\otimes_\alpha \fB$, the diagram
\[\begin{tikzcd}
    \fC\otimes_{\max} \fB \arrow[r] & \fC\otimes_\alpha \fB \\
    \A\otimes_{\max} \fB\arrow[u,"\iota\otimes \id_\fB"] \arrow[ur,swap,hook,"\iota\otimes \id_\fB"]
\end{tikzcd}\]
commutes, and since the diagonal map is completely isometric, so is the vertical map. Hence, this shows $\fC\in \cstarlattice_{\fB,\max}(\A)$.

\bigskip

(5) If $\fB$ is both nuclear and simple, then statements (2) and (3) show that $\Phi$ gives an order injection $\cstarlattice(\A)\to \cstarlattice(\A\otimes \fB)$. Item (4) implies that $\Phi$ is also surjective, because nuclearity implies that $\fC\otimes_\alpha \fB=\fC\otimes_{\max} \fB$ for any C*-cover $\fC$.
\end{proof}


Using the standard assignment $M_n(\A)\cong \A\otimes M_n$ for any operator algebra, we recover the following result, which can be proved on its own using basic representation theory of $M_n$.

\begin{corollary}
If $\A$ is an operator algebra, then $\A$ and $M_n(\A)$ are lattice isomorphic via the assignment
\[
[\fC,\iota]\mapsto [M_n(\fC),\iota^{(n)}].
\]
\end{corollary}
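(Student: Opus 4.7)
The plan is to obtain this corollary as a straightforward consequence of Theorem \ref{thm:max_tensor_lattice}(5), applied with $\fB = M_n$. Since $M_n$ is finite-dimensional, it is nuclear, and as a full matrix algebra it is simple, so both hypotheses of part (5) are satisfied. The theorem then yields a lattice isomorphism
\[
\Phi : \cstarlattice(\A) \longrightarrow \cstarlattice(\A \otimes M_n), \qquad [\fC,\iota] \longmapsto [\fC \otimes M_n,\ \iota \otimes \id_{M_n}].
\]

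To match the precise statement, I would invoke the standard completely isometric identification $\A \otimes M_n \cong M_n(\A)$, which holds for any operator algebra (nuclearity of $M_n$ forces a single tensor norm, and this norm coincides with the canonical matrix norm on $M_n(\A)$). The same identification applied to any C$^*$-cover yields a $\ast$-isomorphism $\fC \otimes M_n \cong M_n(\fC)$. Under both identifications, the map $\iota \otimes \id_{M_n}$ corresponds entrywise to the $n$-fold amplification $\iota^{(n)} : M_n(\A) \to M_n(\fC)$ given by $(a_{ij}) \mapsto (\iota(a_{ij}))$. Composing $\Phi$ with these identifications produces precisely the assignment $[\fC,\iota] \mapsto [M_n(\fC), \iota^{(n)}]$.

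The only work not already handled by Theorem \ref{thm:max_tensor_lattice} is checking that the identifications $\A \otimes M_n \cong M_n(\A)$ and $\fC \otimes M_n \cong M_n(\fC)$ intertwine $\iota \otimes \id_{M_n}$ with $\iota^{(n)}$ and respect the C$^*$-cover ordering, which is routine naturality. An alternative, self-contained argument would use the representation theory of $M_n$: every nondegenerate $\ast$-representation of $M_n$ is unitarily equivalent to an amplification of its identity representation, so any C$^*$-cover $(\fD,\eta)$ of $M_n(\A)$ must split as $M_n(\fC)$ with $\eta = \iota^{(n)}$ for a uniquely determined C$^*$-cover $(\fC,\iota)$ of $\A$, and the order relation is preserved blockwise. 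I do not foresee any real obstacle along either route.
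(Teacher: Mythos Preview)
Your proposal is correct and follows essentially the same approach as the paper: the corollary is stated immediately after Theorem \ref{thm:max_tensor_lattice} with the remark that it follows from the standard identification $M_n(\A)\cong \A\otimes M_n$, and the paper even notes (as you do) that it can alternatively be proved directly using the basic representation theory of $M_n$. Your added verification that $\iota\otimes\id_{M_n}$ corresponds to $\iota^{(n)}$ under this identification is exactly the routine naturality the paper leaves implicit.
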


The theory discussed above then also gives a complementary theory for the min tensor product. Note that the final sentence in the following Proposition was proved without any restriction on simplicity first in \cite[Corollary 2.7]{DOEG}, but we are able to obtain it again in the simple case as a consequence of our methods.

\begin{proposition}\label{prop:min_tensor_lattice}
Let $\A$ be an operator algebra, and let $\fB$ be a C*-algebra. There is an order injection
\begin{align*}
    \Psi:\cstarlattice(\A) &\hookrightarrow \cstarlattice(\A\otimes_{\min}\fB) \\
    [\fC,\iota] &\mapsto [\fC\otimes_{\min} \fB,\iota\otimes \id_\fB].
\end{align*}
Moreover, if $\fB$ is simple, then every C*-cover for $\A\otimes_{\min} \fB$ is isomorphic to one of the form $(\fC\otimes_\alpha \fB,\iota\otimes\id_\fB)$, where $(\fC,\iota)$ is a C*-cover for $\A$, and $\iota\otimes \id_\fB:\A\otimes_{\min}\fB\to \fC\otimes_\alpha \fB$ is completely isometric. Consequently, if $\fB$ is simple, then
\[
C^*_{e}(\A\otimes_{\min}\fB) \cong
C^*_{e}(\A)\otimes_{\min}\fB.
\]
\end{proposition}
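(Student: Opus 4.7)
The proof would proceed in three stages, closely paralleling the max-tensor development in Theorem \ref{thm:max_tensor_lattice} and reusing Proposition \ref{prop:simple_tensor_injective} and Lemma \ref{lem:tensor_morphism}.

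First I would establish that $\Psi$ is a well-defined order injection. Injectivity of $\otimes_{\min}$ makes $\iota\otimes\id_\fB$ completely isometric whenever $\iota$ is, and a standard approximate-identity (or unitization) argument shows that $\iota(\A)\odot\fB$ generates $\fC\otimes_{\min}\fB$ as a C*-algebra, so $\Psi([\fC,\iota])=[\fC\otimes_{\min}\fB,\iota\otimes \id_\fB]$ is a genuine C*-cover. Functoriality of $\otimes_{\min}$ yields order preservation, and the min case of Lemma \ref{lem:tensor_morphism} gives injectivity.

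For the structure statement when $\fB$ is simple, I would follow the blueprint of the proof of Theorem \ref{thm:max_tensor_lattice}(4). Given a C*-cover $(\fD,\eta)$ of $\A\otimes_{\min}\fB$, composing $\eta$ with the canonical quotient $\A\otimes_{\max}\fB \twoheadrightarrow \A\otimes_{\min}\fB$ produces a completely contractive representation of $\A\otimes_{\max}\fB$, which by \cite[Corollary 6.1.7]{Blecher} splits as $\iota\cdot \sigma$ with commuting ranges. Complete isometry of $\eta$ forces $\iota:\A\to \fD$ to be completely isometric, and simplicity of $\fB$ forces $\sigma:\fB\to \fD$ to be an injective $\ast$-homomorphism. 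Setting $\fC=C^*(\iota(\A))$, Proposition \ref{prop:simple_tensor_injective} gives that the multiplication $\fC\odot \fB\to \fD$ is injective, and so defines a C*-norm $\|\cdot\|_\alpha$ on $\fC\odot \fB$. Its completion $\fC\otimes_\alpha \fB$ is isomorphic to $\fD$ via an isomorphism intertwining $\eta$ and $\iota\otimes\id_\fB$, so $\iota\otimes\id_\fB$ is completely isometric into $\fC\otimes_\alpha\fB$.

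For the final statement, I would apply the structure just obtained to write $C^*_e(\A\otimes_{\min}\fB)\cong \fC\otimes_\alpha\fB$ for some C*-cover $(\fC,\iota)$ of $\A$ and some C*-norm $\alpha$. Since $(C^*_e(\A)\otimes_{\min}\fB,\epsilon\otimes\id_\fB)$ is a C*-cover by the first stage, minimality of the envelope supplies a surjective morphism $\pi:C^*_e(\A)\otimes_{\min}\fB\to \fC\otimes_\alpha \fB$. The argument of Lemma \ref{lem:tensor_morphism} applies even when the source and target use different C*-norms, because it only needs that a $\ast$-homomorphism of tensor products sends each tensor factor (identified via its generators and unit) into the corresponding tensor factor of the target. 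This yields $\pi=\pi_\A\otimes \pi_\fB$ with $\pi_\A:C^*_e(\A)\to \fC$ and $\pi_\fB:\fB\to \fB$, and the intertwining condition, together with (an approximate) unit in $\A$, forces $\pi_\A\epsilon=\iota$ and $\pi_\fB=\id_\fB$. Composing $\pi_\A$ with the unique morphism $\fC\to C^*_e(\A)$ and invoking uniqueness of C*-cover morphisms shows $\pi_\A$ is a $\ast$-isomorphism, so we may take $\fC=C^*_e(\A)$. Then $\pi$ is a surjective $\ast$-homomorphism $C^*_e(\A)\otimes_{\min}\fB\to C^*_e(\A)\otimes_\alpha \fB$ that acts as the identity on the algebraic tensor product, which forces $\|\cdot\|_\alpha\le\|\cdot\|_{\min}$; combined with minimality of $\|\cdot\|_{\min}$, this gives $\alpha=\min$ and the desired isomorphism $C^*_e(\A\otimes_{\min}\fB)\cong C^*_e(\A)\otimes_{\min}\fB$.

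The main obstacle will be Stage 3: one must verify that the tensor-factor decomposition of Lemma \ref{lem:tensor_morphism} still produces a well-defined $\pi_\A\otimes\pi_\fB$ when the source and target carry different C*-norms, and sharpen the intertwining condition to pin $\pi_\fB$ down to the identity. Once these are in hand, the norm comparison $\alpha=\min$ is a one-line consequence of continuity of $\ast$-homomorphisms.
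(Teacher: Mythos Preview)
Your Stages 1 and 2 match the paper's proof essentially verbatim: injectivity of $\otimes_{\min}$ plus Lemma \ref{lem:tensor_morphism} for the order injection, and the argument of Theorem \ref{thm:max_tensor_lattice}(4) transported to $\otimes_{\min}$ for the structure result.

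Stage 3 is where you diverge. Your plan is to take the canonical morphism \emph{into} the envelope, $\pi:C^*_e(\A)\otimes_{\min}\fB\to \fC\otimes_\alpha\fB$, split it as $\pi_\A\otimes\pi_\fB$, and then argue that $\pi_\A$ is a $\ast$-isomorphism and $\alpha=\min$. This works, but it forces you to extend Lemma \ref{lem:tensor_morphism} to the mixed-norm setting, which you correctly flag as the main obstacle. The paper avoids this entirely by reversing the direction: starting from $C^*_e(\A\otimes_{\min}\fB)\cong \fC\otimes_\alpha\fB$, it builds an \emph{explicit} morphism of C*-covers
\[
\fC\otimes_\alpha\fB \xrightarrow{\ q\ } \fC\otimes_{\min}\fB \xrightarrow{\ \pi\otimes\id_\fB\ } C^*_e(\A)\otimes_{\min}\fB,
\]
where $q$ is the quotient to the minimal norm and $\pi:(\fC,\iota)\to (C^*_e(\A),\epsilon)$ is the universal morphism to the envelope of $\A$. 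A quick diagram chase shows this composite intertwines $\iota\otimes\id_\fB$ and $\epsilon\otimes\id_\fB$, so it is a morphism of C*-covers \emph{out of} the C*-envelope, hence automatically a $\ast$-isomorphism. No tensor-splitting of an abstract morphism is needed, and the identification $\alpha=\min$ comes for free. Your route reaches the same destination but with more machinery; the paper's trick of going outward from the envelope rather than inward is worth internalizing.
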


\begin{proof}
Since the minimum tensor product $\otimes_{\min}$ is injective, it follows that if $\iota:\A\to \fC$ is completely isometric, then $\iota\otimes 1_\fB:\A\otimes_{\min}\fB\to \fC\otimes_{\min} \fB$ is completely isometric. Therefore $\Psi$ is well defined. Lemma \ref{lem:tensor_morphism} with $\alpha=\min$ shows that $\Psi$ is an order injection.

If $\fB$ is simple, then the same argument as in the proof of Theorem \ref{thm:max_tensor_lattice}.(3) with $\otimes_{\min}$ in place of $\otimes_{\max}$ shows that any C*-cover is a C*-completion of $\fC\odot \fB$, for some C*-cover $(\fC,\iota)$. (However, we cannot conclude from this that $\fC\in \cstarlattice_{\fB,\max}(\A)$.) In particular, we have
\[
C^\ast_{e}(\A\otimes_{\min} \fB)\cong
\fC\otimes_\alpha \fB
\]
for some C*-cover $(\fC,\iota)$. If $\pi:(\fC,\iota)\to (C^\ast_{e}(\A),\eta)$ is the universal homomorphism to the C*-envelope, and $q:\fC\otimes_\alpha \fB\to \fC \otimes_{\min} \fB$ is the quotient to the minimal tensor product, then the following diagram commutes:
\[\begin{tikzcd}
    \fC\otimes_\alpha \fB \arrow[r,"q"] & \fC\otimes_{\min} \fB \arrow[r,"\pi\otimes\id_\fB"] & C^\ast_{e}(\A)\otimes_{\min} \fB \\
    & \A\otimes_{\min} \fB \arrow[ul,hook,"\iota\otimes \id_\fB"] \arrow[u,hook,"\iota\otimes\id_\fB"] \arrow[ur,hook,swap,"\epsilon\otimes\id_\fB"].
\end{tikzcd}\]
Therefore $(\pi\otimes\id_\fB)q$ is a morphism of C*-covers, and therefore a $\ast$-isomorphism
\[
C^\ast_{e}(\A\otimes_{\min} \fB)=\fC\otimes_\alpha \fB\cong C^\ast_{e}(\A)\otimes_{\min} \fB. \qedhere
\]
\end{proof}


The following example shows that if $\fB$ is not simple, then Theorem \ref{thm:max_tensor_lattice}.(3) and the corresponding statement in Proposition \ref{prop:min_tensor_lattice} both may fail.

\begin{example}
If $\A$ is any operator algebra, then we have
\[
\A\otimes \bbC^2 \cong \A\oplus \A
\]
completely isometrically. Then, for any pair $(\fC,\iota)$ and $(\fD,\eta)$ of C*-covers for $\A$, $(\fC\oplus \fD,\iota\oplus \eta)$ is a C*-cover for $\A\oplus \A$. However, it is not isomorphic to one of the form $(\fB\otimes \bbC^2,\rho\otimes\id_{\bbC^2})\cong (\fB\oplus \fB,\rho\oplus \rho)$ for a C*-cover $(\fB,\rho)$ unless
\[
(\fC,\iota)\cong (\fB,\rho)\cong (\fD,\eta).
\]
Indeed, a C*-cover isomorphism
\[\begin{tikzcd}
    \fC\oplus \fD \arrow[r,"\pi"] & \fB\oplus \fB \\
    \A\oplus \A \arrow[u,"\iota\oplus \eta",hook] \arrow[ur,swap,hook,"\rho\oplus \rho"]
\end{tikzcd}\]
must be of the form $\alpha\oplus \beta$, where $\alpha:(\fC,\iota)\to (\fB,\eta)$ and $\beta:(\fD,\eta)\to (\fB,\rho)$ are isomorphisms. (This same argument appears in the proof of Theorem \ref{thm:direct_sum_covers}.)
\end{example}

If neither $\A$ nor $\B$ is a C*-algebra, the C*-covers of a tensor product $\A\otimes_\alpha \B$ may be even more ill-behaved. A fundamental obstruction is that in a (completely isometric) representation of $\A\otimes_\alpha \B$, the images of $\A$ and $\B$ commute but need not $\ast$-commute.

\begin{example}
If $\A$ and $\B$ are non-selfadjoint, then $\A\otimes_{\max} \B$ and $\A\otimes_{\min} \B$ can have C*-covers which are not tensor products of C*-covers of $\A$ and $\B$. For instance, let $\A=\B=A(\bbD)$ be the disk algebra. Ando's dilation theorem implies that
\[
A(\bbD)\otimes_{\max} A(\bbD) = A(\bbD)\otimes_{\min}A(\bbD),
\]
see \cite[Section 6.2]{Blecher}, and we will write both as simply $A(\bbD)\otimes A(\bbD)$.

Let $\iota:A(\bbD)\to C(\bbT)$ be the inclusion of $A(\bbD)$ into its C*-envelope (or, any completely isometric representation of $A(\bbD)$ gives a working example here). Since $\iota$ is completely isometric, and $\otimes_{\min}$ is injective, 
\[
\iota\otimes\iota :A(\bbD)\otimes A(\bbD)\to C(\bbT)\otimes C(\bbT)
\]
is completely isometric. Let $S\in B(\ell^2)$ be the unilateral shift, and $\pi:A(\bbD)\to B(\ell^2)$ be the completely isometric representation determined by mapping the generator $z$ in $A(\bbD)$ to $S$. Then,
\[
\pi\cdot \pi:A(\bbD)\otimes A(\bbD) \to B(\ell^2)
\]
is a completely contractive representation of $A(\bbD)\otimes A(\bbD)$. Then $\sigma=(\iota\otimes \iota)\oplus (\pi\cdot\pi)$ is a completely isometric representation. Here,
\[
\sigma(z\otimes 1) = (z\otimes 1)\oplus S \quad\text{and}\quad \sigma(1\otimes z) =(1\otimes z)\oplus S
\]
commute, but do not $\ast$-commute because $S$ is not normal. Therefore
\[
(C^\ast(\sigma(A(\bbD)\otimes A(\bbD))),\sigma)
\]
is a C*-cover which is not of the form
\[
(\fC\otimes_\alpha \fD,\alpha\otimes \beta)
\]
for any C*-tensor product $\otimes_\alpha$ and C*-covers $(\fC,\alpha),(\fD,\beta)$ for $A(\bbD)$. This also shows that the images of $z\otimes 1$ and $1\otimes z$ do not $\ast$-commute in the maximal C*-algebra of $A(\bbD)\otimes A(\bbD)$, and so the morphism of C*-covers
\[
C^\ast_{\max}(A(\bbD)\otimes A(\bbD)) \to C^\ast_{\max}(A(\bbD))\otimes_{\max} C^\ast_{\max}(A(\bbD))
\]
is not a $\ast$-isomorphism.
\end{example}

Finally, we can complete our task of showing the five equivalences of operator algebras are all different. Much of this has been proven already but it will greatly help the reader to have this all in one place.

\begin{theorem}\label{thm:allrelationsdifferent}
Completely isometrically isomorphic is strictly stronger than lattice intertwined which is strictly stronger than lattice $*$-isomorphic which is strictly stronger than either of lattice isomorphic or C$^*$-cover equivalent. Moreover, lattice isomorphic and C$^*$-cover equivalent are unrelated.
\end{theorem}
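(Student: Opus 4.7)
The proof is essentially a consolidation exercise: combine the four immediate definitional implications with the separating examples already constructed throughout Sections 2 and 3. I would start by verifying that the Hasse diagram implications are immediate from the definitions. A completely isometric isomorphism $\A \cong \B$ transports C*-covers and canonically produces the intertwining $\ast$-isomorphisms. Lattice intertwining clearly entails lattice $\ast$-isomorphism (the required $\ast$-isos at each node are already part of the intertwining data). Lattice $\ast$-isomorphism forgets immediately to both lattice isomorphism (discard the $\ast$-iso data) and to C*-cover equivalence (the $\ast$-isos witness the matching of C*-algebras in pairs).

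For the strict separations, I would cite the explicit examples already in the paper. Example \ref{ex:intertwinednotcii} paired with Proposition \ref{prop:intertwinedwithopposite} shows that the algebra $\A$ and its conjugate $\A^\ast$ (the $3\times 3$ nilpotent example) are lattice intertwined but not completely isometrically isomorphic. Any pair of nonisomorphic C*-algebras has singleton lattice of C*-covers, so is trivially lattice isomorphic while failing to be lattice $\ast$-isomorphic or C*-cover equivalent; this gives lattice iso $\not\Rightarrow$ lattice $\ast$-iso and lattice iso $\not\Rightarrow$ C*-cover equivalent. The pair $\T_2 \oplus \fC_{\T_2}$ and $\fC_{\T_2}$ from Proposition \ref{prop:addallthecovers} and the remark following Corollary \ref{cor:makestariso} is C*-cover equivalent but not lattice isomorphic, hence also not lattice $\ast$-isomorphic. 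Combining these last two observations yields that lattice isomorphism and C*-cover equivalence are unrelated.

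The \textbf{main obstacle} is to produce a pair which is lattice $\ast$-isomorphic but not lattice intertwined. My plan is to apply Corollary \ref{cor:makestariso} to operator algebras $\A$ and $\B$ that are already lattice isomorphic but fail to be lattice intertwined, for instance $\T_2$ and $\T_2 \oplus \K$ (lattice isomorphic by Corollary \ref{cor:addacstaralgebra}, but with distinct C*-algebras at matching lattice points, e.g.\ the cover $M_2$ of $\T_2$ has no $\ast$-isomorphic counterpart among covers of $\T_2 \oplus \K$). The corollary produces $\A\oplus \fC_\A \oplus \fC_\B$ and $\B \oplus \fC_\A \oplus \fC_\B$ which are lattice $\ast$-isomorphic by construction. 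The nontrivial part is then to verify via Proposition \ref{prop:intertwining}(ii) that no $\ast$-isomorphism of their maximal C*-algebras can carry the set of boundary ideals on one side bijectively to the set of boundary ideals on the other. The delicate point is that the infinite direct sum structure built into $\fC_\A$ and $\fC_\B$ tends to absorb distinguishing features of the C*-algebras themselves, so the working example must be chosen so that some boundary-ideal feature (for instance a finite-dimensional quotient corresponding to the $M_2$ envelope of $\T_2$) is realizable on one side in a way that cannot be matched to the other side under any $\ast$-isomorphism of $\Cmax$. Carrying out this boundary-ideal comparison is where the real work lies, and if the $\T_2$ vs.\ $\T_2 \oplus \K$ candidate turns out to be too symmetric to separate after the $\fC$-absorption, I would fall back on a more rigid pair with a finite C*-cover lattice whose boundary-ideal combinatorics are manifestly inequivalent.
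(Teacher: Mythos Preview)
Your handling of the definitional implications and of four of the five strict separations matches the paper's approach. The genuine gap is in the step separating lattice $\ast$-isomorphism from lattice intertwining, and your primary candidate pair does not work.

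Concretely: take $\A=\T_2$ and $\B=\T_2\oplus\K$ and form $\A'=\A\oplus\fC_\A\oplus\fC_\B$ and $\B'=\B\oplus\fC_\A\oplus\fC_\B$ as in Corollary~\ref{cor:makestariso}. The Shilov ideal of a direct sum $\A\oplus\fD$ with $\fD$ a C*-algebra is just the Shilov ideal of $\A$ sitting in the first summand (the quotient $\Cmax\to C^*_e$ is the identity on $\fD$). Hence the kernels of $q_{\A'}$ and $q_{\B'}$ are both canonically $\ast$-isomorphic to the Shilov ideal of $\T_2$. Moreover, $\fC_{\T_2\oplus\K}$ contains infinitely many $\K$ summands, so the Hilbert-hotel absorption $\fC_{\T_2\oplus\K}\cong\K\oplus\fC_{\T_2\oplus\K}$ lets you build a $\ast$-isomorphism $\Phi:\Cmax(\A')\to\Cmax(\B')$ which is the identity on the $\Cmax(\T_2)$ summand and shuffles the C*-summands. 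This $\Phi$ descends to an isomorphism $\phi$ of envelopes making the square in Proposition~\ref{prop:intertwining}(iii) commute. Thus $\A'$ and $\B'$ \emph{are} lattice intertwined, and your suspicion that the example is ``too symmetric to separate after the $\fC$-absorption'' is exactly right.

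The paper resolves this by choosing $\A=\T_2$ and $\B=M_2(\T_2)$, which are lattice isomorphic by Theorem~\ref{thm:max_tensor_lattice}. The point is that tensoring with $M_2$ genuinely changes the Shilov ideal: using Loring's computation of $\Cmax(\T_2)$, the Shilov ideal of $\T_2$ has no $\ast$-homomorphism onto $M_4$, whereas that of $M_2(\T_2)$ does (point evaluation at an interior parameter). Since any commuting square as in Proposition~\ref{prop:intertwining}(iii) forces a $\ast$-isomorphism of Shilov ideals, this obstruction survives the $\fC$-absorption and gives the separation. Your vague fallback (``a more rigid pair with finite C*-cover lattice'') does not identify this mechanism; the invariant you need is the $\ast$-isomorphism class of the Shilov ideal, and you need a lattice isomorphism that moves it.
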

\begin{proof}
Proposition \ref{prop:intertwinedwithopposite} and Example \ref{ex:intertwinednotcii} showed that there is an operator algebra $\A$ such that $\A$ is not completely isometrically isomorphic to $\A^*$ but $\A$ and $\A^*$ are always lattice intertwined.

By Corollary \ref{cor:addacstaralgebra} we know that $\T_2$ and $\T_2\oplus \K$ are lattice isomorphic but not C$^*$-cover equivalent and so not lattice $*$-isomorphic either.

By Proposition \ref{prop:addallthecovers} we know that $\T_2\oplus \fC_{\T_2}$ and $\fC_{\T_2}$ are C$^*$-cover equivalent but not lattice isomorphic and so not lattice $*$-isomorphic either.

The only thing left to prove is to show lattice $*$-isomorphic does not imply lattice intertwined. 
To this end, Theorem \ref{thm:max_tensor_lattice} gives us that $\T_2$ and $M_2(\T_2)$ are lattice isomorphic. By Corollary \ref{cor:makestariso} $\T_2 \oplus \fC_{\T_2} \oplus \fC_{M_2(\T_2)}$ and $M_2(\T_2) \oplus \fC_{\T_2} \oplus \fC_{M_2(\T_2)}$ are lattice $*$-isomorphic. Again by \cite{Loring} and \cite[Example 2.4]{Blech} we know that 
\[
\Cmax(\T_2 \oplus \fC_{\T_2} \oplus \fC_{M_2(\T_2)}) \simeq \{f\in M_2(C([0,1])) : f(0) \ \textrm{is diagonal}\} \oplus \fC_{\T_2} \oplus \fC_{M_2(\T_2)}
\]
and 
\[
\Cmax(M_2(\T_2) \oplus \fC_{\T_2} \oplus \fC_{M_2(\T_2)}) \simeq M_2(\{f\in M_2(C([0,1])) : f(0) \ \textrm{is diagonal}\}) \oplus \fC_{\T_2} \oplus \fC_{M_2(\T_2)}.
\]
As well,
\[
C^*_e(\T_2 \oplus \fC_{\T_2} \oplus \fC_{M_2(\T_2)}) \simeq M_2 \oplus \fC_{\T_2} \oplus \fC_{M_2(\T_2)}
\]
and
\[
C^*_e(M_2(\T_2) \oplus \fC_{\T_2} \oplus \fC_{M_2(\T_2)}) \simeq M_4 \oplus \fC_{\T_2} \oplus \fC_{M_2(\T_2)}.
\]
Thus, the canonical quotient maps from the max covers to the min covers, point evaluation at 1, have kernels
\[
\{f\in M_2(C([0,1)) : f(0) \ \textrm{is diagonal}\} \ \ \textrm{and} \ \ M_2(\{f\in M_2(C([0,1])) : f(0) \ \textrm{is diagonal}\}).
\]
These ideals are not $*$-isomorphic since the latter has a $*$-homomorphism onto $M_4$, e.g. point evaluation at 1/2, while the former does not.
Therefore, $\T_2 \oplus \fC_{\T_2} \oplus \fC_{M_2(\T_2)}$ and $M_2(\T_2) \oplus \fC_{\T_2} \oplus \fC_{M_2(\T_2)}$ are lattice $*$-isomorphic but they are not lattice intertwined by Proposition \ref{prop:intertwining}.
\end{proof}

\section{Properties preserved (or not) by C*-lattice equivalences}

\subsection{Residually finite-dimensional}

An operator algebra is {\bf residually finite-dimensional (RFD)} if it is completely isometrically isomorphic to a subalgebra of a (possibly infinite) product of matrix algebras \cite{CR}. 
In other words, an operator algebra is RFD if and only if it has an RFD C$^*$-cover. Thus, RFD is preserved by C$^*$-cover equivalence, and so by lattice $*$-isomorphism, lattice intertwining, and completely isometric isomorphism, but not by lattice isomorphism (e.g. $\mathbb C$ and $\K$ are lattice isomorphic).

Given an RFD operator algebra, the subset of C$^*$-covers that are RFD is an object of some interest. It has been shown that there are examples of an RFD operator algebra where the C$^*$-envelope need not be RFD \cite[Example 3]{CR}, even when the operator algebra is finite-dimensional \cite{Hamidi}, and recently by Hartz that the maximal C$^*$-cover need not be RFD either \cite{Hartz}. However, \cite{Thompson} shows that there is always a maximal RFD C$^*$-cover since the RFD property is preserved by direct sums. See also \cite{CDO} for more on when the maximal C*-cover is RFD.

\begin{proposition}
Every C$^*$-cover of $\T_2$ is RFD.
\end{proposition}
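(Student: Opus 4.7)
The plan is to exploit the explicit description of $\Cmax(\T_2)$ recorded in the proof of Theorem \ref{thm:allrelationsdifferent}, where it is noted that
\[
\Cmax(\T_2) \cong \{f \in M_2(C([0,1])) : f(0) \text{ is diagonal}\}.
\]
My first step is to observe that this C$^*$-algebra is \emph{$2$-subhomogeneous}: its irreducible representations are point evaluations $\mathrm{ev}_t$ for $t \in [0,1]$, which are $2$-dimensional (mapping onto $M_2$) when $t \in (0,1]$, and split into two characters when $t = 0$ because $f(0)$ is constrained to lie in the abelian subalgebra of diagonal matrices. In particular, every irreducible representation of $\Cmax(\T_2)$ has dimension at most $2$.

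Next I would use that every C$^*$-cover $(\fC,\iota)$ of $\T_2$ arises as a quotient of $\Cmax(\T_2)$ by the unique canonical morphism of C$^*$-covers. The relevant general fact is that a quotient of an $n$-subhomogeneous C$^*$-algebra is again $n$-subhomogeneous: any irreducible representation of the quotient pulls back to an irreducible representation of the larger algebra vanishing on the kernel ideal, and so has dimension at most $n$. Applying this with $n = 2$ shows that $\fC$ is $2$-subhomogeneous.

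Finally, any subhomogeneous C$^*$-algebra is automatically RFD. Indeed, irreducible representations always separate points of a C$^*$-algebra, and when all such representations have dimension bounded by some fixed $n$, the direct sum of a separating family of irreducible representations realizes $\fC$ as a C$^*$-subalgebra of a product of matrix algebras of size at most $n$. This directly exhibits $\fC$ as RFD. The whole argument is essentially structural once the description of $\Cmax(\T_2)$ is in hand, so there is no serious obstacle beyond noting that subhomogeneity descends to quotients.
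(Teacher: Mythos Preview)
Your proof is correct and follows essentially the same approach as the paper: both start from the explicit description of $\Cmax(\T_2)$ as a subalgebra of $M_2(C([0,1]))$, pass to quotients to reach an arbitrary C$^*$-cover, and conclude RFD from the resulting finite-dimensional representation theory. The paper phrases the conclusion by saying each quotient embeds as a subalgebra of $C(X)\otimes M_2$ for some compact $X\subseteq [0,1]$, whereas you phrase it via $2$-subhomogeneity; these are equivalent ways of packaging the same structural observation.
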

\begin{proof}
As we have seen, 
\[
\Cmax(\T_2) \simeq \{f \in C([0,1]) \otimes M_2 : f(0) \ \textrm{is diagonal}\}.
\]
This implies that for every C$^*$-cover $(\fC,\iota)$ there exists a compact set $X\subseteq[0,1]$ such that $\fC$ is $*$-isomorphic to a subalgebra of $C(X)\otimes M_2$.
Therefore, every C$^*$-cover is RFD.
\end{proof}

We do not know if the subset of RFD C$^*$-covers forms a complete sublattice as in the previous proposition. Certainly it forms a complete upper semilattice since the join of RFD C$^*$-covers (direct sum) is RFD. However, it is not known whether there is always a minimal RFD C$^*$-cover.

In summary:
\begin{proposition}
Let $\A$ and $\B$ be operator algebras.
\begin{itemize}
\item[(i)] If $\A$ and $\B$ are C$^*$-cover equivalent, then $\A$ is RFD if and only if $\B$ is RFD. 
\item[(ii)] If $\A$ and $\B$ are lattice $*$-isomorphic, then the subsets of RFD C$^*$-covers of $\A$ and $\B$ are in bijective correspondence. 
\item[(iii)] If $\A$ and $\B$ are lattice intertwined, then the complete upper semilattices of RFD C$^*$-covers of $\A$ and $\B$ are intertwined.
\end{itemize}
\end{proposition}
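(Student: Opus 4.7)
The plan is to reduce each of the three assertions to properties of individual $\ast$-isomorphisms, using the fact that residual finite-dimensionality is a $\ast$-isomorphism invariant of C$^*$-algebras and that a direct sum of RFD C$^*$-algebras is RFD. The key observation is the reformulation stated earlier: an operator algebra $\A$ is RFD if and only if it admits some RFD C$^*$-cover, because any completely isometric embedding of $\A$ into a product of matrix algebras generates an RFD C$^*$-subalgebra, which is a C$^*$-cover of $\A$.

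For part (i), I would argue as follows. Suppose $\A$ is RFD and fix an RFD C$^*$-cover $(\fC,\iota)$ of $\A$. By C$^*$-cover equivalence, there exists a C$^*$-cover $(\fD,\eta)$ of $\B$ with $\fD \cong \fC$ as C$^*$-algebras. Since the RFD property is preserved by $\ast$-isomorphism, $\fD$ is RFD, so $\B$ is RFD. The reverse direction is symmetric.

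For part (ii), let $F:\cstarlattice(\A)\to\cstarlattice(\B)$ be a lattice $\ast$-isomorphism. By the definition, for every C$^*$-cover $(\fC,\iota)$ of $\A$, writing $F([\fC,\iota])=[\fD,\eta]$ gives $\fD\cong\fC$ as C$^*$-algebras, so $\fC$ is RFD if and only if $\fD$ is RFD. Hence $F$ restricts to an order bijection between the subsets of RFD classes on either side; that is the claimed bijective correspondence.

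For part (iii), suppose $\A$ and $\B$ are lattice intertwined via $F$. I would first note that lattice intertwining implies lattice $\ast$-isomorphism, so by (ii) the restriction $F|_{\text{RFD}}$ is a bijection between the collections of RFD C$^*$-covers of $\A$ and $\B$. These collections form complete upper semilattices: they are closed under arbitrary joins since the join from Proposition~\ref{prop:cstarlattice} is $C^\ast((\oplus_\lambda\iota_\lambda)(\A))$, which is a C$^*$-subalgebra of $\prod_\lambda \fB_\lambda$ and hence RFD whenever each $\fB_\lambda$ is. The intertwining commutative squares from Definition~\ref{def:sameness}(3) restrict without change to pairs of RFD C$^*$-covers, giving the same diagrammatic intertwining between the RFD semilattices. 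The only mild point to verify is compatibility with joins, which follows directly from the order-preservation already supplied by $F$, so no genuine obstacle arises beyond unpacking the definitions.
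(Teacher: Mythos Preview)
Your proposal is correct and follows essentially the same approach as the paper: both use that RFD is a $\ast$-isomorphism invariant to handle (i) and (ii), and then for (iii) invoke that lattice intertwining refines lattice $\ast$-isomorphism so the intertwining squares restrict to the RFD sublattices. Your version is slightly more explicit in (iii), re-deriving in the proof that RFD covers are closed under joins, whereas the paper establishes this fact in the discussion preceding the proposition and then simply appeals to Definition~\ref{def:sameness}.
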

\begin{proof}
For (i), if $\A$ is RFD it must have a C$^*$-cover that is RFD as well. Thus, by the C$^*$-cover equivalence with $\B$ we see that $\B$ must then have an RFD C$^*$-cover and hence is RFD. 

For (ii), if $\A$ and $\B$ are lattice $*$-isomorphic, then there is an order isomorphism $F : \cstarlattice(\A) \rightarrow \cstarlattice(\B)$ such that $F([\C, \iota]) = [\D,\eta]$ implies that $\C$ and $\D$ are $*$-isomorphic. Since the property being RFD (or not) is preserved by $\ast$-isomorphisms, the map $F$ puts the RFD C$^*$-covers of $\A$ and $\B$ in bijective correspondence.

For (iii), lattice intertwined is stronger than lattice $*$-isomorphic so such an $F$ in the proof of part (ii) exists but also intertwines the C$^*$-covers according to the lattice structure, see Definition \ref{def:sameness}.
\end{proof}

\subsection{Action-admissible}

Consider a {\bf dynamical system} $(\A, G, \alpha)$, that is, $\A$ is an operator algebra, $G$ is a locally compact group, and $\alpha$ is a strongly continuous action of $G$ on $\A$ by completely isometric automorphisms. A C$^*$-cover $(\fC,\iota)$ is called {\bf $\alpha$-admissible} if there exists a strongly continuous $\alpha_\fC : G \rightarrow \Aut(\fC)$ such that $\alpha_\fC\iota = \iota\alpha$ on $\A$. This was first defined in \cite{KatRamMem} and they showed that the minimal and maximal C$^*$-covers are always $\alpha$-admissible. 

It was shown in \cite{Hamidi} that the subset of $\alpha$-admissible C$^*$-covers is a complete sublattice. This complete sublattice need not be the whole lattice as there are examples of non-$\alpha$-admissible C$^*$-covers in \cite{KatRamHN} and \cite{Hamidi}.

On the other hand, every completely isometric automorphism of $\T_n$ is a diagonal unitary inner automorphism. Thus, every C$^*$-cover is admissible for every completely isometric automorphism of $\T_n$.

\begin{example}
Consider the algebra 
\[
\A = 
\left\{\begin{pmatrix}
    a & b & c \\
    0 & a & 0 \\
    0 & 0 & a
\end{pmatrix}\;\middle|\;
a,b,c\in \bbC\right\}
\]
from Example \ref{ex:intertwinednotcii}, which is lattice intertwined with $\A^*$ but not completely isometrically isomorphic. We know that $C^*_e(\A) = M_3$ and so every completely isometric automorphism of $\A\oplus \A$ extends to a $*$-automorphism of $M_3 \oplus M_3$. Hence, every such automorphism preserving $\A\oplus \A$ is a diagonal unitary inner automorphism in each component and a $\mathbb Z_2$ action which permutes the components.

Consider now $\A\oplus \A^*$ whose completely isometric automorphisms are now only the diagonal unitary inner automorphisms in each component, because $\A$ and $\A^*$ are not completely isometrically isomorphic. However, the $\mathbb Z_2$ action on $M_3\oplus M_3$ does not preserve $\A\oplus \A^*$. Therefore, group actions may not be preserved by lattice intertwining, in the sense that the admissible covers of a given action don't correspond under the intertwining.
\end{example}

\subsection{Simplicity}

First off, it is immediate that lattice isomorphism doesn't preserve simplicity since all C$^*$-algebras are lattice isomorphic. However, saying anything about the other equivalences is not really possible since, as far as the authors are aware, no one has a good understanding of what a simple operator algebra is. What is more, simplicity is really a completely bounded isomorphism property. 

Similarities of simple C$^*$-algebras give the first examples of simple non-selfadjoint operator algebras. To give us some idea of the form of the lattice of C$^*$-covers of a simple non-selfadjoint operator algebra we turn to the simplest simple operator algebra.

\begin{proposition}\label{prop:intertwinedidempotents}
If $E_1$ and $E_2$ are non-selfadjoint idempotents then $\mathbb CE_1$ and $\mathbb C E_2$ are lattice intertwined. 
\end{proposition}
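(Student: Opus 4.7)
The plan is to verify Definition~\ref{def:sameness}(3) directly by constructing a lattice map $\Phi\colon \cstarlattice(\bbC E_1)\to \cstarlattice(\bbC E_2)$ that preserves the underlying C$^*$-algebra of each cover and rescales the embedded idempotent. The key tool is a canonical decomposition: for any non-selfadjoint idempotent $F$ of norm $c>1$ in a C$^*$-algebra $\fC$, passing to any faithful representation shows $F$ is unitarily equivalent to $\begin{pmatrix} I & X \\ 0 & 0 \end{pmatrix}$ with $\|X\|^2=c^2-1$, so $FF^*$ has spectrum contained in $\{0\}\cup [1,c^2]$. Choose a continuous function $f$ with $f(0)=0$ and $f\equiv 1$ on $[1,c^2]$; then $P := f(FF^*) \in C^*(F)$ is the range projection of $F$, and $N := F-P$ satisfies $PN=N$, $NP=0$, $N^2=0$, and $\|N\|^2=c^2-1$. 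For any $\lambda>0$, the element $F_\lambda := P+\lambda N$ is again an idempotent with $\|F_\lambda\|^2=1+\lambda^2(c^2-1)$; since $F_\lambda$ fixes the range of $F$ pointwise and sends everything into that same range, its range projection is still $P$, so $C^*(F_\lambda)=C^*(P,N)=C^*(F)$ as well.

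With this machinery, set $\lambda := \sqrt{(\|E_2\|^2-1)/(\|E_1\|^2-1)}$, which is positive since $\|E_i\|>1$ for any non-selfadjoint idempotent. Given a C$^*$-cover $(\fC,\iota)$ of $\bbC E_1$, define $\iota_2\colon \bbC E_2\to \fC$ by $\iota_2(E_2) := (\iota(E_1))_\lambda$ in the notation above. Because the matrix norms on a one-dimensional algebra satisfy $\|a\otimes E\|_{M_n(\bbC E)} = \|a\|\cdot \|E\|$ for all $a\in M_n$, the operator space structure of $\bbC E$ depends only on $\|E\|$, so $\iota_2$ is completely isometric. Combined with $C^*(\iota_2(\bbC E_2))=\fC$, this makes $(\fC,\iota_2)$ a C$^*$-cover of $\bbC E_2$. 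Set $\Phi([\fC,\iota]):=[\fC,\iota_2]$. The crucial naturality is that any $\ast$-homomorphism $\pi\colon \fC\to \fD$ commutes with continuous functional calculus, and hence with the decomposition $F=P+N$, giving $\pi((\iota(E_1))_\lambda)=(\pi(\iota(E_1)))_\lambda$. This single property shows at once that $\Phi$ is well-defined on equivalence classes, order-preserving, and bijective with inverse given by the symmetric construction using $\lambda^{-1}$.

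Finally, for the intertwining square in Definition~\ref{def:sameness}(3), take $\pi_\fC := \id_\fC$ for every cover $\fC$. The naturality above ensures that the unique C$^*$-cover morphism $\fC\to \fD$ between covers of $\bbC E_1$ is literally the same underlying $\ast$-homomorphism as the unique morphism between the corresponding covers of $\bbC E_2$, so the required diagram commutes tautologically. The main obstacle is establishing the canonicity of this range-projection decomposition inside an abstract (possibly non-unital) C$^*$-algebra: one must verify the uniform spectral gap $\{0\}\cup[1,c^2]$ for $FF^*$, justify the continuous functional calculus producing $P=f(FF^*)\in C^*(F)$ via the choice $f(0)=0$, and confirm that $F_\lambda$ still generates all of $C^*(F)$ when $\lambda\neq 0$. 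All of these reduce to the elementary observation that $\|F\|>1$ for a non-selfadjoint idempotent $F$, which yields the spectral gap and ensures $N\neq 0$ so that the rescaling is invertible.
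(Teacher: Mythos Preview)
Your proof is correct and takes a genuinely different route from the paper's. The paper invokes the explicit description (cited from \cite{Grill}) of $\Cmax(\bbC E)$ as $\{f\in M_2(C([0,\sqrt{\|E\|^2-1}])):f(0)\in \bbC E_{11}\}$, rescales the interval to $[0,1]$ to see that $\Cmax(\bbC E_1)\cong\Cmax(\bbC E_2)$ compatibly with the quotient to $C^*_e(\bbC E_i)\cong M_2$, and then appeals to the criterion in Proposition~\ref{prop:intertwining}(iii). Your argument bypasses both the structure theorem for $\Cmax(\bbC E)$ and Proposition~\ref{prop:intertwining} entirely: you work cover-by-cover, using the spectral gap of $FF^*$ to extract the range projection $P$ intrinsically inside $C^*(F)$ and then rescale the nilpotent part $N=F-P$. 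The two approaches are secretly the same rescaling (the paper's change of variable $x\mapsto x/\sqrt{\|E\|^2-1}$ is exactly your $N\mapsto\lambda N$ read through the universal embedding), but yours is more self-contained and makes the naturality under $*$-homomorphisms transparent, while the paper's is shorter once the external structural result is in hand and illustrates how Proposition~\ref{prop:intertwining}(iii) reduces intertwining questions to a single commuting square.
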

\begin{proof}
It is probably folklore, but something similar can be found in \cite[Example 4.4]{Grill}, that 
\[
\Cmax(\mathbb C E) \ \simeq \ \left\{ f\in M_2\left(C\left(\left[0, \sqrt{\|E\|^2-1}\right]\right)\right) : f(0) \in \left[\begin{matrix} \mathbb C & 0 \\ 0 & 0\end{matrix}\right]\right\},
\]
where the embedding of $\mathbb C E$ is given by $E \mapsto \left[\begin{matrix}1 & x \\ 0 & 0\end{matrix}\right]$. 
Of course, $\left[0, \sqrt{\|E\|^2-1}\right]$ is homoemorphic to $[0,1]$ by $x\mapsto \frac{1}{\sqrt{\|E\|^2-1}}x$. This implies that 
\[
\Cmax(\mathbb C E) \ \simeq \ \left\{ f\in M_2\left(C\left(\left[0, 1\right]\right)\right) : f(0) \in \left[\begin{matrix} \mathbb C & 0 \\ 0 & 0\end{matrix}\right]\right\},
\]
where the embedding of $\mathbb C E$ is given by $E \mapsto \left[\begin{matrix}1 & \sqrt{\|E\|^2-1}x \\ 0 & 0\end{matrix}\right]$. 

Now, for a non-selfadjoint idempotent $E$ one has $C^*_e(\bbC E) \simeq M_2$ given by 
\[
E \mapsto \left[\begin{matrix} 1 & \sqrt{\|E\|^2-1} \\ 0 & 0 \end{matrix}\right].
\]
Hence, for two non-selfadjoint idempotents $E_1$ and $E_2$ we have
\[\begin{tikzcd}
    \Cmax(\bbC E_1) \arrow[d, "\rho_1"] \arrow[r,"\Phi"] & \Cmax(\bbC E_2) \arrow[d, "\rho_1"] \\
    C^*_e(\bbC E_1) \arrow[r,"\phi"] & C^*_e(\bbC E_2)
\end{tikzcd}\]
where $\rho_1$ is point evaluation at 1 and where $\Phi$ and $\phi$ are the natural $*$-isomorphisms running through the intermediate algebras $\left\{ f\in M_2\left(C\left(\left[0, 1\right]\right)\right) : f(0) \in \left[\begin{matrix} \mathbb C & 0 \\ 0 & 0\end{matrix}\right]\right\}$ and $M_2$, respectively.
Therefore, $\bbC E_1$ and $\bbC E_2$ are lattice intertwined by Proposition \ref{prop:intertwining}.
\end{proof}

So are there simple operator algebras not similar to C$^*$-algebras? We will spend the rest of this section answering in the affirmative.

Let $H=\ell^2$ be a separable Hilbert space with basis $\{e_1,e_2,\ldots,\}$. In $B(H\otimes \bbC^2)$, define
\[
\tilde{E}_{ij} =
E_{ij}\otimes \begin{pmatrix} 1 & j \\ 0 & 0 \end{pmatrix},
\]
where $E_{ij}$ are the standard matrix units in $B(H)$. We have the relation
\[
\tilde{E}_{ij}\tilde{E}_{k\ell} = \delta_{j,k}\tilde{E}_{i\ell},
\]
and so
\[
\A :=
\cspan\{\tilde{E}_{ij}\mid i,j\ge 1\}
\]
is an operator algebra.

Here is another description of $A$. If we let
\[
J_n :=
\begin{pmatrix}
    1 & -n \\
    0 & 1
\end{pmatrix}
\]
for $n\in\mathbb{R}$, then up to the standard unitary identifying $M_n(M_2)$ with $M_n\otimes M_2$, and the standard identification $K(H)=\overline{\bigcup_n M_n}$, we have
\[
\A =
\overline{\bigcup_n (J_1\oplus \cdots \oplus J_n)M_n(\bbC\oplus 0) (J_1\oplus \cdots\oplus J_n)^{-1}}\subseteq K(H)\otimes M_2 = K(H\otimes \bbC^2).
\]
That is, roughly speaking, $\A$ is built from the inductive limit $\overline{\bigcup_n M_n}$ by applying a sequence of similarities to each copy of $M_n$ whose norm tends to infinity. We show in Theorem \ref{thm:simple_not_cb_iso_to_C*} below that the resulting algebra $\A$ is not completely boundedly isomorphic to any C*-algebra.

\begin{lemma}\label{lem:A_approx_identity}
For all $a\in \A$, we have
\begin{itemize}
\item [(i)] $\tilde{E}_{ij}a=(E_{ij}\otimes 1_2)a$ for all $i,j\ge 1$,
\item [(ii)] $a\tilde{E}_{ii}=a(E_{ii}\otimes 1_2)$ for all $i\ge 1$, and
\item [(iii)] if $u_n=\sum_{k=1}^n \tilde{E}_{kk}$, then $u_nau_n\to a$ as $n\to\infty$.
\end{itemize}
\end{lemma}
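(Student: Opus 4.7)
The plan is to reduce each of (i)--(iii) to a direct computation on the generators $\tilde{E}_{k\ell}$ and extend by linearity and norm-continuity. Writing $M_j := \begin{pmatrix} 1 & j \\ 0 & 0 \end{pmatrix}$ so that $\tilde{E}_{ij} = E_{ij}\otimes M_j$, a direct $2\times 2$ computation gives $M_j M_\ell = M_\ell$ and $M_\ell\cdot 1_2 = 1_2\cdot M_\ell = M_\ell$; these are the only matrix identities the proof needs.

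For (i), computing on generators gives
\[
\tilde{E}_{ij}\tilde{E}_{k\ell} = \delta_{jk}E_{i\ell}\otimes (M_j M_\ell) = \delta_{jk}E_{i\ell}\otimes M_\ell = (E_{ij}\otimes 1_2)\tilde{E}_{k\ell},
\]
which extends to all $a\in\A$ by continuity of left multiplication by $\tilde{E}_{ij}$ and by $E_{ij}\otimes 1_2$. For (ii), a parallel computation gives
\[
\tilde{E}_{k\ell}\tilde{E}_{ii} = \delta_{\ell i}E_{ki}\otimes M_i \quad\text{and}\quad \tilde{E}_{k\ell}(E_{ii}\otimes 1_2) = \delta_{\ell i}E_{ki}\otimes M_\ell,
\]
and whenever $\delta_{\ell i}\ne 0$, $\ell = i$ forces $M_\ell = M_i$, so both sides agree on generators and extend to all of $\A$. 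Note that (ii) crucially requires the right factor to be a diagonal generator $\tilde{E}_{ii}$: for $\tilde{E}_{ij}$ with $i\ne j$ the Kronecker delta would not pin $M_\ell = M_j$, and the identity would fail. This is precisely why the approximate unit in (iii) uses only diagonal terms.

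For (iii), applying (i) on the left and (ii) on the right yields
\[
u_n a u_n = (P_n\otimes 1_2)\,a\,(P_n\otimes 1_2),\qquad P_n := \sum_{k=1}^n E_{kk}.
\]
The main point worth flagging is that $\|u_n\|\to\infty$ (in fact $\|u_n\|=\sqrt{1+n^2}$), so a naive estimate fails; the simplification above via (i) and (ii) is what rescues the argument, producing an expression bounded by $\|a\|$ since $\|P_n\otimes 1_2\|=1$. For any finite linear combination $b = \sum_{i,j\le N} c_{ij}\tilde{E}_{ij}$ in the dense span of the generators, a direct check shows $(P_n\otimes 1_2)\,b\,(P_n\otimes 1_2) = b$ for all $n\ge N$. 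An $\epsilon/2$-approximation of $a\in \A$ by such a $b$ then gives $\|u_n a u_n - a\|\to 0$, completing the proof.
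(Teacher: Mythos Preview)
Your proof is correct and follows essentially the same route as the paper: check (i) and (ii) on generators and extend by continuity, then use them to rewrite $u_nau_n=(P_n\otimes 1_2)\,a\,(P_n\otimes 1_2)$. The only cosmetic difference is in (iii): the paper simply observes that $P_n\otimes 1_2$ is an approximate identity for the ambient C*-algebra $K(H)\otimes M_2\supseteq\A$, whereas you verify the convergence by hand via an $\epsilon$-approximation on finite sums of generators---the same fact, just unpacked.
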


\begin{proof}
Items (i) and (ii) follow by checking on the generators $\tilde{E}_{k\ell}$, $k,\ell\ge 1$. Indeed
\begin{align*}
\tilde{E}_{ij}\tilde{E}_{k\ell} &=
\delta_{j,k}\left(E_{i,\ell}\otimes \begin{pmatrix} 1 & \ell \\ 0 & 0 \end{pmatrix}\right) =
(E_{ij}\otimes 1_2)\tilde{E}_{k\ell},
\end{align*}
and
\begin{align*}
\tilde{E}_{k\ell}\tilde{E}_{ii} = 
\delta_{\ell,i}\left(E_{k,i}\otimes \begin{pmatrix} 1 & i \\ 0 & 0 \end{pmatrix}\right) =
\delta_{\ell,i}\left(E_{k,i}\otimes \begin{pmatrix} 1 & \ell \\ 0 & 0 \end{pmatrix}\right) =
\tilde{E}_{k,\ell}(E_{ii}\otimes 1_2).
\end{align*}

Let $v_n = \sum_{k=1}^n E_{kk}\in K(H)$. Then combining (i) and (ii) shows that for $a\in \A$,
\[
u_nau_n =
(v_n\otimes 1_2)a(v_n\otimes 1_2).
\]
Since $v_n$ is an approximate identity for $K(H)$, $v_n\otimes 1_2$ is an approximate identity for $K(H)\otimes M_2\supseteq \A$, and so $u_nau_n=(v_n\otimes 1_2)a(v_n\otimes 1_2)\to a$ as $n\to\infty$.
\end{proof}

\begin{proposition}
$\A$ is a simple operator algebra.
\end{proposition}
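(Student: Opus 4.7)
My plan is to verify simplicity in the usual sense: $\A$ has no nontrivial closed two-sided ideals. The strategy exploits the fact that the generators $\tilde{E}_{ij}$ obey a matrix-unit-like multiplication table, $\tilde{E}_{ij}\tilde{E}_{k\ell}=\delta_{j,k}\tilde{E}_{i\ell}$, so that $\A$ behaves ideal-theoretically like the compact operators even though it is genuinely non-selfadjoint.

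The key observation is that for any $p,q\ge 1$,
\[
\tilde{E}_{pp}\tilde{E}_{ij}\tilde{E}_{qq}=\delta_{p,i}\delta_{j,q}\tilde{E}_{pq},
\]
so on linear combinations, $\tilde{E}_{pp}\A_0\tilde{E}_{qq}\subseteq \bbC\tilde{E}_{pq}$, where $\A_0=\spn\{\tilde{E}_{ij}\}$. Passing to norm limits, and using that $\bbC\tilde{E}_{pq}$ is one-dimensional hence closed, we get $\tilde{E}_{pp}\A\tilde{E}_{qq}\subseteq \bbC\tilde{E}_{pq}$ for all $p,q$. Now let $I\triangleleft \A$ be a nonzero closed two-sided ideal and fix $0\ne a\in I$. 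By Lemma \ref{lem:A_approx_identity}(iii), $u_nau_n\to a$, so $u_nau_n\ne 0$ for some $n$. Expanding
\[
u_nau_n=\sum_{p,q=1}^n \tilde{E}_{pp}a\tilde{E}_{qq},
\]
at least one summand must be nonzero, and by the previous paragraph it equals $c\,\tilde{E}_{pq}$ for some $c\in\bbC\setminus\{0\}$. Since $I$ is a two-sided ideal, $\tilde{E}_{pp}a\tilde{E}_{qq}\in I$, and therefore $\tilde{E}_{pq}\in I$.

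To finish, starting from any single $\tilde{E}_{pq}\in I$, the factorization $\tilde{E}_{ij}=\tilde{E}_{ip}\tilde{E}_{pq}\tilde{E}_{qj}$ places every generator in $I$, so $I\supseteq \A_0$, and closedness gives $I=\A$. There is no real obstacle here: the similarity twists $J_1\oplus\cdots\oplus J_n$ used to build $\A$ do not disturb the matrix-unit relations on the $\tilde{E}_{ij}$, and the approximate-identity statement in Lemma \ref{lem:A_approx_identity}(iii) handles the infinite-dimensional tail. The only point that warrants care is confirming that $\tilde{E}_{pp}\A\tilde{E}_{qq}$ is exactly $\bbC\tilde{E}_{pq}$ rather than something larger, which is precisely where finite-dimensionality of the compressed subspace makes the argument automatic.
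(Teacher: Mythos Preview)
Your proof is correct and follows essentially the same approach as the paper's: both use Lemma~\ref{lem:A_approx_identity}(iii) to find indices $p,q$ with $\tilde{E}_{pp}a\tilde{E}_{qq}\ne 0$, observe this compression lies in the one-dimensional space $\bbC\tilde{E}_{pq}$, and then use the matrix-unit relations to absorb every generator into the ideal. The only cosmetic difference is that the paper routes through $\tilde{E}_{11}$ specifically (via $\tilde{E}_{1i}\tilde{E}_{ii}a\tilde{E}_{jj}\tilde{E}_{j1}\in\bbC\tilde{E}_{11}$), while you work directly with the general $\tilde{E}_{pq}$.
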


\begin{proof}
Suppose that $J\subseteq \A$ is a nonzero closed ideal, so there is some $a\ne 0$ in $J$. By Lemma \ref{lem:A_approx_identity}, we must have $u_nau_n\ne 0$ for some $n$. Since $u_n=\sum_{k=1}^n \tilde{E}_{kk}$, there are some $i,j\ge 1$ with
\[
\tilde{E}_{ii}a\tilde{E}_{jj} \ne 0.
\]
Then,
\[
\tilde{E}_{1i}\tilde{E}_{ii}a\tilde{E}_{jj}\tilde{E}_{j1}
\]
is nonzero, is in $J$, and is in $\tilde{E}_{11}\A\tilde{E}_{11}= \bbC \tilde{E}_{11}$. Therefore $\tilde{E}_{11}\in J$, and it follows that $\tilde{E}_{ij}\in J$ for all $i,j\ge 1$. Since $J$ is a closed subspace, we have $J=\A$.
\end{proof}

Let $H_0:= H\otimes \bbC e_1\subseteq H\otimes \bbC^2$. Then checking on generators shows that $H_0$ is invariant for $\A$.

\begin{proposition}\label{prop:A_restricts_to_K}
The restriction map $\pi:\A\to K(H_0)$ is an injective completely contractive homomorphism with dense range. However, $\pi$ is not onto.
\end{proposition}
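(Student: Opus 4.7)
The plan has four pieces, which I would tackle in roughly the order they are listed in the statement.

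First, I would record that $H_0 = H\otimes \bbC e_1$ is invariant for $\A$ by checking on generators: the coefficient $\begin{pmatrix} 1 & j \\ 0 & 0 \end{pmatrix}$ carries $\bbC e_1$ into itself, so $\tilde{E}_{ij}$ preserves $H_0$. Restriction to an invariant subspace is automatically an algebra homomorphism, and because invariance passes to amplifications $H_0^{(n)}\subseteq (H\otimes \bbC^2)^{(n)}$, the same recipe shows that $\pi$ is completely contractive. Under the decomposition $H\otimes \bbC^2 = H_0 \oplus H_0^\perp$, the generator has the block form
\[
\tilde{E}_{ij} = \begin{pmatrix} E_{ij} & j E_{ij} \\ 0 & 0 \end{pmatrix},
\]
so upon identifying $H_0\cong H$ I get $\pi(\tilde{E}_{ij}) = E_{ij}$.

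Second, dense range is now immediate: $\pi(\A)$ contains every $E_{ij}$, and $\cspan\{E_{ij}\mid i,j\geq 1\} = K(H_0)$. This also confirms that $\pi$ lands in $K(H_0)$, not merely in $B(H_0)$.

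Third, injectivity is a one-line consequence of the previous proposition: $\ker\pi\triangleleft \A$ is a closed ideal and is proper since $\pi(\tilde{E}_{11}) = E_{11}\ne 0$, so simplicity of $\A$ forces $\ker\pi=\{0\}$.

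The main obstacle, and the only part with real content, is non-surjectivity. Here my plan is an open mapping argument. Suppose towards a contradiction that $\pi$ were onto. Then $\pi:\A\to K(H_0)$ would be a bounded linear bijection of Banach spaces, and the open mapping theorem would give a bounded inverse. However, a direct computation gives
\[
\|\tilde{E}_{ij}\| = \left\|\begin{pmatrix} 1 & j \\ 0 & 0 \end{pmatrix}\right\| = \sqrt{1+j^2},
\]
while $\|\pi(\tilde{E}_{ij})\| = \|E_{ij}\| = 1$. So $\|\pi^{-1}\|$ would dominate $\sqrt{1+j^2}$ for every $j$, a contradiction. Morally, this is the whole point of building $\A$ via similarities $J_n$ of unbounded norm: the restriction $\pi$ discards the ``extra norm'' stored in the off-diagonal block, which is exactly what prevents $\pi$ from being a topological isomorphism onto its dense image.
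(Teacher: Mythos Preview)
Your proof is correct and follows essentially the same route as the paper: invariance of $H_0$ checked on generators, $\pi(\tilde{E}_{ij})=E_{ij}$ giving dense range, and an open mapping (inverse mapping) argument for non-surjectivity using $\|\tilde{E}_{ij}\|=\sqrt{1+j^2}$ versus $\|\pi(\tilde{E}_{ij})\|=1$. The one genuine difference is injectivity: the paper argues directly via the approximate identity lemma (showing $\pi$ is injective on each finite block $\A_n$ and then using $u_nau_n\to a$), whereas you invoke the simplicity of $\A$ established in the preceding proposition to kill $\ker\pi$ in one line. Your route is shorter and perfectly legitimate since simplicity is already available and does not rely on the present proposition; the paper's direct argument has the minor advantage of being self-contained.
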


\begin{proof}
Since $H_0$ is invariant, $\pi$ is a homomorphism. Any compression map is completely contractive, so $\pi$ is a completely contractive homomorphism. Identifying $H_0\cong H$, on generators we have
\begin{equation}\label{eq:generators_mapped}
\pi(\tilde{E}_{ij})=E_{ij}.
\end{equation}
This implies that $\pi(A)$ is dense in $K(H_0)$.
For $n\ge 1$, let
\[
\A_n = \Span\{E_{ij}\mid 1\le i,j\le n\},
\]
so that $\A_n$ are closed subalgebras with $\A=\overline{\bigcup_n \A_n}$. The relation \eqref{eq:generators_mapped} shows $\pi$ is injective on each $A_n$. Suppose that $a\in \A$ satisfies $\pi(a)=0$. With
\[
u_n = \sum_{k=1}^n \tilde{E}_{kk},
\]
we have $u_nau_n\in \A_n$ and
\[
\pi(u_nau_n) = \pi(u_n)\pi(a)\pi(u_n)=0
\]
for all $n$. Since $\pi\vert_{\A_n}$ is injective, $u_nau_n=0$ for all $n$, and by Lemma \ref{lem:A_approx_identity}, $a=\lim_n u_nau_n=\lim_n0 = 0$. Therefore $\pi$ is injective.

By the inverse mapping theorem, if $\pi$ were onto, it would be bounded below. However, $\pi$ is not bounded below, for instance
\[
\|u_n\|=\|\tilde{E}_{nn}\| = \sqrt{1+n^2},
\]
while
\[
\|\pi(u_n)\|=\|E_{11}+\cdots+E_{nn}\|=1.
\]
So, $\pi$ is not onto.
\end{proof}

\begin{theorem}\label{thm:simple_not_cb_iso_to_C*}
The simple operator algebra $\A$ is not completely boundedly isomorphic to a C*-algebra.
\end{theorem}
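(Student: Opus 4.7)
The strategy is to argue by contradiction, using the injective completely contractive homomorphism $\pi:\A\to K(H_0)$ from Proposition \ref{prop:A_restricts_to_K}, which is dense in range but not surjective. The key external tool will be Haagerup's theorem that every completely bounded homomorphism from a C*-algebra into $B(H)$ is similar to a $\ast$-homomorphism.

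Suppose $\phi:\A\to \fC$ is a completely bounded isomorphism with completely bounded inverse, where $\fC$ is a C*-algebra. Since closed two-sided ideals are preserved by bicontinuous algebra isomorphisms, simplicity of $\A$ transfers to $\fC$. The composition $\psi:=\pi\circ \phi^{-1}:\fC\to K(H_0)\subseteq B(H_0)$ is a completely bounded, injective algebra homomorphism (the image equals $\pi(\A)$, which is dense in $K(H_0)$ by Proposition \ref{prop:A_restricts_to_K}), and injectivity follows since $\phi^{-1}$ is a bijection and $\pi$ is injective.

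By Haagerup's theorem applied to $\psi$, there exists an invertible $S\in B(H_0)$ such that $\rho(\cdot):=S^{-1}\psi(\cdot)S$ is a $\ast$-homomorphism $\rho:\fC\to B(H_0)$. Since $\rho$ is similar to the injective map $\psi$ on the simple C*-algebra $\fC$, $\rho$ is nonzero, hence injective, hence isometric, and $\rho(\fC)$ is a norm-closed C*-subalgebra of $B(H_0)$. Because $K(H_0)$ is a two-sided ideal in $B(H_0)$, conjugation by $S$ restricts to a homeomorphism of $K(H_0)$ onto itself, so $\rho(\fC)=S^{-1}\psi(\fC)S\subseteq K(H_0)$ and is dense in $K(H_0)$. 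Being closed, $\rho(\fC)=K(H_0)$.

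Consequently $\psi=S\rho(\cdot)S^{-1}$ maps onto $SK(H_0)S^{-1}=K(H_0)$, and then $\pi=\psi\circ\phi$ is surjective onto $K(H_0)$, contradicting the second assertion of Proposition \ref{prop:A_restricts_to_K}. The main obstacle is the invocation of Haagerup's similarity theorem; once that is available, the remainder is bookkeeping verifying that similarity by an invertible element of $B(H_0)$ preserves each of the properties we need (membership in the compacts, density, and surjectivity).
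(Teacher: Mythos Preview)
Your proof is correct and follows essentially the same route as the paper: compose $\pi$ with the putative completely bounded isomorphism to obtain a completely bounded injective homomorphism from a C*-algebra into $B(H_0)$, invoke Haagerup's similarity theorem, and use that $\ast$-homomorphisms have closed range to contradict Proposition~\ref{prop:A_restricts_to_K}. The only cosmetic differences are that the paper derives the contradiction directly from ``closed range'' rather than tracking density all the way to surjectivity, and that the paper explicitly notes the nondegeneracy hypothesis needed for Haagerup's theorem (which in your setup follows from the density of $\psi(\fC)$ in $K(H_0)$); you might state this explicitly.
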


\begin{proof}
Suppose for the sake of contradiction that there is a C*-algebra $\B$ and a completely bounded algebra isomorphism $\rho:\B\to \A$. By Proposition \ref{prop:A_restricts_to_K} there is a completely contractive injective homomorphism $\pi:\A\to K(H_0)$ with dense but not closed range. Then $\sigma:=\pi\rho:\B\to K(H_0)$ is a nondegenerate injective completely bounded homomorphism between C*-algebras with non-closed range.

By \cite[Theorem 1.10]{Haagerup}, any nondegenerate completely bounded homomorphism of C*-algebras is similar to a $\ast$-homomorphism. So, there is an invertible operator $S\in B(H)$ such that
\[
b\mapsto S\sigma(b) S^{-1}
\]
is a $\ast$-homomorphism. Since a $\ast$-homomorphism has closed range, $S\sigma(\B)S^{-1}$ is closed. But then
$
\sigma(B)=
S^{-1}(S\sigma(\B)S^{-1})S
$
is also closed, because $\ad_{S^{-1}}$ is a homeomorphism, and this contradicts the conclusion of the first paragraph above.
\end{proof}

In particular, $\A$ cannot be similar to a C*-algebra in any completely isometric representation, since a completely isometric isomorphism and a similarity compose to a completely bounded isomorphism.

\section*{Acknowledgements}
The authors would like to thank Terry Loring and Narutaka Ozawa for their helpful comments on Mathoverflow, to Matt Kennedy for pointing out the paper \cite{KirWas}, and Elias Katsoulis for asking whether all simple operator algebras are similar to C*-algebras. The second author was supported by the NSERC Discovery grant 2019-05430. The first author thanks Adi Tcaciuc, Nicolae Strungaru, and the second author for their support as a Postdoctoral Fellow.

\end{document}